\newcommand{\R}{\mathbb{R}}
\newcommand{\ep}{\varepsilon}
\newcommand{\p}{\partial}
\newcommand{\be}{\begin{equation}}
\newcommand{\ee}{\end{equation}}
\newcommand{\ba}{\begin{aligned}}
	\newcommand{\ea}{\end{aligned}}
\newcommand{\N}{\mathbb N}
\newcommand{\cLep}{\mathcal L_\epsilon}
\newcommand{\fu}{\mathfrak{u}}
\newcommand{\fv}{\mathfrak{v}}
\newcommand{\fp}{\mathfrak{p}}
\newcommand{\cX}{\mathcal X}
\newcommand{\ue}{u_\ep}
\newcommand{\ve}{v_\ep}
\newcommand{\we}{w_\ep}
\newcommand{\fvapp}{\fv_{\text{app}}}
\begin{document}
	\bibliographystyle{acm}
 \title{Existence and stability of partially congested propagation fronts in a one-dimensional Navier-Stokes model\thanks{Received date, and accepted date (The correct dates will be entered by the editor).}}


\author{Anne-Laure Dalibard\thanks{Sorbonne Universit\'e, Universit\'e Paris-Diderot SPC, CNRS,  Laboratoire Jacques-Louis Lions, LJLL, F-75005 Paris (dalibard@ann.jussieu.fr)} 
	\and Charlotte Perrin\thanks{Aix Marseille Universit\'e, CNRS, Centrale Marseille, I2M, Marseille, France; (charlotte.perrin@univ-amu.fr).}}

         \pagestyle{myheadings} \markboth{PARTIALLY CONGESTED PROPAGATION FRONTS }{ANNE-LAURE DALIBARD AND CHARLOTTE PERRIN}
         	
         	 \maketitle

          \begin{abstract}
               In this paper, we analyze the behavior of viscous shock profiles  of one-dimensional compressible Navier-Stokes equations with a singular pressure law which encodes the effects of congestion. As the intensity of the singular pressure tends to 0, we show the convergence of these profiles towards free-congested traveling front solutions of a two-phase compressible-incompressible Navier-Stokes system and we provide a refined description of the profiles in the vicinity of the transition between the free domain and the congested domain. In the second part of the paper, we prove that the profiles are asymptotically nonlinearly stable under  small perturbations with zero integral, and we quantify the size of the admissible perturbations in terms of the intensity of the singular pressure.
          \end{abstract}
\begin{keywords}  Compressible Navier-Stokes equations, singular limit, free boundary problem, viscous shock waves, nonlinear stability. 
\end{keywords}

 \begin{AMS} 35Q35, 35L67.
\end{AMS}

\section{Introduction}

This paper is concerned with the analysis of viscous shock waves for the following compressible Navier-Stokes system written in Lagrangian mass coordinates $(t,x) \in \R_+ \times \R$ 
(we refer to \cite[Section 1.2]{serre1999}  for details concerning the passage from Eulerian coordinates to Lagrangian mass coordinates)
\begin{subnumcases}{\label{eq:NS-ep}}
\partial_t v - \partial_x u = 0, \\
\partial_t u + \partial_x p_\ep(v) - \mu \partial_x\left(\dfrac{1}{v}\partial_x u \right) = 0,
\end{subnumcases}
where $v$ is the specific volume (the inverse of the density), $u$ is the velocity, $\mu$ is a viscosity coefficient and $p_\ep$ is the pressure.
This latter is assumed to be singular close to the critical value $v^* = 1$,
\begin{equation}\label{eq:p-ep}
p_\ep(v) = \dfrac{\ep}{(v-1)^\gamma} \quad \gamma > 0,
\end{equation}
with $\ep\ll 1$.
We supplement system~\eqref{eq:NS-ep} with initial data
\[
(v,u)(0,\cdot) = (v_0,u_0)(\cdot),
\]
and far-field condition
\begin{equation}\label{far-field}
(v,u)(t,x) \underset{x \rightarrow \pm \infty}{\longrightarrow} (v_\pm,u_\pm).
\end{equation}

\medskip
System~\eqref{eq:NS-ep} was introduced in~\cite{bresch2014} (and~\cite{degond2011} for the inviscid case $\mu =0$) in the context of congested flows, that is in the modeling of flows satisfying the maximal density constraint $\rho = \frac{1}{v} \leq 1$.
Equations~\eqref{eq:NS-ep}-\eqref{far-field} represent an approximation of the following free-congested Navier-Stokes equations
\begin{subnumcases}{\label{eq:NS-0}}
\partial_t v - \partial_x u = 0, \\
\partial_t u + \partial_x p - \mu \partial_x\left(\dfrac{1}{v}\partial_x u \right) = 0, \\
v \geq 1, \quad (v-1) p = 0, \quad p \geq 0,
\end{subnumcases}
with the far-field condition
\[
(v,u,p)(t,x) \underset{x \rightarrow \pm \infty}{\longrightarrow} (v_\pm,u_\pm,p_\pm).
\]
System \eqref{eq:NS-0} consists of a free boundary problem between a free phase $\{v > 1\}$ satisfying compressible pressureless dynamics, and a congested incompressible phase $\{v=1\}$.
The pressure $p$ which is activated in the congested domain can be seen as the Lagrange multiplier associated with the incompressibility constraint $\partial_x u = 0$ satisfied in the congested domain.
Precisely, the study~\cite{bresch2014} (extended to the multi-dimensional case in~\cite{perrin2015}) shows that from a sequence of global strong solutions $(v_\ep,u_\ep, p_\ep(v_\ep))_\ep$ to~\eqref{eq:NS-ep} (cast on $\R_+ \times (0,M)$), one can extract a subsequence 
converging weakly  as $\ep \rightarrow 0$ to a global weak solution $(v,u,p)$ of~\eqref{eq:NS-0}.
Note that this convergence result does not imply the existence of solutions which couple effectively both compressible and incompressible dynamics.
In other words, it is not excluded that the solutions of \eqref{eq:NS-0} obtained as limits of those of \eqref{eq:NS-ep} all satisfy $p\equiv 0$ or $v\equiv 1$.
Note also, that the present problem is quite different from ``classical'' free boundary problems between two immiscible compressible and incompressible phases studied for instance in  \cite{denisova2018,shibata2016,colombo2016}.
Indeed, the interface between the compressible and the incompressible domains for the congestion problem is not closed since there are mass exchanges between the free and the congested phases. 
This considerably complicates the analysis of the equations.
To the knowledge of the authors, nothing seems to be known concerning the local well-posedness of the general free-congested Navier-Stokes equations \eqref{eq:NS-0}, except the recent results of Lannes et al. \cite{iguchi2018, bresch2019} concerning the one-dimensional floating body problem which can be viewed as a particular inviscid congestion problem.

Although the rigorous justification of singular limit $\ep \rightarrow 0$ is, to the knowledge of the authors, an open problem in the inviscid case $\mu=0$
(in the case of two immiscible fluids a similar singular limit has been studied in \cite{colombo2016,guerra2016} but, as explained before, the congestion problem in the present paper is rather different since the phases cannot be considered here as immiscible), the formal link between models~\eqref{eq:NS-ep} and~\eqref{eq:NS-0} has been used from the numerical point of view in~\cite{bresch2017,degond2011} to investigate the transition at the interface between the congested domain and the free domain. 
The study of Bresch and Renardy~\cite{bresch2017} provides numerical evidence of apparition of shocks on $v$ and $u$ at the interface when a congested domain is created in the system. 
The paper of Degond et al.~\cite{degond2011} contains an analysis of the asymptotic behavior of approximate solutions $(v_\ep,u_\ep)$ of the inviscid Riemann problem associated with the initial data $(v_\ep,u_\ep)(0,\cdot) = (v_-^\ep,u_-)\mathbf{1}_{\{x < 0\}} + (v_+,u_+)\mathbf{1}_{\{x > 0\}}$ where $v_-^\ep \rightarrow 1$ and $v_+ > v_-^\ep$ remains far from $1$.
Both studies present free-congested solutions for the compressible-incompressible Euler equations obtained from the singular compressible Euler equations~\eqref{eq:NS-ep} ($\mu=0$) via the formal limit $\ep \rightarrow 0$.
Up to our knowledge, nothing seems to be known regarding the stability of such congestion fronts.
Furthermore, no explicit free-congested solution to~\eqref{eq:NS-0} for $\mu >0$ has been exhibited so far. 

\bigskip
The goal of this paper is two-fold. On the one hand, we study the asymptotic behavior of traveling wave solutions of \eqref{eq:NS-ep} connecting an almost congested left state $v_-^\ep = 1 + \ep^{1/\gamma}$, to a non-congested right state $v_+ > 1$.
On the other hand, we prove the non-linear asymptotic stability of such profiles uniformly with respect to the parameter~$\ep$. 

\bigskip
The first result of stability of traveling waves for the standard compressible Navier-Stokes equations
\begin{subnumcases}{\label{eq:NS-classic}}
\partial_t v - \partial_x u = 0 \\
\partial_t u + \partial_x P(v) - \partial_x\left(\dfrac{\mu}{v}\partial_x u\right) = 0
\end{subnumcases}
with the pressure $P(v) = \frac{a}{v^\gamma}$, $\gamma \geq 1$ and $a> 0$, was obtained by Matsumura and Nishihara in~\cite{matsumura1985}.
Matsumura and Nishihara showed that there exists a unique (up to a shift) traveling wave $(v,u)(t,x) = (\fv,\fu)(x-st)$ connecting the two limit states $(v_\pm,u_\pm)$ at $\pm \infty$, provided that $0 < v_- < v_+$ and $u_+ < u_-$ where $v_\pm,u_\pm$ are related to the shock speed $s$ through the Rankine-Hugoniot conditions (see~\eqref{eq:TW-1} below). 
Under some restriction on the amplitude of the shock $|p(v_+)-p(v_-)|\leq C(v_-,\gamma)$, they established next the asymptotic stability of $(\fv,\fu)$ with respect to small initial perturbations $(v_0-\fv,u_0-\fu) \in H^1(\R) \cap L^1(\R)$ with zero integral, {\it i.e.} perturbations for which there exists $(V_0,U_0)\in H^2(\R)$ with
\[
v_0 - \fv = \partial_x V_0\in L^1_0(\R), \quad u_0 - \fu = \partial_x U_0\in L^1_0(\R).
\] 
The restriction on the amplitude of the shock amounts to assuming that $(\gamma-1)\times$(total variation of the initial data) is small. In particular for $\gamma = 1$ there is no restriction on the amplitude of the shock.
The result is achieved by means of suitable weighted energy estimates on the integrated quantities $V$ and $U$.

Later on, several works generalized this result by considering non-zero mass perturbations and shocks with larger amplitude~\cite{mascia2004,liu2009,humpherys2010}.
Besides, the numerical study carried out in~\cite{humpherys2010} seems to indicate that the profiles should be stable independently of the shock amplitude.
In the case of viscosities depending in a non-linear manner on $1/v$, {\it i.e.} $\mu(v) = \mu v^{-(\alpha +1)}$, Matsumura and Wang~\cite{matsumura2010} managed to adapt the weighted energy method for suitable parameters $\alpha$. 
Without any smallness assumption on the amplitude of the shock, they proved the non-linear asymptotic stability for perturbations with zero mass provided that $\alpha \geq \frac{1}{2}(\gamma-1)$.\\
The constraint on the parameter $\alpha$ was finally removed in the recent paper of Vasseur and Yao~\cite{vasseur2016}.
The originality of their method consists in rewriting the system~\eqref{eq:NS-classic} with the new velocity (also called \emph{effective velocity}) $w = u - \frac{\mu}{\alpha} \partial_x v^{-\alpha}$ if $\alpha \neq 0$ and $w = u - \mu\partial_x \ln v$ if $\alpha =0$:
\begin{subnumcases}{\label{eq:NS-vw-classic}}
\partial_t v - \partial_x w - \partial_x\left(\dfrac{\mu}{v^{\alpha +1}}\partial_x v\right) = 0, \\
\partial_t w + \partial_x P(v)  = 0,
\end{subnumcases}
where the specific volume $v$ satisfies now a parabolic equation. 
The regularization effect on $v$ induced by this change of unknown was previously identified by Shelukhin~\cite{shelukhin1984} in the case $\alpha = 0$ and by Bresch, Desjardins~\cite{bresch2003,bresch2006}, Mellet, Vasseur \cite{mellet2008}, Haspot~\cite{haspot2014,haspot2018} for more general viscosity laws. 
It enables the derivation of an entropy estimate (also called \emph{BD entropy estimate}) in addition to the classical energy estimate.
In the non-linear stability study of Vasseur and Yao, the introduction of the effective velocity helps for the treatment of the non-linear terms (see $F$ and $G$ in~\eqref{eq:NL-VW-lin} below) and consequently it allows to consider any coefficient $\alpha \in \R$ which was not the case in~\cite{matsumura2010}.\\
We show in this paper that the new formulation in $(v,w)$ turns out to be also interesting when considering singular pressure laws like~\eqref{eq:p-ep}.
Although our study is restricted to linear viscosity coefficients ($\alpha=0$), which corresponds to the case initially treated in~\cite{bresch2014}, 
we could a priori extend our result to viscosities $\dfrac{\mu}{v^{\alpha +1}}$ like in~\cite{vasseur2016} without any substantial difficulty.

\subsection*{Main results}

Our first result concerns the existence and qualitative asymptotic behavior of solutions of \eqref{eq:NS-ep}-\eqref{far-field}:

\medskip
\begin{prop}[Description of partially congested profiles] 
	Assume that the pressure law is given by \eqref{eq:p-ep}.
	\begin{enumerate}
		\item Let $1<v_-<v_+$, and let $u_+, u_-$ such that 
		\[
		(u_+-u_-)^2 =- (v_+-v_-)(p_\ep (v_+) - p_\ep(v_-)).
		\]
		Then there exists a unique (up to a shift) traveling front solution of \eqref{eq:NS-ep}-\eqref{far-field} $(u,v)(t,x)=(\fu_\ep,\fv_\ep)(x-s_\ep t)$. The shock speed $s_\ep$ satisfies the Rankine-Hugoniot condition
		\[
		s_\ep^2=-\frac{p_\ep(v_+)-p_\ep(v_-)}{v_+-v_-}.
		\]
		
		\item Take $v_-=1+ \ep^{1/\gamma}$, $v_+>1$ (independent of $\ep$). Let 
		\[
		r:=\frac{v_+}{\mu\sqrt{v_+-1}}
		\]
		and define the partially congested profile $(\bar{\fu}, \bar \fv)$ such that
		\[
		\bar \fv(\xi):=\begin{cases}
		1 &\text{ if }\xi<0\\
		\dfrac{v_+}{1+(v_+-1)e^{-r\xi}} &\text{ if }\xi\geq 0
		\end{cases},
		\qquad \bar \fu' = - \dfrac{u_- - u_+}{v_+-1} \ \bar\fv', 
		\]
		which is solution to the limit system \eqref{eq:NS-0}.
		
		Then 
		\be\label{limit-profile}
		\lim_{\ep\to 0} \sup_{\xi \in \R} \inf_{C\in \R} | \fv_\ep(\xi+C) - \bar \fv(\xi)|=0.
		\ee
		
		\item Assume additionally that $\gamma \geq 1$ and
		fix the shift in $\fv_\ep$ by choosing $\fv_\ep(0)$ such that $\fv_\ep(0)-1\propto \ep^{\frac{1}{\gamma+1}}$. There exist constants $\bar C, \underline C, \bar \sigma, \underline \sigma$, independent of $\ep$, and a number $\xi_\ep$ such that $\lim_{\ep \to 0} \xi_\ep=0$, such that for all $\xi<\xi_\ep$,
		\begin{equation}\label{eq:control-cong}
		\underline C \ep^{1/\gamma} \exp( \underline\sigma \ep^{-1/\gamma}\xi ) \leq \fv_\ep(\xi )- v_- \leq \bar C \ep^{1/\gamma}\exp( \bar\sigma \ep^{-1/\gamma}\xi ).
		\end{equation}
	\end{enumerate}
	\label{prop:profile}
\end{prop}

\medskip
\begin{rem}
	
	\medskip
	\begin{itemize}
		\item We recall that $\fv_\ep$ is defined up to a shift. 
		Taking the infimum over the parameter $C$ in \eqref{limit-profile} amounts to fixing this shift.
		
		\medskip
		\item Note that the limit profile $\bar \fv$ is also the specific volume profile for the traveling wave solution of \eqref{eq:NS-0}.
		
		\medskip
		\item In the last item of the proposition we impose the value $\fv_\ep(0)$, which amounts to  prescribing the  shift $C$.
		Thanks to the specific scaling that we have chosen, we will see that $p_\ep(\fv_\ep)$ converges towards zero  uniformly in $[0, + \infty[$, and that $\fv_\ep \to 1$ in $]-\infty, 0]$.
		This means that in the limit the zone $\xi<0$ corresponds to the congested zone, in which $\bar \fv=1$, while the zone $\xi>0$ is the free zone.
		Fixing $\fv_\ep(0)$ also enables us to get the explicit control \eqref{eq:control-cong} of the distance between $\fv_\ep$ and the end state $v_-$ in the congested zone.
		
		\medskip
		\item The end state of the congested zone, $v_- = v_-^\ep = 1 + \ep ^{1/\gamma}$, is chosen so that $p_\ep(v_-)=1$ for all $\ep> 0$.
		Of course, any choice of sequence $(v_-^\ep)_{\ep> 0}$ such that $\lim_{\ep \to 0} p_\ep(v_-^\ep) \in \ ]0, +\infty[$ would lead to similar results.
		We refer to Remark \ref{rem:ext-tv} below for details.
	\end{itemize}
\end{rem}

\medskip

Actually, we are able to give a more refined description of the  behavior close to the transition zone $\xi=0$, and to give a quantitative error estimate. We have the following proposition, and we refer to Section 2 for more details:

\medskip
\begin{prop}\label{prop:est-transition}
	Let  $v_-=1+ \ep^{1/\gamma}$ and assume that $\gamma \geq 1$.
	We fix the shift in $\fv_\ep$ by setting $\fv_\ep(0)-1\propto\ep^{\frac{1}{\gamma +1}}$.
	
	\begin{enumerate}
		\item For all $R>0$, there exists a constant $C_R$ such that
		\[
		\| \fv_\ep - \bar \fv\|_{L^\infty(-R,R)} \leq C_R \ep^{\frac{1}{\gamma +1}}.
		\]

		\item Let $\tilde v$ be the solution of the ODE 
		\[
		\tilde v'=(\mu \bar s)^{-1} (1- \tilde v^{-\gamma}), \quad \tilde v(0)=2,
		\]
		and let $ \xi^*<0$ be a suitable parameter such that $\xi^*=O(\ep^{\frac{1}{\gamma+1}})$. Then
		\[
		\left| \fv_\ep(\xi) - \bar \fv (\xi) -\ep^{1/\gamma} \tilde v \left(\frac{\xi-\xi^* }{\ep^{1/\gamma}}\right)\right|\leq C  \ep^{\frac{1}{\gamma +1}}|\xi|  \quad \forall \xi\in [\xi_{min}, 0].
		\]
		where the number $\xi_{min}<0$ is such that $\xi_{min}\sim - C \ep^{\frac{1}{\gamma +1}}$.
	\end{enumerate}
\end{prop}

\medskip
The proofs of  Propositions \ref{prop:profile} and \ref{prop:est-transition} rely on ODE arguments.
Combining the two equations of~\eqref{eq:NS-ep}, we find an ODE satisfied by $\fv_\ep$, for which we prove the existence and uniqueness of solutions. 
Compactness of solutions easily follows from the bounds on $\fv_\ep$, and therefore on its derivative (using the equation), and we pass to the limit in the ODE in order to find the limit equation satisfied by $\bar \fv$. 
We then use barrier functions to control the behavior of $\fv_\ep$ in the congested zone ($\xi\to -\infty$), and energy estimates (in this case, a simple Gronwall lemma) to control the error between $\fv_\ep$ and $\fvapp$ in the transition zone. 

\bigskip
The second part of this paper is devoted to the analysis of the stability of the profiles $(\ue,\ve):=(\fu_\epsilon,\fv_\ep )(x-s_\ep t)$ in the regimes where $\ep$ is very small.
To that end, we follow the overall strategy of~\cite{vasseur2016} and introduce the effective velocity $w = u - \mu\partial_x\ln v$.
Equations~\eqref{eq:NS-ep} rewrite in the new unknowns $(w,v)$
\be
\label{eq-NL-vw}
\ba
\p_t w + \p_x p_\ep(v)=0,\\
\p_t v - \p_x w - \mu \p_{xx} \ln v=0.
\ea
\ee
The profile $(\we = \ue - \mu\partial_x \ln \ve, \ve)$ is then a solution of \eqref{eq-NL-vw}. \\
The second ingredient that we need for the derivation of suitable energy estimates is the passage to the integrated quantities.
Consider an initial data $(w_0, v_0)\in ((\we)_{|t=0}, (\ve)_{|t=0}) + L^1_0\cap L^\infty(\R)^2$, where $L^1_0(\R)$ is the set of $L^1$ functions of zero mass. 
We can then introduce $(W_0,V_0)$ such that
\begin{equation}\label{eq:WV-0}
W_0(x)=\int_{-\infty}^x(w_0(z)-\we(z)) \ dz , 
\quad V_0(x)=\int_{-\infty}^x(v_0(z)-\ve(z))\ dz.
\end{equation}
Assuming that this property remains true for all time, that is $(w-\we, v-\ve)(t)\in L^1_0(\R)$ $\forall t\geq 0$, we define
\[
W(t,x)=\int_{-\infty}^x(w(t,z)-\we(t,z))\ dz,\quad V(t,x)=\int_{-\infty}^x(v(t,z)-\ve(t,z))\ dz.
\]
Then $(W,V)(t,x)\to 0$ as $|x|\to \infty$, and $(W,V)$ is a solution of the system
\be\label{eq:NL-VW}
\ba
\p_t W + p_\ep (\ve + \p_x V) - p_\ep(\ve)=0,\\
\p_t V - \p_x W - \mu \p_x \ln \frac{\ve + \p_x V}{\ve}=0, \\
(W,V)_{|t=0} = (W_0,V_0).
\ea
\ee
In the rest of the paper, we shall assume that $\ep < \ep_0$ for a constant $\ep_0$ small enough (depending only on $v_+,\mu, \gamma$).

\medskip
\begin{thm}[Existence of a global strong solution $(W,V)$]{\label{thm:WV}}
	Assume that $(W_0,V_0) \in (H^2(\R))^2$ with
	\begin{equation}\label{eq:init-1}
	\sum_{k=0}^2 \ep^{\frac{2k}{\gamma}}\int_{\R}{\left[ \dfrac{|\partial^k_x W_0|^2}{-p'_\ep(\ve)} + |\partial^k_x V_0|^2 \right] dx} \leq \delta_0^2 \ep^{\frac{5}{\gamma}}
	\end{equation}
	for some $\delta_0$ small enough, depending only on $v_+$, $\gamma$ and $\mu$.
	Then there exists a unique global solution $(W,V)$ to~\eqref{eq:NL-VW} satisfying
	\begin{align*}
	& W \in \mathcal{C}([0;+\infty);H^2(\R)), \\
	& V \in \mathcal{C}([0;+\infty);H^2(\R)) \cap L^2(\R_+; H^3(\R)).
	\end{align*}
	Moreover there exists $C > 0$ depending only on $v_+,\mu, \gamma,\delta_0,$ such that
	\begin{equation}\label{estNL}
	\sum_{k=0}^2 \ep^{\frac{2k}{\gamma}} \left[ \sup_{t \geq 0} \int_{\R}{\left( \dfrac{|\partial^k_x W|^2}{-p'_\ep(\ve)} + |\partial^k_x V|^2 \right) dx}
	+  \int_{\R_+}\int_{\R}{ \left( \partial_x \ve |\partial^k_x W|^2 + |\partial^{k+1}_x V|^2 \right) dx \ dt} \right]  
	\leq C \ep^{\frac{5}{\gamma}}.
	\end{equation}
\end{thm}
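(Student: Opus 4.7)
The approach follows the Matsumura-Nishihara/Vasseur-Yao scheme, adapted to track the $\ep$-dependence carefully. I would first establish local existence of a strong solution on a short interval by viewing $v=\ve+\p_x V$ as satisfying a quasi-linear parabolic equation (coupled to the transport-type equation for $W$), which is amenable to a standard fixed-point argument in $C_t H^2$ for $W$ and $C_t H^2\cap L^2_t H^3$ for $V$, as long as $v$ remains bounded away from the congested value $1$. Global existence then reduces to closing, by a bootstrap, uniform weighted $H^2$ a priori bounds on $(W,V)$.

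For the basic ($k=0$) estimate the plan is to multiply the first equation of \eqref{eq:NL-VW} by $W/(-p'_\ep(\ve))$ and the second by $V$, then integrate and add. The cross-term $\int(W/(-p'_\ep(\ve)))p'_\ep(\ve)\p_x V$ pairs by parts with $-\int V\p_x W$ and cancels to leading order. Because the weight $1/(-p'_\ep(\ve))$ is transported at speed $s_\ep$, its time derivative produces the dissipative contribution proportional to $\int \p_x \ve\,W^2$, which is coercive since the profile is monotone. The parabolic term $\mu\p_x\ln((\ve+\p_x V)/\ve)$ yields, after one integration by parts, a dissipation of the form $\mu\int(\p_x V)^2/(\ve(\ve+\p_x V))$, coercive as long as $v$ stays away from $1$. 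The expected outcome is
\[
\frac{d}{dt}\int\left(\frac{W^2}{-p'_\ep(\ve)}+V^2\right)+c\int\bigl(\p_x\ve\,W^2+(\p_x V)^2\bigr)\leq\text{(nonlinear remainders)}.
\]
For $k=1,2$ I would differentiate the system $k$ times, test against $\p_x^k W/(-p'_\ep(\ve))$ and $\p_x^k V$, and weight the resulting identity by $\ep^{2k/\gamma}$. The role of this weighting is to balance the singular commutator terms produced when derivatives hit $p_\ep(\ve)$: in the congested zone $\ve-1\gtrsim\ep^{1/\gamma}$ by Proposition \ref{prop:profile}(3), so $p_\ep^{(j)}(\ve)\sim \ep^{1-j/\gamma}$ and every extra derivative costs a factor $\ep^{-1/\gamma}$, exactly matched by $\ep^{2k/\gamma}$ per two derivatives.

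The Taylor remainders $p_\ep(\ve+\p_x V)-p_\ep(\ve)-p'_\ep(\ve)\p_x V$ and the analogous remainder from $\ln((\ve+\p_x V)/\ve)$ are controlled via one-dimensional Gagliardo-Nirenberg embedding. The bootstrap assumption yields $\|\p_x V\|_{L^\infty}\lesssim \|\p_x V\|_{L^2}^{1/2}\|\p_x^2 V\|_{L^2}^{1/2}\lesssim \delta_0\,\ep^{3/(2\gamma)}$, which is $o(\ep^{1/\gamma})=o(\ve-1)$; hence $\ve+\p_x V$ stays uniformly away from $1$, Taylor expansion is legitimate to the required order, and the resulting cubic terms can be absorbed into the quadratic dissipation at the cost of a factor $\delta_0$. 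The main obstacle, in my view, is precisely the bookkeeping of $\ep$-powers when a derivative lands on the weight $1/(-p'_\ep(\ve))$ or on $\ve$ itself in the transition layer where $\p_x^2\ve$ is large (of order $\ep^{-1/\gamma}\p_x\ve$ by Proposition \ref{prop:est-transition}); this is where the exponent $5/\gamma$ in \eqref{eq:init-1} appears to be tight, since it is the largest power leaving enough room to absorb every such commutator simultaneously. Once the closed weighted estimate is in hand, a standard continuation argument combining it with local existence yields the global solution with the bound \eqref{estNL}.
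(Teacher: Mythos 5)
Your proposal matches the paper's key a priori estimates in spirit: the weighted $H^2$ scheme (multiply the $k$-th differentiated system by $\p_x^k W/(-p'_\ep(\ve))$ and $\p_x^k V$, weight by $\ep^{2k/\gamma}$, use the transport of the weight to produce the dissipation $\int \p_x\ve\,|\p_x^k W|^2$, control Taylor remainders by Gagliardo--Nirenberg) is exactly what the paper does through its Lemmas on the linearized energy and the commutator $[\cLep,\p_x]$. The difference is in the outer architecture: you propose local existence via quasi-linear parabolic theory followed by a bootstrap/continuation argument, whereas the paper avoids a separate local existence step by setting up a Banach fixed point $\mathcal A^\ep:(W_1,V_1)\mapsto(W_2,V_2)$ where $(W_2,V_2)$ solves the \emph{linear} system with frozen sources $F_\ep(\p_x V_1),\,G_\ep(\p_x V_1)$, then shows $\mathcal A^\ep$ stabilizes and contracts a small ball in a weighted $L^\infty_t H^2$ space. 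Both routes lead to the same estimate \eqref{estNL}; the fixed-point version has the advantage that the stability and contraction of the ball already package the continuation argument, at the cost of an explicit uniqueness proof via the Lipschitz bound on the nonlinearity (the paper's Lemma on $F_\ep(f_1)-F_\ep(f_2)$ etc.).

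Two points you should correct. First, your arithmetic for the $L^\infty$ bound on $\p_x V$ is off: with the weighted norm $\sum_k \ep^{2k/\gamma}E_k \lesssim \delta_0^2 \ep^{5/\gamma}$, one gets $\|\p_x V\|_{L^2}\lesssim\delta_0\ep^{3/(2\gamma)}$ and $\|\p_x^2 V\|_{L^2}\lesssim\delta_0\ep^{1/(2\gamma)}$, so interpolation gives $\|\p_x V\|_{L^\infty}\lesssim\delta_0\ep^{1/\gamma}$ --- not $\delta_0\ep^{3/(2\gamma)}$. This matters for the narrative: the perturbation is \emph{not} $o(\ve-1)$ by an extra power of $\ep$; it is of the \emph{same} order $\ep^{1/\gamma}$ as $\ve-1$, and $v=\ve+\p_x V>1$ holds only because the prefactor $\delta_0$ is small. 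The exponent $5/\gamma$ in \eqref{eq:init-1} is therefore tight at this step as well, not only in absorbing commutators. Second, the cross terms $\int (W/(-p'_\ep(\ve)))\,p'_\ep(\ve)\p_x V$ and $-\int V\,\p_x W$ cancel \emph{exactly} after one integration by parts, not merely "to leading order"; the remainders all come from the time derivative of the weight and from the nonlinear sources, so there is no subleading cross contribution to track.
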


\medskip
\begin{rem}
	
	\medskip
	\begin{itemize}
		\item The weight $(-p_\ep'(\ve))^{-1}$ is of order $\ep^{1/\gamma}$ in the  congested zone (in which $\ve-1=O(\ep^{1/\gamma})$), and of order $\ep^{-1}$ in the non-congested zone (in which $\ve-1$ is bounded away from zero). 
		Hence the presence of this weight induces an additional loss of control on $W$ in the congested zone. 
		
		\medskip
		\item The control by $C\ep^{\frac{5}{\gamma}}$ with $C$ small enough in \eqref{estNL} ensures in particular the lower bound $v = \ve + \partial_x V > 1$. 
		Indeed,
		\begin{eqnarray}\label{eq:v-Linfty}
		\|\p_x V\|_{L^\infty_x} &\leq& \sqrt{2} \|\p_x V\|_{L^2_x}^{1/2}\|\p_x^2  V\|_{L^2_x}^{1/2}\\\nonumber&
		\leq& \sqrt{2}\left(C^{1/2} \ep^{\frac{5}{2\gamma}- \frac{1}{\gamma}}\right)^{1/2}\left(C^{1/2} \ep^{\frac{5}{2\gamma}- \frac{2}{\gamma}}\right)^{1/2}\\\nonumber&
		\leq&\sqrt{2} C^{1/2} \ep^{1/\gamma}. 
		\end{eqnarray}	
		Hence, if $C<1/2$, we have $\ve + \partial_x V>1$. 
		
	\end{itemize}
\end{rem}

\bigskip
Under the previous assumptions, we show the following stability result on the variable $(u,v)$.

\medskip
\begin{thm}[Nonlinear asymptotic stability of partially congested profiles]\label{thm:estimates}
	Assume that the initial data $(u_0,v_0)$ is such that
	\[
	u_0 - (u_\ep)_{t=0} \in W^{1,1}_0(\R)\cap H^1(\R), \quad
	v_0 - (v_\ep)_{t=0} \in W^{2,1}_0(\R)\cap H^2(\R),
	\]
	and the associated couple $(W_0,V_0) \in H^2\times H^3 (\R)$ satisfies~\eqref{eq:init-1}.
	Then there exists a unique global solution $(u,v)$ to~\eqref{eq:NS-ep} which satisfies
	\begin{align*}
	& u-\ue \in \mathcal{C}([0;+\infty);H^1(\R)\cap L^1_0(\R)), \\
	& v-\ve \in \mathcal{C}([0;+\infty);H^1(\R)\cap L^1_0(\R)) \cap L^2(\R_+; H^2(\R))
	\end{align*}	
	and
	\begin{equation}
	v(t,x) > 1 \quad \text{for all}~ t,x.
	\end{equation}
	More precisely, there exists $C_1 > 0$ only depending on $v_+,\mu, \gamma$ and the initial data, such that
	\[
	\| u - \ue \|_{L^\infty(\R_+; H^1(\R))} + \| v - \ve \|_{L^\infty(\R_+; H^1(\R))} + \| v - \ve \|_{L^2(\R_+; H^2(\R))} \leq C_1
	\]	
	and on any finite time interval $[0,T]$, there exists another positive constant $C_2$ depending additionally on $T$ and $\ep$, such that
	\[
	\| u- \ue \|_{L^\infty(0,T;L^1(\R))} + \| v - \ve \|_{L^\infty(0,T;L^1(\R))} \leq C_2(T,\ep).
	\]
	Finally
	\begin{equation}
	\sup_{x\in \R} \Big| \big((u,v)(t,x) - (\ue,\ve)(t,x)\big)\Big| \underset{t\rightarrow +\infty}{\longrightarrow} 0.
	\end{equation}
\end{thm}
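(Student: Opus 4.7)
The plan is to deduce Theorem~\ref{thm:estimates} from Theorem~\ref{thm:WV} by passing from the integrated effective variables $(W,V)$ back to the original unknowns $(u,v)$ solving \eqref{eq:NS-ep}. First, given $(u_0,v_0)$ as in the hypothesis, I would set $w_0 := u_0 - \mu \p_x \ln v_0$, and use the assumed integrability of $u_0 - (\ue)_{|t=0}$ and $v_0 - (\ve)_{|t=0}$ together with $v_0 > 1$ to check that $w_0 - (\we)_{|t=0} \in L^1_0 \cap H^1$, so that the primitives $W_0, V_0$ in \eqref{eq:WV-0} are well defined, lie in $H^2$, and satisfy \eqref{eq:init-1}. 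Applying Theorem~\ref{thm:WV} then yields a unique global $(W,V)$ with the bound \eqref{estNL}. I then reconstruct
\[
v := \ve + \p_x V, \qquad w := \we + \p_x W, \qquad u := w + \mu \p_x \ln v.
\]
The remark following Theorem~\ref{thm:WV} gives the pointwise bound $v > 1$, so $\ln v$ and $p_\ep(v)$ are smooth; differentiating \eqref{eq:NL-VW} once in $x$ shows that $(w,v)$ satisfies \eqref{eq-NL-vw}, and the identity $\p_x u = \p_t v$ then gives \eqref{eq:NS-ep}.

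For the functional setting, $v-\ve = \p_x V$ and $w-\we = \p_x W$ belong to $\mathcal{C}([0,\infty); H^1 \cap L^1_0)$ because $V$ and $W$ are continuous with values in $H^2$ and vanish at $\pm\infty$; the $L^1$-control propagates from the initial data via the conservation law $\p_t(v-\ve) = \p_x(u-\ue)$. For $u - \ue = (w - \we) + \mu \p_x(\ln v - \ln \ve)$, the second summand is a spatial derivative of $\mu(\ln v - \ln \ve) \in L^1 \cap H^1$ (using $v, \ve > 1$ and the $L^2$ control on $v-\ve$), so $u - \ue$ lies in the same class. Uniqueness for strong solutions of \eqref{eq:NS-ep} with $v > 1$ is standard.

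The main work is the uniform decay as $t \to \infty$. From \eqref{estNL} with $k = 0, 1$, the norm $\|\p_x V(t)\|_{L^2}^2$ lies in $L^1(\R_+) \cap L^\infty(\R_+)$ and $\|V(t)\|_{H^2}$ is uniformly bounded. My strategy is to differentiate $\|\p_x V\|_{L^2}^2$ in time, use \eqref{eq:NL-VW} to express $\p_t \p_x V$ in terms of spatial derivatives of $V$ and $W$, and bound the resulting products by Cauchy--Schwarz against norms already controlled by \eqref{estNL}, thereby obtaining $\frac{d}{dt}\|\p_x V\|_{L^2}^2 \in L^1(\R_+)$. Combined with the time-integrability this forces $\|\p_x V(t)\|_{L^2} \to 0$, and the Gagliardo--Nirenberg inequality $\|f\|_{L^\infty}^2 \leq 2 \|f\|_{L^2} \|\p_x f\|_{L^2}$ together with the uniform $H^2$-bound on $V$ yields $\|v-\ve\|_{L^\infty} \to 0$. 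The same mechanism applied to $W$, combined with the identity $\p_x(\ln v - \ln \ve) = \p_x^2 V / v - (v-\ve)\p_x\ve/(v\ve)$ controlled in $L^\infty$ by $\|V\|_{H^3}$ and the lower bound on $v$, yields $\|u - \ue\|_{L^\infty} \to 0$.

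The main obstacle I anticipate is this last time-integrability step: the $\p_x W$ contribution is only directly controlled by \eqref{estNL} against the weight $(-p'_\ep(\ve))^{-1}$, which has degenerate behavior across the transition zone, and the nonlinear term $p_\ep(v) - p_\ep(\ve) - p'_\ep(\ve)\p_x V$ involves the singular pressure. Absorbing the latter via the smallness of $\p_x V$ in $L^\infty$ (ensured by the remark after Theorem~\ref{thm:WV}) and using the weighted $L^2$-control on $\p_x W$ together with the $H^2$-control on $V$ should suffice to close the estimate for fixed $\ep < \ep_0$, which is all that is needed for Theorem~\ref{thm:estimates}.
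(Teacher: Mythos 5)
Your overall strategy—reconstruct $(u,v)$ from the integrated effective unknowns $(W,V)$ given by Theorem~\ref{thm:WV}, then deduce decay—is the right framework, and the Gagliardo--Nirenberg endgame is the paper's. But two steps that you treat as routine are the actual content of the paper's proof, and as stated they do not close.

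First, the $L^1_0$ regularity. You assert that $v-\ve=\p_x V$ belongs to $L^1_0$ ``because $V$ is continuous with values in $H^2$ and vanishes at $\pm\infty$.'' This is false: $H^2(\R)\not\subset W^{1,1}(\R)$, and vanishing of $V$ at infinity gives the zero-mass condition only once you already know $\p_x V\in L^1$. The propagation of $L^1$ integrability from $t=0$ is a genuine PDE fact, not a Sobolev embedding. The paper proves it in two lemmas following the Haspot strategy: testing the equations for $u-\ue$ and $w-\we$ against $j_n'(\cdot)$, where $j_n(z)=\sqrt{z^2+1/n}-\sqrt{1/n}$ is a smooth convex approximation of $|z|$, exploiting the maximum-principle structure (the diffusion term has the right sign) and that $\p_x^k\ve\in L^1$, and then bootstrapping via $\p_t(v-\ve)=\p_x(u-\ue)$ using a Gronwall inequality. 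The resulting bound grows like $e^{C_\ep t}$ but is finite for all $t$, which is what is needed. Your brief allusion to ``$L^1$-control propagates via the conservation law'' gestures at the right idea but skips the argument.

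Second, and more seriously, the decay mechanism. You propose to show $\tfrac{d}{dt}\|\p_x V\|_{L^2}^2\in L^1(\R_+)$ by expressing $\p_t\p_x V$ through $\p_x^2 W$ and nonlinear terms. But \eqref{estNL} controls $\p_x^2 W$ in time only through $\sup_t\int |\p_x^2 W|^2/(-p_\ep'(\ve))$ (bounded, not integrable in $t$) and through the dissipation $\int_{\R_+}\int \p_x\ve\,|\p_x^2 W|^2$. The latter carries the weight $\p_x\ve$, which decays to $0$ exponentially as $|\xi|\to\infty$, so it does \emph{not} bound $\|\p_x^2 W\|_{L^2_{t,x}}$, even for fixed $\ep$. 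The same issue blocks your proposed decay for $W$ itself. You flag this obstacle in your last paragraph, but the suggested fix (absorbing via smallness of $\p_x V$ in $L^\infty$) does not produce the missing time-integrability. The paper supplies the missing ingredient by deriving a \emph{separate parabolic energy estimate for $u-\ue$}: starting from
\[
\p_t(u-\ue)-\mu\p_x\Big(\tfrac{1}{v}\p_x(u-\ue)\Big)=-\p_x\big(p_\ep(v)-p_\ep(\ve)\big)+\mu\p_x\Big(\big(\tfrac1v-\tfrac1\ve\big)\p_x\ue\Big),
\]
Lemma~\ref{lem:stability-u} yields $\int_{\R_+}\|\p_x(u-\ue)\|_{H^1}^2\,dt<\infty$, which is the genuinely new dissipative bound. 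Combined with $v-\ve=\p_x V\in L^2(\R_+;H^2)$ from \eqref{estNL} and $\p_t(v-\ve)=\p_x(u-\ue)\in L^2(\R_+;H^1)$, one concludes $\|(v-\ve)(t)\|_{H^1}\to 0$; and the same lemma directly yields $\|(u-\ue)(t)\|_{L^2}\to 0$. Without this parabolic estimate on $u-\ue$, the decay cannot be extracted from the $(W,V)$ bounds alone.
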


\medskip

\begin{rem}
	Note that the theorem states that $(u-\ue)(t)$ and $(v-\ve)(t)$ are functions of $L^1_0(\R)$ which justifies a posteriori the passage to the integrated system~\eqref{eq:NL-VW}.
\end{rem}

\medskip

\begin{rem}
	If the previous theorem states the stability of the approximate profiles $(v_\ep,u_\ep)$, the stability of the limit profile $(\bar{v}, \bar{u})$ remains open. 
	Indeed, the estimates (in particular \eqref{eq:v-Linfty}) that we derive all degenerate as $\ep \rightarrow 0$ and therefore do not give any information in the limit.
\end{rem}

\medskip
The proofs of Theorem  \ref{thm:WV} and Theorem \ref{thm:estimates} rely on several ingredients. First, we derive weighted $H^2$ estimates for equations \eqref{eq:NL-VW}, using the structure of the linearized system. 
We then obtain $L^1$ bounds by a method similar to the one used by Haspot in \cite{haspot2018}. Eventually, the long-time stability of $(\ue,\ve)$  follows easily.

\bigskip

\begin{rem}
	Note that the assumption $\gamma \geq 1$ is used only in the last point of the Proposition \ref{prop:profile} and \ref{prop:est-transition}. 
	The other results still hold for $\gamma > 0$ and more generally for pressure laws defined on $\ ]1,+ \infty[ \ $ which are singular close to $v=1$ (provided that $v_-$ is well-chosen for the second point), strictly decreasing and convex on $]v_-,v_+[$.
	The convexity of the pressure law on the interval $]v_-,v_+[$ is crucial for the existence of monotone profiles $(\fv_\ep,\fu_\ep)$ joining the states $(v_-,u_-)$ and $(v_+,u_+)$ (cf. Proposition \ref{prop:profile}). 
	The monotonicity of the profiles is then an essential property for the stability results which follow ({\it cf.} Theorem \ref{thm:WV} and \ref{thm:estimates}). 
	The specific form of the pressure \eqref{eq:p-ep} (which blows up close to $1$ like a power law) is used in all the results of this paper to exhibit the small scales associated to the singular limit $\ep \to 0$.
	Nevertheless, we expect similar results for more general (strictly decreasing, convex on $]1,v_+[$, singular at $1$) pressure laws. 
	All the estimates will then depend on the specific balance between the parameter $\ep$ and the type of the singularity close to $v=1$ encoded in the pressure law. 
\end{rem}

\bigskip
The paper is organized as follows. 
Section \ref{sec:profile} is concerned with the description of partially congested solutions of \eqref{eq:NS-0} and the proof of Propositions \ref{prop:profile} and \ref{prop:est-transition}.
Sections~\ref{sec:estimates} and \ref{sec:stability} are devoted to the proof of the stability Theorems \ref{thm:WV} and \ref{thm:estimates}.
Finally, we have postponed to the last section \ref{sec:appendix} the proof of some technical lemmas.

\section{Partially congested profiles}\label{sec:profile}

This section is devoted to the proof of  Propositions \ref{prop:profile} and \ref{prop:est-transition}. 
In the first paragraph, we study the existence and properties of traveling fronts of the limit system \eqref{eq:NS-0}. 
We then investigate the asymptotic behavior of traveling fronts for the system with singular pressure \eqref{eq:NS-ep}. 
Classically, we prove that such traveling fronts solve an ODE, and we compute an asymptotic expansion for solutions of this ODE.

\subsection{Traveling fronts  of \eqref{eq:NS-0}.}
Let $v_-=1 < v_+$, $u_-> u_+$ and $(u,v,p)$ be a solution of \eqref{eq:NS-0} of the form $(\fu,\fv,\fp)(x-st)$ satisfying the far-field condition 
\[
(v,u,p)(t,x) \underset{x \rightarrow \pm \infty}{\longrightarrow} (v_\pm,u_\pm,p_\pm),
\]
with $p_\pm$ determined below.
We look for a profile $(\fu,\fv,\fp)$ whose congested zone is exactly $(-\infty, \xi^*)$ for some $\xi^*\in \R$ (we will justify this simplification in Remark \ref{rem:congestion-interval} below). 

In the free zone, i.e. in the domain $\{ \fv >1\}$, we have $\fp = 0$ and
\[
\begin{cases}
-s  \fv'(\xi) - \fu'(\xi) = 0 \\
-s  \fu'(\xi) - \mu \left(\dfrac{\fu'}{\fv}\right)'(\xi) = 0
\end{cases}
\quad \forall \ \xi > \xi^*,
\]
which by integration yields
\begin{equation}\label{eq:TW0-free}
\begin{cases}
s  \fv(\xi) + \fu(\xi) = s  v_+ + u_+\\
s \fu(\xi) + \mu\dfrac{\fu'(\xi)}{\fv(\xi)} = s  u_+ 
\end{cases}
\quad \forall \ \xi > \xi^*
\end{equation}
using the fact that $\fu' \rightarrow 0$ as $\xi \rightarrow +\infty$. As a consequence, in the free zone, $\fu$ is a solution of the logistics equation
\be\label{ODE-fu}
\fu'=\frac{1}{\mu}(u_+ - \fu)\left( s v_+ + u_+ - \fu\right).
\ee
Using the relation $-s  \fv'(\xi) = \fu'(\xi)$ and \eqref{eq:TW0-free}, we find that $\fv$ satisfies in the free zone
\[
\fv' = \frac{s }{\mu}\fv(v_+ - \fv).
\]	
Now, in the congested domain we have $\fv = 1$ and
\[
\begin{cases}
\fu'(\xi) = 0\\
-s \fu'(\xi) + \fp'(\xi) - \mu \, \fu''(\xi) = 0
\end{cases}
\quad \forall \xi < \xi^*.
\]
Since $\fu$ is constant in the congested domain, the previous equations are rewritten as
\[
\begin{cases}
\fv(\xi) = v_- = 1\\
\fu(\xi) = u_-\\
\fp(\xi) = \mathrm{cst}=: p_-
\end{cases}
\quad \forall \xi < \xi^*.
\]
We now find the value of $p_-$ by making the following requirements, which ensure that $(u,v,p)$ is a solution of \eqref{eq:NS-0} in the whole domain:
\begin{itemize}
	\item $\fu$ and $\fv$ are continuous at $\xi=\xi^*$;
	\item $ \fp -\mu \dfrac{\fu'}{\fv}$ is continuous at $\xi=\xi^*$. 
\end{itemize}
These conditions lead to the Rankine-Hugoniot condition
\begin{equation}\label{eq:s-limit}
s = \dfrac{u_- - u_+}{v_+-1} > 0,
\end{equation}
and to the initial condition $\fu((\xi^*)^+)= u_-$ for the logistics equation \eqref{ODE-fu}. We infer that
\begin{eqnarray}
p_- &=& -\mu \lim_{\xi \rightarrow (\xi^*)^+}\fu'(\xi)\nonumber\\
&=& s^2(v_+-1)\nonumber\\
&=&\dfrac{(u_- - u_+)^2}{v_+-1}.\label{eq:fp}
\end{eqnarray}

\medskip
\begin{rem}
	The expression of the pressure~\eqref{eq:fp} does not depend on the viscosity $\mu$ and is actually the same as the one obtained by Degond, Hua and Navoret~\cite{degond2011} for the free-congested Euler system ({\it cf.} Case 2 of Proposition 5 in~\cite{degond2011}).
\end{rem}

\medskip
We emphasize that in the limit system, there is no constraint between $u_-, u_+$ and $v_+$ (as long as $p_-$ is free).
Conversely, instead of imposing the far-field condition $u_-$, we could fix the pressure $\fp$ in the congested domain and deduce the corresponding $u_-$ by~\eqref{eq:fp}.

\medskip
\begin{rem}
	Let us now prove that restricting the analysis to profiles whose congested zone is of the form $(-\infty, \xi^*)$ is legitimate. By continuity, the non-congested zone $\{\fv >1\}$ is an open set, and therefore  a countable union of disjoint open intervals. Let $I\subset \R$ be one of these intervals. We argue by contradiction and assume that $I=]a,b[$ with $a,b\in \R$. Then, reasoning as above, we infer that $\fu$ satisfies a logistics equation on the interval $]a,b[$. Furthermore $\fv(a)=\fv(b)=1$ (otherwise $I$ could be extended), and thus $\fu(a)=\fu(b)$. We deduce that $\fu$ is constant on $I$, and as a consequence $\fv$ is also constant - and therefore identically equal to 1 -  on $I$: contradiction.
	Therefore $a=-\infty$ or $b=+\infty$. 
	Since $\fv(-\infty)=1$ and $\fv(+\infty)= v_+>1$, we deduce that $\{\fv >1\}= \ ]\xi^*, + \infty [$ for some $\xi^*\in \R$. 
	\label{rem:congestion-interval}
\end{rem}

\subsection{Existence and uniqueness (up to a shift) of traveling fronts.}
Assume that $(u,v)$ is a solution of \eqref{eq:NS-ep} of the form $(\fu_\ep,\fv_\ep)(x-s_\ep t)$. Plugging this expression into \eqref{eq:NS-ep}, we find
\begin{equation}\label{eq:TW-0}
\begin{cases}
-s_\ep  \fv_\ep'(\xi) - \fu_\ep'(\xi) = 0 \\
-s_\ep  \fu_\ep'(\xi) + \big(p_\ep(\fv_\ep)\big)'(\xi) - \mu \left(\dfrac{1}{\fv_\ep}\fu_\ep'\right)'(\xi) = 0
\end{cases}
\end{equation}
where $\xi := x-s_\ep t$.
We integrate the previous equations over $(\pm \infty, \xi)$ to get
\begin{subnumcases}{\label{eq:TW-1}}
s_\ep  \fv_\ep + \fu_\ep = s_\ep  v_\pm + u_\pm \label{eq:TW-1-1}\\
-s_\ep \fu_\ep + p_\ep(\fv_\ep) - \mu\dfrac{\fu_\ep'}{\fv_\ep} = -s_\ep  u_\pm + p_\ep(v_\pm) \label{eq:TW-1-2}
\end{subnumcases}
using the fact that $\fu_\ep' \rightarrow 0$ as $|\xi| \rightarrow \infty$. This leads to the condition
\[
\frac{u_+-u_-}{v_+-v_-}=- \frac{p_\ep (v_+)-p_\ep (v_-)}{u_+-u_-},
\]
and therefore $(u_+-u_-)^2=-(p_\ep (v_+)-p_\ep (v_-))/(v_+-v_-)$.
The shock speed is then
\begin{equation}
s_\ep  = \pm \sqrt{-\dfrac{p_\ep(v_+) - p_\ep(v_-)}{v_+ - v_-} }.
\end{equation}
If $s_\ep>0$ (resp. $s_\ep<0$), the traveling front is moving to the right (resp. to the left). The ODE satisfied by $\fv_\ep$ follows from
the relation $\fu_\ep' = -s\fv_\ep'$ inserted in~\eqref{eq:TW-1-2} 
\begin{align}\label{eq:ED-TW}
\fv_\ep' 
& = \dfrac{\fv_\ep}{\mu s_\ep} \big( s_\ep^2 (v_+ - \fv_\ep) + p_\ep(v_+) - p_\ep(\fv_\ep)  \big)   \\
& = \dfrac{\fv_\ep}{\mu s_\ep} \big( s_\ep^2 (v_- - \fv_\ep) + p_\ep(v_-) - p_\ep(\fv_\ep)  \big).  \nonumber
\end{align}
Now, assume that $v_- < v_+$, and let $v_0\in ]v_-, v_+[$ be arbitrary, and consider the Cauchy problem \eqref{eq:ED-TW} endowed with the initial data $\fv_\ep(0)=v_0$. It has a unique maximal solution according to the Cauchy-Lipschitz theorem. Since $\fv=v_\pm$ is a constant solution of \eqref{eq:ED-TW}, we infer that $\fv_\ep \in ]v_-, v_+[$, and therefore the solution is global. Since the function $p_\ep$ is convex, it is easily proved that
$s_\ep^2 (v_+ - v) + p_\ep(v_+) - p_\ep(v)>0$ for all $v\in ]v_-, v_+[$. Therefore $\fv_\ep$ is a monotone function. Since we require that $v_-<v_+$, this implies that $\fv_\ep$ is necessarily increasing, and consequently $s_\ep>0$. Hence $\fv_\ep:\R \mapsto ]v_-, v_+[$ is one-to-one and onto. Classically, all other solutions of \eqref{eq:ED-TW} satisfying the far-field conditions \eqref{far-field} are translations of this profile. This proves the first statement of Proposition \ref{prop:profile}.

\begin{figure}[t]
	\begin{center}
		\includegraphics[scale=0.5]{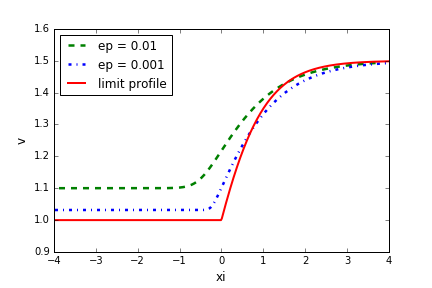}
	\end{center}
	\caption{{\label{fig:profiles}} Asymptotic behavior of the profiles $\fv_\ep$ with $v_-^\ep = 1+ \ep^{1/\gamma}$, $v_+=1.5$ and $\gamma=2$. The shift is fixed by prescribing $\fv_\ep(0) = 1 + \ep^{\frac{1}{\gamma+1}}$.
	}
\end{figure}

\subsection{Qualitative asymptotic description of traveling fronts.}\label{ssec:qualitative-cv}
In the rest of this paper, we are interested in the case when   $v_+>1$ is a fixed number, independent of $\ep$ (the zone on the right is not congested), and $\lim_{\ep\to 0} v_-^\ep=1$ (the zone on the left is asymptotically congested, see Figure~\ref{fig:profiles}). We focus on
traveling fronts such that $s_\ep\to \bar s \in ]0, + \infty[$, or equivalently $\liminf p_\ep(v_-)>0$.  It is easily checked that this implies $v_-^\ep= 1 + C_0 \ep^{1/\gamma} + o(\ep^{1/\gamma})$ for some positive constant $C_0$. This justifies our choice $v_-^\ep=1 + \ep^{1/\gamma}$ which yields $\bar{s}^2 = (v_+-1)^{-1}$. In the sequel, we will abusively write $v_-$ in place of $v_-^\ep$ in order to alleviate the notation.

\medskip

\begin{rem}
	Note that if we choose $v_-= 1 + C_0 \ep^{1/\gamma}$, we obtain a different asymptotic speed $\bar s$, namely
	\[	\bar s^2=\frac{1}{C_0^\gamma (v_+-1)}.\]
	In that case, the pressure $p_-:=\lim_{\ep \to 0} p_\ep(v_-)$ is equal to $C_0^{-\gamma}$. These relations should be compared with \eqref{eq:s-limit}, \eqref{eq:fp}.
\end{rem}

\medskip

In order to fix the shift, let us consider the solution of \eqref{eq:ED-TW} with $\fv_\ep(0)=(1+v_+)/2 \in ]v_-, v_+[$ for $\ep$ small enough. Then according to the previous paragraph, we have
\[
\fv_\ep(\xi)\in \ ]v_-, v_+[ \ \subset \ ]1, v_+[ \quad \forall \xi\in \R,\ \forall \ep>0.
\]
Thus $\fv_\ep$ is uniformly bounded in $L^\infty(\R)$, and $0\leq p_\ep(\fv_\ep) \leq 1$. Looking back at \eqref{eq:ED-TW}, we deduce that $\fv_\ep$ is uniformly bounded in $W^{1,\infty}(\R)$. Therefore, using Ascoli's theorem, we infer that there exists $\bar \fv\in W^{1,\infty}(\R)$ such that up to a subsequence
\[
\ba
\fv_\ep \rightharpoonup \bar \fv \text{ in } w^*-W^{1,\infty}(\R),\\
\fv_\ep \to \bar \fv \text{ in }\mathcal C(-R,R)\quad \forall R>0. 
\ea
\]
Furthermore, $\bar \fv$ is nondecreasing,  $\bar \fv \in [1, v_+]$, and $\bar \fv(0)=(1+v_+)/2>1$. We define
\[
\bar \xi:=\inf\{ \xi \in \R,\ \bar \fv (\xi)>1\}\in [-\infty, 0[.
\]
Since $\bar \fv (\xi)>1$ for $\xi>\bar \xi$, using the above convergence result, we deduce that $p_\ep(\fv_\ep)\to 0$ in $L^\infty_{loc} (]\bar \xi, + \infty[)$. Hence we can pass to the limit in \eqref{eq:ED-TW}, and we obtain that on $]\bar \xi, + \infty[$, $\bar \fv$ is a solution of the logistic equation
\be\label{eq:barv}
\bar \fv'= \frac{\bar s}{\mu }\bar \fv (v_+-\bar \fv).
\ee
Consequently, we have an explicit formula for $\bar \fv$, namely
\[\ba
\bar \fv (\xi)& =1\quad &\forall \ \xi\leq \bar \xi,\\
\bar \fv(\xi) & =\frac{v_+}{1+a e^{-r\xi}}\quad & \forall \ \xi >\bar \xi,
\ea\]
where $r:=\bar s v_+/\mu$ and $a$ is determined by the initial condition. Since $\bar \fv(0)=(1+v_+)/2$, we have $a=(v_+-1)/(v_++1)$. This allows us to find an explicit expression for $\bar \xi$, namely
\[
\bar \xi = - \frac{\ln (v_+ +1)}{r}.
\]
Translating the profile $\bar\fv$ by $\bar \xi$ (i.e. $\xi \mapsto \xi-\bar{\xi}$) and keeping the same notation $\bar\fv$ for the new shifted profile, we recover the expression given in Proposition \ref{prop:profile}, that is
\[\ba
\bar \fv (\xi)& = 1\quad &\forall \ \xi\leq 0,\\
\bar \fv(\xi) & =\frac{v_+}{1 + (v_+ - 1) e^{-r\xi}}\quad & \forall \ \xi >0.
\ea\]

\subsection{Control in the congested zone thanks to barrier functions.}
In this paragraph, we fix the shift in $\fv_\ep$ by choosing $\fv_\ep(0)$ such that $\fv_\ep(0)-1\propto \ep^{\frac{1}{\gamma+1}}$, which will be compatible with our ansatz in the next subsection\footnote{Actually, the results of this subsection remain true as long as $v_--1<\fv_\ep(0)-1\ll \ep^{\frac{\gamma-1}{\gamma^2 }} $.}.\\
In the domain $\xi\leq 0$, we have, since $\fv_\ep$ is a monotonous function, $v_-\leq \fv_\ep(\xi)\leq \fv_\ep(0)$. 
Define 
\[
\tilde v_\ep(\zeta):= \ep^{-1/\gamma} \left(\fv_\ep (\ep^{1/\gamma} \zeta)-1\right).
\]
Then 
\begin{equation}\label{eq:tvinfty}
\tilde v_\ep (-\infty)=1,\quad \tilde v_\ep(0)=(\fv_\ep(0)-1)\ep^{-\frac{1}{\gamma}}
\end{equation}
and $\tilde v_\ep$ satisfies the ODE
\[
\tilde v_\ep'= \frac{1 + \ep^{1/\gamma} \tilde v_\ep}{\mu s_\ep}\left(s_\ep^2\ep^{1/\gamma} (1-\tilde v_\ep) + 1 - \frac{1}{\tilde v_\ep^\gamma}\right).  
\]
Now, for $\zeta\in \R_-$, we have 
\[\ba
1 + \ep^{1/\gamma}\leq 1 + \ep^{1/\gamma} \tilde v_\ep(\zeta) &\leq \fv_\ep(0),\\
1-\tilde v_\ep (\zeta)& \leq 0.
\ea
\]
Furthermore, since the function $v\mapsto v^\gamma$ is convex ($\gamma \geq 1$), for all $v>1$, we have
\[
1- \frac{1}{v^\gamma}= \frac{v^\gamma -1}{v-1} \; \frac{v-1}{v^\gamma}\geq \gamma \frac{v-1}{v^\gamma}.
\]
Therefore, for all $\zeta\leq 0$,
\[
\left| \ep^{1/\gamma} (1-\tilde v_\ep(\zeta))\right| \leq \frac{\ep^{-\frac{\gamma-1}{\gamma}} (\fv_\ep(0)-1)^\gamma}{\gamma}\left(   1 - \frac{1}{\tilde v_\ep^\gamma(\zeta)}\right).
\]
Note that thanks to the assumption on $\mathfrak v_\ep(0)$, $\ep^{-\frac{\gamma-1}{\gamma}} (\fv_\ep(0)-1)^\gamma\ll 1$.
Gathering all the inequalities, we infer that for $\zeta<0$,
\[
\bar \rho_\ep\left(   1 - \frac{1}{\tilde v_\ep^\gamma(\zeta)}\right)	\leq \tilde v_\ep'(\zeta) \leq \underline \rho_\ep \left(   1 - \frac{1}{\tilde v_\ep^\gamma(\zeta)}\right),
\]
where
\[
\bar \rho_\ep:=\frac{1 + \ep^{1/\gamma}}{\mu s_\ep} \left(1-s_\ep^2 \frac{\ep^{-\frac{\gamma-1}{\gamma}} (\fv_\ep(0)-1)^\gamma}{\gamma}\right),\quad \underline \rho_\ep :=\frac{ \fv_\ep (0)}{\mu s_\ep} ,
\]
so that $\lim_{\ep\to 0} \bar \rho_\ep= \lim_{\ep\to 0} \underline \rho_\ep= (\mu \bar s)^{-1}$.\\
Now, consider the barrier functions $\bar v_\ep$, $\underline v_\ep$, defined as solutions of the ODEs
\[
\ba
\bar v_\ep'= \bar \rho_\ep \left(   1 - \frac{1}{ \bar v_\ep^\gamma}\right),\quad
\underline v_\ep'=\underline \rho_\ep \left(   1 - \frac{1}{	\underline v_\ep^\gamma}\right) ,\\
\bar v_\ep(0)=\underline v_\ep(0)=2.
\ea
\]
According to the Cauchy-Lipschitz theorem, these two ODEs have unique solutions on $\R$ such that $\bar v_\ep>1$, $\underline{v}_\ep>1$. Furthermore, $\bar v_\ep$, $\underline v_\ep$ are increasing on $\R$ and it is easily proved that the two functions have the following asymptotic behavior
\[\ba
\lim_{\zeta \to - \infty} \bar v_\ep(\zeta)= 	\lim_{\zeta \to - \infty} \underline v_\ep(\zeta)=1,\\
\bar v_\ep(\zeta)\sim \bar \rho_\ep \zeta, \quad \underline v_\ep(\zeta)\sim \underline \rho_\ep \zeta  \quad \text{as }\zeta\to + \infty.
\ea	\]
As a consequence, there exist $\bar \zeta_\ep$, $\underline \zeta_\ep$ such that
\[
\bar v_\ep(\bar \zeta_\ep)= \underline v_\ep(\underline \zeta_\ep)=\tilde v_\ep(0)=(\fv_\ep(0)-1)\ep^{-\frac{1}{\gamma}}.
\]
Note that $\bar \zeta_\ep \sim \underline \zeta_\ep \sim  \mu \bar s (\fv_\ep(0)-1)\ep^{-\frac{1}{\gamma}}\gg 1$ since $\fv_\ep(0)-1\gg \ep^{1/\gamma} $. We also stress that as $\ep\to 0$, $\bar v_\ep$ and $\underline v_\ep$ both converge uniformly on sets of the form $]-\infty, a]$ for all $a\in \R$ towards the solution of 
\[
v'= \frac{1}{\mu \bar s} \left( 1 - \frac{1}{v^\gamma}\right),\quad
v(0)=2.
\]

We conclude our analysis of the barrier functions by investigating more precisely their behavior as $\zeta \to - \infty$. Using once again the inequalities
\[
\gamma 2^{-\gamma}\leq \gamma v^{-\gamma}<\left( 1 - \frac{1}{v^\gamma}\right) \frac{1}{v-1}\leq \gamma v^{-1}\leq \gamma \quad \forall v\in ]1, 2],
\]
we infer that there exist constants $\bar C, \underline C,\bar \sigma, \underline \sigma$, independent of $\ep$ such that
\[\ba
\bar v_\ep(\zeta)-1\leq \bar C \exp(\bar \sigma \zeta),\quad 
\underline v_\ep(\zeta) -1 \geq \underline  C \exp(\underline \sigma \zeta)\quad \forall \zeta<0.
\ea
\]
Note furthermore that it is possible to take $\underline \sigma= \underline \rho_\ep \gamma$ because of the inequality
\[
\frac{v^\gamma-1}{v^\gamma(v-1)}\leq \gamma.
\]
Indeed, we have
\[
\ba
\underline v_\ep' \leq \underline \rho_\ep \gamma (\underline v_\ep -1),\quad
\underline v_\ep(0)=2,
\ea
\]
and therefore, for all $\zeta <0$,
\[
\left(\underline v_\ep(\zeta) -1\right) \exp(-\underline \rho_\ep \gamma \zeta)\geq 1.
\]
However, concerning $\bar v_\ep$, the control on $\bar \sigma$ is not as good, because the reverse inequality reads
\[
\frac{v^\gamma-1}{v^\gamma(v-1)}\geq\gamma v^{-\gamma}\geq \gamma 2^{-\gamma} \quad \forall v\in (1,2).
\]
Of course, as $\bar v_\ep$ converges to 1, the constant in the exponential bound improves.

Let us now go back to the bounds on $\tilde v_\ep$. We have constructed Lipschitz functions $\bar F_\ep$, $\underline F_\ep$, such that $\tilde v_\ep$, $\bar v_\ep(\cdot +\bar \zeta_\ep)$, $\underline v_\ep (\cdot +\underline \zeta_\ep)$ satisfy, for all $\zeta<0$
\[
\ba 
\bar F_\ep (\tilde v_\ep)\leq 	\tilde v_\ep'\leq \underline F_\ep (\tilde v_\ep),\\
\bar v_\ep'(\cdot +\bar \zeta_\ep)= \bar F_\ep ( \bar v_\ep(\cdot +\bar \zeta_\ep)),\quad
\underline v_\ep'(\cdot +\underline \zeta_\ep)= \underline F_\ep ( \underline v_\ep(\cdot +\bar \zeta_\ep)),\\
\tilde v_\ep (0)= \bar v_\ep (\bar \zeta_\ep)=  \underline v_\ep (\underline \zeta_\ep).
\ea
\]
Classical arguments then ensure that for all $\zeta<0$,
\[
\underline v_\ep (\zeta + \underline \zeta_\ep)	\leq \tilde v_\ep(\zeta) \leq \bar v_\ep (\zeta + \bar \zeta_\ep ).
\]
Going back to the original variables, the statement of Proposition \ref{prop:profile} follows, taking $\xi_\ep:= -\ep^{\frac{1}{\gamma}}\max (\bar \zeta_\ep, \underline \zeta_\ep)$. We recall that $\ep^{\frac{1}{\gamma}} \bar \zeta_\ep \sim \ep^{\frac{1}{\gamma}} \underline \zeta_\ep\sim \mu \bar s (\mathfrak{v_\ep} (0) -1)\ll 1$, and therefore $\lim_{\ep \to 0} \xi_\ep~=~0$.

\medskip
\begin{rem}\label{rem:ext-tv}
	The above construction can easily be generalized to the case where $\lim_{\ep \to 0} p_\ep(v_-)\in ]0, +\infty[$.
	In that case, \eqref{eq:tvinfty} must be replaced by
	\[
	\tilde v_\ep (-\infty)=\frac{v_--1}{\ep^{1/\gamma}}=:\tilde v_-,\quad \tilde v_\ep(0)=(\fv_\ep(0)-1)\ep^{-\frac{1}{\gamma}}.
	\]
	After straightforward computations, one can check that
	\[
	\bar \rho_\ep:=\frac{1 + \ep^{1/\gamma}}{\mu s_\ep} \left(1-s_\ep^2 \frac{\ep^{-\frac{\gamma-1}{\gamma }} (\fv_\ep(0)-1)^\gamma}{\gamma\tilde v_-}\right) .
	\]	
	Note that the property $\lim_{\ep\to 0} \bar \rho_\ep=(\mu \bar s)^{-1}$ remains true, so that the rest of the analysis is unchanged.	
\end{rem}

\medskip
\subsection{Finer description in the transition zone.} 
We now compute a more precise asymptotic expansion of $\fv_\ep$ in the vicinity of $0$.
Indeed, there is no explicit formula for $\fv_\ep$ and therefore our purpose is to exhibit an approximation of $\fv_\ep$ which highlights its small scale dependencies in the vicinity of the transition zone $\xi = 0$.
Our goal is two-fold: firstly, since $\fv_\ep$ has $\mathcal C^1$ (and even $\mathcal C^\infty$) regularity for all $\ep>0$, it is natural to look for a $\mathcal C^1$ approximation of $\fv_\ep$, while the derivative of $\bar \fv $ has a jump at $\xi=0$. Secondly, the convergence in Paragraph \ref{ssec:qualitative-cv} is only qualitative, whereas we wish to derive a quantitative error estimate.

\medskip
We define an approximate solution $\fvapp$ by taking the following ansatz
\be\label{Ansatz-fv}
\fvapp:= \bar \fv (\xi) +\left\{
\begin{array}{cc}
	\ep^{1/\gamma} \tilde v \left(\frac{\xi-\xi^* }{\ep^{1/\gamma}}\right)~ &\text{if} ~\xi \leq 0
	, \\
	K \ep^{\frac{1}{\gamma +1}}\chi (\xi)~&\text{if}~ \xi>0 , 
\end{array}
\right.
\ee
where $\xi^*$, $K$ are real numbers that remain to be determined, together with the corrector $\tilde{v}$, $\chi\in \mathcal C^\infty_0(\R)$ is an arbitrary cut-off function such that $\chi(0)=1$ and $\chi'(0)=-1$,
and $\bar \fv$ is the profile defined in Proposition \ref{prop:profile}.
We make the following requirements on these three unknowns ($K$, $\xi^*$ and $\tilde v$):
\begin{enumerate}
	\item $\fvapp$ must be a $\mathcal C^1$ function on $\R$;
	\item $\fvapp$ must be an approximate solution of \eqref{eq:ED-TW} (in the sense that it satisfies the equation with a small, quantifiable remainder).
\end{enumerate}
We first identify $K, \xi^*$ and $\tilde{v}$, and then prove a quantitative error estimate between $\fv_\ep$ and $\fvapp$.

\medskip
\begin{rem}
	\begin{itemize}
		\item The cut-off profile in the non-congested zone $\xi \geq 0$ is merely a technical corrector, which has no actual physical or mathematical relevance.
		
		\item One important choice in the ansatz above is that $\fvapp (0 ) -1 \propto \ep^{\frac{1}{\gamma +1}}$. This choice is justified by mainly two arguments. Firstly, this ensures that $p_\ep'(v)$ remains bounded for all $v\geq \fvapp (0)$, which will be crucial in the energy estimates. Secondly, another natural ansatz would be to choose $\fvapp(\xi)= \bar \fv (\xi + \varphi_\ep)$ in the region $\xi>0$, with  $0<\varphi_\ep\ll 1$. Keeping $\varphi_\ep$ as an unkown and writing the continuity of $\fvapp$, $\fvapp'$ at $\xi=0$ leads to $\varphi_\ep \propto \ep^{\frac{1}{\gamma +1}}$ in the case $\bar s > 1$ ({\it i.e.} $v_+ < 2$), which is compatible with the ansatz \eqref{Ansatz-fv}. However, this alternative Ansatz fails when $\bar s <1$ ({\it i.e.} $v_+ > 2$), and therefore we have chosen to work only with \eqref{Ansatz-fv}.
	\end{itemize}
\end{rem}

\medskip 
\subsubsection*{Definition of the approximate solution}

Let us first identify the corrector $\tilde v$. 
Plugging the Ansatz \eqref{Ansatz-fv} for $\xi<0$ into equation \eqref{eq:ED-TW} and identifying the main order terms leads to
\[
\tilde v'= \frac{1}{\mu \bar s} \left( 1 - \frac{1}{\tilde v^\gamma}\right).
\]
We endow this ODE with an initial condition in $]1, + \infty[$, say $\tilde v(0)=2$ (this arbitrary choice will simply modify the definition of $\xi^*$ hereafter). 
Following the same reasoning as in the previous paragraph, it is easily proved that the ODE has a unique global solution $\tilde v$,  which is  increasing on $\R$. 
Furthermore, there exists a constant $\sigma>0$ such that $\tilde v$ exhibits the following asymptotic behavior at $\pm \infty$
\be\label{asympt-tildev}
\tilde v(\zeta)=1 + O\left(\exp\left( \sigma \zeta\right)\right)\quad \text{as } \zeta\to - \infty,\quad 
\tilde v(\zeta)\sim \frac{\zeta}{\mu \bar s} \quad \text{as } \zeta \to + \infty.
\ee

Now, the parameters $\xi^*$ and $K$ are determined by requiring that $\fvapp$ is continuous at $\xi=0$, with a continuous first derivative. This leads to the system
\be\label{eq:cont}
\ba
1 + \ep^{1/\gamma}\tilde{v}\left(\dfrac{ - \xi^* }{\ep^{1/\gamma}}\right)=1 + K \ep^{\frac{1}{\gamma +1}} ,\\
\tilde{v}'\left(\dfrac{- \xi^* }{\ep^{1/\gamma}}\right)= \frac{1}{\mu \bar s}- K \ep^{\frac{1}{\gamma +1}}.
\ea 
\ee
Let us set 
\[
\omega_\ep:=\tilde{v}\left(\dfrac{ - \xi^* }{\ep^{1/\gamma}}\right).
\]
Then, using the ODEs satisfied by $\tilde v$ and $\bar \fv$, the system becomes
\[
\ba
\ep^{1/\gamma} \omega_\ep=K \ep^{\frac{1}{\gamma +1}} ,\\
\frac{1}{\omega_\ep^\gamma}=\mu \bar s K \ep^{\frac{1}{\gamma +1}}. 
\ea
\]
We therefore obtain
\be\label{eq:om}
\omega_\ep:= \frac{1}{(\mu \bar s)^{\frac{1}{\gamma +1}}} \ep^{-\frac{1}{\gamma (\gamma +1)}},\quad K:= \frac{1}{(\mu \bar s)^{\frac{1}{\gamma +1}}}.
\ee
Eventually, let us compute the asymptotic behavior of $\xi^*$. Note that $\omega_\ep\gg 1$ as $\ep \to 0$, so that $\lim_{\ep\to 0 } -\ep^{-1/\gamma} \xi^*=+\infty$. As a consequence, using \eqref{asympt-tildev}, we infer that
\[
\dfrac{- \xi^* }{\mu \bar s \ep^{1/\gamma}}\sim \omega_\ep, 
\]
and thus
\be\label{asympt-xi*}
-\xi^*   
\sim (\mu \bar s)^{\frac{\gamma}{\gamma+1}} \ep^{\frac{1}{\gamma+1}}  .
\ee

\subsubsection*{Error estimate in the non-congested and transition zones}
In the vicinity of $\xi=0$, the idea is the following: we write equation \eqref{eq:ED-TW} in the form
\[
\fv_\ep'= A_\ep (\fv_\ep),
\]
where $A_\ep (v)=(\mu s_\ep)^{-1} v (s_\ep^2(v_+-v) + p_\ep(v_+)-p_\ep(v))$, and we write $\fvapp$ as an approximate solution of \eqref{eq:ED-TW}, namely
\[
\fvapp'=A_\ep (\fvapp) + r_\ep,
\]
for some small remainder $r_\ep$. We then use the form of $A_\ep$ to estimate $\fv_\ep -\fvapp$ close to $\xi=0$ through a Gronwall-type Lemma.

Let us first compute $r_\ep$. By definition of $\bar \fv$ and $\tilde v$, we have
\[
\fvapp'=\begin{cases}
\frac{1}{\mu \bar s} (1-p_\ep (\fvapp))&\text{ if } \xi < 0,\\
\frac{\bar s}{\mu} \bar \fv (v_+-\bar \fv) + K \ep^{\frac{1}{\gamma+1}} \chi'&\text{ if } \xi \geq 0,
\end{cases}
\]
so that
\[
r_\ep=\left( \dfrac{1}{\bar s} - \dfrac{\fvapp}{s_\ep}\right) \dfrac{1-p_\ep(\fvapp)}{\mu} +(v_--\fvapp) \dfrac{\fvapp s_\ep}{\mu} \quad \text{if }\xi<0,
\]
and
\[
\ba
r_\ep=&\ - \dfrac{\fvapp}{\mu s_\ep}(p_\ep (v_+) - p_\ep (\fvapp)) - \dfrac{s_\ep-\bar s}{\mu}\bar \fv (v_+ - \bar \fv)  \\
& - \dfrac{s_\ep \chi K \ep^{\frac{1}{\gamma+1}}}{\mu}\left[v_+ - 2 \bar \fv -\chi K \ep^{\frac{1}{\gamma+1}} \right]+ K \ep^{\frac{1}{\gamma+1}} \chi'\quad \text{if }\xi>0.
\ea
\]

Now, note that $\fvapp$ is bounded in $L^\infty$, uniformly in $\ep$, and that there exists a constant $C>0$ such that
\[
\ba
|\bar s -s_\ep| \leq C \ep^{1/\gamma},\\
\forall \xi \geq 0,\quad p_\ep (\fvapp(\xi)) \leq  C \ep^{\frac{1}{\gamma +1}},\\
\forall \xi \leq 0,\quad 0\leq \fvapp(\xi) -1 \leq C\ep^{\frac{1}{\gamma +1}}.
\ea
\]
Gathering these estimates, we deduce that $\|r_\ep\|_{\infty}\leq C \ep^{\frac{1}{\gamma +1}}$.

We now perform the error estimate. Without loss of generality, we can always fix the shift in $\fv_\ep$ by requiring that $(\fv_\ep- \fvapp)(0)=0$.
We treat separately the non-congested and the transition zone. Indeed, $A_\ep$ is uniformly Lipschitz in the non-congested zone, whereas the estimates on $A_\ep'(\fvapp)$ degenerate in $(\xi^*,0)$.

\medskip

$\bullet$ Non-congested zone ($\xi\geq 0$): First, recall that $\chi$ is compactly supported. As a consequence, if $\ep$ is small enough, $\fvapp$ is strictly increasing in $(0, + \infty)$, and we recall that $\fv_\ep$ is also a monotone increasing function. Hence, in the non-congested zone, we have $\fv_\ep\geq \fvapp(0)$, $\fvapp\geq \fvapp(0)$. 
Using the computations of the previous paragraph, we infer that $|p_\ep'(v)|\leq C$ for all $v\geq  \fvapp(0)$, and thus $|A_\ep'(v)|\leq C$ for all $v\geq \fvapp(0)$. We deduce that
\begin{eqnarray*}
	\left|(\fv_\ep - \fvapp)'\right| &=&\left| A_\ep (\fv_\ep) - A_\ep (\fvapp) - r_\ep\right|\\
	&\leq & C \ep^{\frac{1}{\gamma +1}} +
	C | \fv_\ep - \fvapp| .
\end{eqnarray*}
The Gronwall lemma then implies that
\[
|\fv_\ep-\fvapp| \leq C \ep^{\frac{1}{\gamma +1}} \left[ \exp(C \xi) -1\right],
\]
which leads to a  good estimate on compact intervals.

\medskip
$\bullet$ Transition zone ($\xi \in (\xi^*, 0)$): In this zone, the situation is more complicated because the derivative of the pressure might become singular. We use a bootstrap argument together with a Gronwall-type lemma to control the error $|\fv_\ep - \fvapp|$.

First, note that as long as $\xi-\xi^*\geq M \ep^{1/\gamma} $, where $M$ is some large but fixed constant, independent of $\ep$ (say $M=100$) and $\xi^*$ is defined by \eqref{eq:cont} and satisfies \eqref{asympt-xi*}, then $\fvapp(\xi)-1\sim (\mu \bar s)^{-1} (\xi-\xi^*)$. Therefore, we introduce the following bootstrap assumptions
\be\label{hyp:bootstrap}\ba
|\fv_\ep-\fvapp| \leq (4\mu \bar s)^{-1} (\xi-\xi^*),\\
\xi-\xi^*\geq M \ep^{1/\gamma}.\ea\ee
As long as the assumptions \eqref{hyp:bootstrap} are satisfied, we have
\[ 
\fvapp -1 \geq (2\mu \bar s)^{-1} (\xi-\xi^*),\quad 
\fv_\ep -1 \geq (4\mu \bar s)^{-1} (\xi-\xi^*)
\]
and therefore there exists a constant $C$, depending only on $\mu$ and $\gamma$, such that 
\[
|p_\ep'(v)|\leq \frac{C\ep}{(\xi-\xi^*)^{\gamma+1}  }\quad \forall v\in [\fv_\ep(\xi), \fvapp(\xi)].
\]
We infer that as long as the assumptions \eqref{hyp:bootstrap} are satisfied, we have
\[
|(\fv_\ep-\fvapp)'(\xi)| \leq C\left( 1 + \frac{\ep}{(\xi-\xi^*)^{\gamma+1}  }\right) | \fv_\ep-\fvapp|(\xi) + C\ep^{\frac{1}{\gamma +1}}.
\]
Note furthermore that the assumptions \eqref{hyp:bootstrap} are satisfied at $\xi=0$, and therefore, by continuity, they are also satisfied on a small interval in the vicinity of $0$. Hence, as long as the assumptions \eqref{hyp:bootstrap} are satisfied, the Gronwall Lemma ensures that
\[
\fv_\ep(\xi) - \fvapp(\xi) \leq C \ep^{\frac{1}{\gamma +1}}\int_\xi^{0} \exp \left( C (\xi'-\xi) + \frac{C\ep}{ (\xi - \xi^*)^\gamma} -  \frac{C\ep}{ (\xi' - \xi^*)^\gamma}\right) d\xi'.
\]
A similar bound holds from below. We infer that as long as the inequalities \eqref{hyp:bootstrap} hold,
\[
|\fv_\ep(\xi) - \fvapp(\xi) | \leq C \ep^{\frac{1}{\gamma +1}} |\xi| \exp \left(\frac{C\ep}{ (\xi - \xi^*)^\gamma}\right)\leq C\exp\left(\frac{C}{M^\gamma}\right) \ep^{\frac{1}{\gamma +1}} |\xi| .
\]
Without loss of generality, we choose the constant $M$ so that $\exp\left(\frac{C}{M^\gamma}\right) \leq 2$, and we obtain
\be\label{est:transition}
|\fv_\ep(\xi) - \fvapp(\xi) |\leq C \ep^{\frac{1}{\gamma +1}} |\xi|
\ee
on the interval on which assumptions \eqref{hyp:bootstrap} are valid.
Using classical bootstrap arguments, we deduce that inequalities \eqref{hyp:bootstrap}, and therefore \eqref{est:transition}, are valid as long as $\xi$ satisfies
\[
C \ep^{\frac{1}{\gamma +1}} (- \xi)\leq \frac{1}{2\mu \bar s} (\xi-\xi^*) \quad \text{and} \quad \xi-\xi^*\geq M \ep^{1/\gamma},
\]
or equivalently
\be
\xi - \xi^* \geq \max \left( M \ep^{1/\gamma}, \frac{ -C \xi^*\ep^{\frac{1}{\gamma +1}}}{ C \ep^{\frac{1}{\gamma +1}} + (2\mu \bar s)^{-1}}\right).
\ee
Using the estimate\eqref{asympt-xi*} on $\xi^*$ of the previous paragraph and the inequality $\gamma\geq 1$, we infer that the estimate \eqref{est:transition} is valid on an interval $[\xi_{min}, 0]$, where $\xi_{min}:= \xi^* + M \ep^{1/\gamma}$.

\medskip
Note that in the interval $[\xi_{min},0]$, $\fvapp$ is a good approximation, in the sense that $\fv_\ep - \fvapp$ is  smaller than all terms appearing in $\fvapp$ (namely the main order term 1 and the corrector term of order $\ep^\frac{1}{\gamma +1}$).

For $\xi\leq \xi_{min}$, the singularity in $p_\ep'$ becomes too strong to apply the Gronwall lemma. However,  we can use the control by barrier functions from the previous paragraph to estimate $\fv-\fvapp$.

\medskip
\section{Global well-posedness of small solutions $(W,V)$ of~\eqref{eq:NL-VW}}
\label{sec:estimates}

\medskip

The goal of this section is to prove Theorem \ref{thm:WV}, that is the existence of a global strong solution $(W,V)$ to the system 
\[
\ba
\p_t W + p_\ep (\ve + \p_x V) - p_\ep(\ve)=0,\\
\p_t V - \p_x W - \mu \p_x \ln \frac{\ve + \p_x V}{\ve}=0, \\
(W,V)_{|t=0} = (W_0,V_0),
\ea
\]
where $\ve(t,x)=\fv_\ep (x-s_\ep t)$, under a smallness assumption on $(W_0,V_0)$.
As explained in the introduction, we follow the overall strategy of \cite{vasseur2016}, tracking the dependency of all estimates with respect to $\ep$. 
Of course, the main difficulty lies in the singularity of the pressure term in the congested zones.
The main ideas are the following:

\medskip
\begin{itemize}
	\item Since we are working close to a congested profile, it is natural to investigate the stability properties of the linearized system close to this congested profile.
	Therefore we rewrite the previous system as
	\be\label{eq:NL-VW-lin}
	\ba
	\p_t W + p_\ep'(\ve)\p_x V= F_\ep(\p_x V),\\
	\p_t V - \p_x W - \mu \p_x \left(\frac{\p_x V}{\ve}\right)= G_\ep(\p_x V),
	\ea
	\ee
	where
	\be \label{eq:FG}
	\ba
	F_\ep(f):=- \left[p_\ep (\ve + f) - p_\ep (\ve ) - p_\ep'(\ve ) f\right],\\
	G_\ep(f):= \mu \p_x \left[ \ln \left(1 + \frac{f}{\ve}\right) - \frac{f}{\ve} \right].
	\ea
	\ee
	Hence the main order part of the energy and of the dissipation term is the one associated with the linearized system. 
	The nonlinear part of the operator, contained in $F_\ep$ and $G_\ep$, is then treated as a perturbation, assuming that the distance between the congested profile and the actual solution remains small enough (in a way that needs to be quantified in terms of $\ep$).
	
	\medskip
	\item In order to close the estimates thanks to a fixed point argument, we need to work in a high regularity space. 
	Therefore we differentiate the equation and derive estimates on the first-order derivatives. However, the system is not stable by differentiation, and we will need to compute some commutators. 
\end{itemize}

\medskip

\subsection{Properties of the linearized system.}
As announced above, the starting point lies in the derivation of energy estimates for the linearized system. Therefore we define the linearized operator
\[
\cLep \begin{pmatrix}
W\\ V \end{pmatrix}
= \begin{pmatrix}
p_\ep'(\ve)\p_x V\\ - \p_x W - \mu \p_x \left(\frac{\p_x V}{\ve}\right) \end{pmatrix}.
\]
The cornerstone of our analysis is the following energy estimate.

\medskip
\begin{lemma}[Energy estimates for the linearized system]
	Let $T> 0$, 
	\[
	f\in L^\infty(0,T;L^2(\R)), \quad g\in L^\infty(0,T;L^2(\R))\cap L^2(0,T;H^1(\R)),
	\] 
	and $S_f, S_g\in L^1(0,T;L^2(\R))$ such that
	\begin{equation}\label{eq:generic-lin}
	\p_t \begin{pmatrix}
	f\\ g \end{pmatrix} + \cLep \begin{pmatrix}
	f\\ g \end{pmatrix} =  \begin{pmatrix}
	S_f\\ S_g \end{pmatrix}.
	\end{equation}
	Then
	\begin{align}\label{eq:energy-lin}
	\int_\R{\left[-\dfrac{1}{p'_\ep(\ve)}{|f(T)|^2} + {|g(T)|^2} \right]}
	+ s_\ep\int_0^T\int_\R{\dfrac{p''_\ep(\ve)}{(p'_\ep(\ve))^2} \partial_x \ve |f|^2}
	+ 2\mu \int_0^T\int_\R{\dfrac{(\partial_x g)^2}{\ve}} & \nonumber\\
	= \int_\R{\left[\dfrac{-1}{p'_\ep(\ve)}{|f_0|^2} + {|g_0|^2} \right]} 
	+ 2\int_0^T\int_\R \left[S_f\frac{-f}{p_\ep(\ve)} + S_g g\right].&
	\end{align}
	\label{lem:est-linearized}
\end{lemma}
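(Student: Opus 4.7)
The proof will be a direct weighted energy computation, multiplying the two lines of \eqref{eq:generic-lin} by carefully chosen factors designed so that (i) the cross terms arising from the antidiagonal part of $\cLep$ cancel after integration by parts, and (ii) the viscous term produces a nonnegative dissipation. Concretely, I multiply the first equation by the weight $-f/p_\ep'(\ve)$ (which is pointwise nonnegative since $p_\ep'<0$), and the second equation by $g$, then integrate in $x\in\R$ and in $t\in[0,T]$.

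For the first equation, the time-derivative term becomes
\[
\int_0^T\!\!\int_\R \frac{-f\,\p_t f}{p_\ep'(\ve)}
= \frac{1}{2}\left[\int_\R \frac{-f^2}{p_\ep'(\ve)}\right]_0^T + \frac{1}{2}\int_0^T\!\!\int_\R f^2 \,\p_t\!\left(\frac{1}{p_\ep'(\ve)}\right),
\]
and since $\ve(t,x)=\fv_\ep(x-s_\ep t)$ satisfies $\p_t\ve = -s_\ep\p_x\ve$, one has
\[
\p_t\!\left(\frac{1}{p_\ep'(\ve)}\right) = \frac{s_\ep\, p_\ep''(\ve)}{(p_\ep'(\ve))^2}\,\p_x\ve,
\]
which is exactly the weight producing the second term in \eqref{eq:energy-lin}. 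For the second equation, $\int g\,\p_t g = \tfrac12\p_t\int g^2$, and an integration by parts on the viscous term gives $-\mu\int g\,\p_x(\p_x g/\ve) = \mu\int (\p_x g)^2/\ve$, contributing the third term. Finally, adding the two identities and using that the remaining cross terms combine as $-\int_\R(f\p_x g+g\p_x f)\,dx = -\int_\R \p_x(fg)\,dx = 0$ (by decay at infinity, guaranteed by $f,g\in L^2$ and the extra regularity of $g$), one obtains \eqref{eq:energy-lin} after multiplying through by $2$.

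The only genuinely delicate points are justifications rather than computations. First, the vanishing of the boundary terms at spatial infinity requires some regularization argument, since $L^\infty(0,T;L^2)$ is not quite enough to guarantee $fg\to 0$ as $|x|\to\infty$ pointwise; one approximates $f,g$ by smooth compactly supported functions using the linear structure of the equation, proves the identity for approximants, and passes to the limit in all terms (the dissipative terms pass by weak lower semicontinuity if one only needs an inequality, but here one has an equality since the computation is algebraic). Second, time-differentiation of $t\mapsto \int f^2/(-p_\ep'(\ve))$ must be justified from the PDE: one writes the identity in its differential form $\frac{d}{dt}\int[\cdot] = \cdots$ testing against the equation in $H^{-1}$--$H^1$ duality, using that $\p_t f\in L^2_t L^2_x$ and $f/p_\ep'(\ve)\in L^\infty_t L^2_x$, and similarly for $g$.

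I do not expect any real obstacle: the structural cancellation $-\int(f\p_x g + g\p_x f) = 0$ is precisely what motivates the choice of weight $-1/p_\ep'(\ve)$ on the first line, and the transport structure of $\ve$ converts the $\p_t$ on the weight into the $s_\ep \p_x\ve$ dissipation. The lemma is essentially the linearized energy identity written in the symmetrizing variables adapted to the singular pressure.
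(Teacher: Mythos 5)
Your proof is correct and follows essentially the same route as the paper: test the system against $(-f/p_\ep'(\ve),\,g)$, convert $\partial_t(1/p_\ep'(\ve))$ into the $s_\ep\,p_\ep''(\ve)(p_\ep'(\ve))^{-2}\partial_x\ve$ weight via the transport identity $\partial_t\ve=-s_\ep\partial_x\ve$, integrate the viscous term by parts, and observe the cancellation $-\int_\R\partial_x(fg)=0$. The extra remarks on regularization and duality are standard justifications of the formal computation and do not constitute a different argument.
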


\begin{proof}
	To get~\eqref{eq:energy-lin}, we test Equation~\eqref{eq:generic-lin} against $\begin{pmatrix} \dfrac{-f}{p'_\ep(\ve)} \\ g \end{pmatrix}$ and we obtain
	\begin{align*}
	\frac{1}{2}	\dfrac{d}{dt}\int_\R{\left[-\dfrac{|f|^2}{p'_\ep(\ve)} + {|g|^2}\right]} 
	- \int_\R{\partial_t\left(-\dfrac{1}{p'_\ep(\ve)}\right)\dfrac{|f|^2}{2} } 
	- \int_\R{\partial_xg f} - \int_\R{\partial_x f g}& \\ 
	- \mu\int_\R{\partial_x \left(\dfrac{\partial_x g}{\ve}\right) g}
	= \int_\R \left[S_f\frac{-f}{p_\ep(\ve)} + S_g g\right]. &
	\end{align*}
	Using then integration by parts and $\partial_t \ve = -s_\ep \partial_x \ve$, this equality is rewritten as
	\begin{align*}
	& \dfrac{d}{dt}\int_\R{\left[-\dfrac{|f|^2}{p'_\ep(\ve)} + {|g|^2}\right]} 
	+ s_\ep\int_\R{\dfrac{p''_\ep(\ve)}{(p'_\ep(\ve))^2}\partial_x \ve{|f|^2} } 
	+ 2\mu \int_\R{\dfrac{(\partial_x g)^2}{\ve}}\\
	& = 2\int_\R \left[S_f\frac{-f}{p_\ep(\ve)} + S_g g\right].
	\end{align*}
	which leads to~\eqref{eq:energy-lin} after integration in time.
\end{proof}

\medskip

We will apply  Lemma \ref{lem:est-linearized} with $(f,g)=\partial_x^k (W,V)$ and with $k=0,1,2$. 
Therefore it is important to compute the commutator of $\cLep$ with the differential operator $\p_x$.

\medskip
\begin{lemma}[Properties of the commutator {$[\cLep, \p_x]$}]
	For all $(f,g)\in L^2_{\text{loc}} (\R_+, H^1(\R))^2  $,
	\[
	[\cLep, \p_x]\begin{pmatrix}
	f\\g
	\end{pmatrix}= \begin{pmatrix}
	-\p_x \ve p_\ep''(\ve) \p_x g\\
	-\mu \p_x \left(\frac{\p_x \ve}{\ve^2}\p_x g\right)\end{pmatrix}
	\]
	and
	\[[\cLep, \p_x^2 ]\begin{pmatrix}
	f\\g
	\end{pmatrix}= 2 [\cLep, \p_x]\begin{pmatrix}
	\partial_xf\\\partial_xg
	\end{pmatrix}- \begin{pmatrix}
	\p_x (\p_x \ve p_\ep''(\ve))\p_x g\\ - \mu \p_x \left( \p_x \left(\frac{\p_x \ve}{\ve^2 }\right) \p_x g\right) 
	\end{pmatrix}.\]
	As a consequence, we have the following bound:
	there exists a constant $C_1$ depending only on $\mu, v_+$ and $\gamma$ such that for all $\delta>0$, for all $T>0$, 
	\begin{equation}\label{eq:estim-commutator}
	\left|\int_0^T\int_{\R}{[\cLep, \p_x]\begin{pmatrix}f\\g\end{pmatrix} \cdot \begin{pmatrix}\dfrac{-\partial_xf}{p'_\ep(\ve)}\\ \partial_xg\end{pmatrix}} \right| \\\leq\delta \int_0^T\int_{\R}\partial_x \ve |\partial_x f|^2
	+ \frac{C_1}{\delta}\ep^{-2/\gamma} \int_0^T\int_{\R}{|\partial_x g|^2} .
	\end{equation}
	\label{lem:commutator}
\end{lemma}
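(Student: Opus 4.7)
The first part is to derive the two commutator identities. I would compute $\cLep\p_x\binom{f}{g}-\p_x\cLep\binom{f}{g}$ component by component. On the first line the $p_\ep'(\ve)\p_x^2 g$ terms cancel and only the contribution from differentiating the coefficient $p_\ep'(\ve)$ survives, giving $-\p_x\ve\,p_\ep''(\ve)\p_xg$. On the second line the $-\p_x^2 f$ terms cancel; expanding the viscous term via $\p_x(\p_xg/\ve)=\p_x^2g/\ve-\p_x\ve/\ve^2\cdot\p_xg$ and differentiating once more gives $\mu\p_x^2(\p_xg/\ve)-\mu\p_x(\p_x^2g/\ve)=-\mu\p_x(\p_x\ve/\ve^2\cdot\p_xg)$. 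For the second identity I would invoke the standard relation $[\cLep,\p_x^2]=[\cLep,\p_x]\p_x+\p_x[\cLep,\p_x]$, evaluate the first summand at $\binom{\p_xf}{\p_xg}$ via the first identity, and apply $\p_x$ to $[\cLep,\p_x]\binom{f}{g}$ with the product rule, collecting the symmetric pieces into $2[\cLep,\p_x]\binom{\p_xf}{\p_xg}$ and isolating the extra term in which the derivative falls onto the coefficient $\p_x\ve\,p_\ep''(\ve)$ or $\p_x\ve/\ve^2$.

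For the estimate, the dot product of $[\cLep,\p_x]\binom{f}{g}$ with $\binom{-\p_xf/p_\ep'(\ve)}{\p_xg}$ splits into two integrals. The first is
\[
I_1=\int_0^T\!\!\int_\R\frac{\p_x\ve\,p_\ep''(\ve)}{p_\ep'(\ve)}\,\p_xf\,\p_xg,
\]
which I would handle with the weighted Young inequality $|ab|\le\delta a^2+b^2/(4\delta)$, writing $a=\sqrt{\p_x\ve}\,\p_xf$ and $b=\sqrt{\p_x\ve}\,(p_\ep''(\ve)/p_\ep'(\ve))\p_xg$. This produces the targeted dissipation-like term $\delta\int\p_x\ve|\p_xf|^2$ plus a remainder weighted by $\p_x\ve\,(\gamma+1)^2/(\ve-1)^2$. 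Using Proposition~\ref{prop:profile}, which yields $\ve-1\ge v_--1=\ep^{1/\gamma}$, together with the uniform $O(1)$ bound on $\p_x\ve$ obtained by inserting the bounds on $\fv_\ep$ and $p_\ep(\fv_\ep)\le 1$ into the ODE \eqref{eq:ED-TW}, this coefficient is $O(\ep^{-2/\gamma})$ throughout $\R$, which is exactly the $\ep$-dependence in \eqref{eq:estim-commutator}.

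The second integral $I_2=-\mu\int_0^T\!\int_\R\p_x(\p_x\ve/\ve^2\cdot\p_xg)\,\p_xg$ I would integrate by parts twice: first transferring the outer derivative onto $\p_xg$ to get $(\mu/2)\int\p_x\ve/\ve^2\cdot\p_x((\p_xg)^2)$, then once more to reach $-(\mu/2)\int\p_x(\p_x\ve/\ve^2)(\p_xg)^2$. It then suffices to bound $\|\p_x(\p_x\ve/\ve^2)\|_\infty$. Differentiating the ODE \eqref{eq:ED-TW} expresses $\p_x^2\ve$ as $A_\ep'(\ve)\p_x\ve$, whose worst contribution involves $-p_\ep'(\ve)=O(\ep^{-1/\gamma})$ in the congested zone; combined with $\ve\ge 1$ and $\p_x\ve=O(1)$, this yields $\|\p_x(\p_x\ve/\ve^2)\|_\infty\le C\ep^{-1/\gamma}$, which is dominated by the $\ep^{-2/\gamma}$ already emerging from $I_1$, and no dissipation term needs to be borrowed.

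The main obstacle is the control of the singular ratio $\p_x\ve/(\ve-1)^2$ through the congested zone in $I_1$: both the second derivative of the pressure and the spatial derivative of the profile interact there, and the $\ep^{-2/\gamma}$ loss in \eqref{eq:estim-commutator} is sharp and stems precisely from this region. This is ultimately what dictates the weighting $\ep^{2k/\gamma}$ placed on the $k$-th derivative norms in Theorem~\ref{thm:WV}, so that the commutator penalty generated at each level of differentiation can be absorbed against the dissipation $\int\p_x\ve|\p_xf|^2$ produced by Lemma~\ref{lem:est-linearized}.
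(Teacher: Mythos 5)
Your proposal is correct and follows essentially the same route as the paper: a direct expansion of $\cLep\p_x-\p_x\cLep$ for the first identity, the decomposition $[\cLep,\p_x^2]=[\cLep,\p_x]\p_x+\p_x[\cLep,\p_x]$ for the second, an integration by parts to turn the viscous contribution into $-\tfrac{\mu}{2}\int\p_x\bigl(\tfrac{\p_x\ve}{\ve^2}\bigr)|\p_xg|^2$, and a weighted Young/Cauchy--Schwarz step on the pressure contribution that trades a $\delta\int\p_x\ve|\p_xf|^2$ term against a remainder controlled via $\ve-1\geq\ep^{1/\gamma}$ (the paper's \eqref{est:v-infty}), $|\p_x\ve|=O(1)$, and $|\p_x^2\ve|\leq C\ep^{-1/\gamma}\p_x\ve$ (the paper's \eqref{eq:d2v_ep}). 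The only cosmetic difference is that you apply weighted Young directly to $\sqrt{\p_x\ve}\,\p_xf$ and $\sqrt{\p_x\ve}\,(p_\ep''/p_\ep')\,\p_xg$, whereas the paper first uses Cauchy--Schwarz with $\|p_\ep''/p_\ep'\|_\infty$ factored out and then Young, which is equivalent.
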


Lemma \ref{lem:commutator} will be proved in Paragraph \ref{ssec:commutator}.

\medskip
\begin{rem}{\label{rem:commutator}}
	Let us stress  that the term $ \p_x \ve p_\ep''(\ve) \p_x g$ in the commutator $[\cLep, \p_x]$ is responsible for a loss of $\ep^{2/\gamma}$ in the second integral of~\eqref{eq:estim-commutator}.
	It means that we will have to multiply our energy estimate at each iteration by $\ep^{2/\gamma}$. In other words, our total energy will be
	\[
	\sum_{k=0}^2 \ep^{2k/\gamma} \int_\R{\left[\dfrac{-1}{p'_\ep(\ve)}{|\p_x^k W (t)|^2}+ {|\p_x^k V(t)|^2} \right]}.
	\]
\end{rem}

\subsection{Construction of global strong solutions of \eqref{eq:NL-VW}.}
In this paragraph, we construct global smooth solutions of \eqref{eq:NL-VW} under a smallness assumption. 
Following Lemma \ref{lem:est-linearized}, we derive successive estimates on $(V,W)$ and their space derivatives up to order 2. 
Hence, we define
\[\ba 
E_k(t;V,W):= \int_\R{\left[\dfrac{-1}{p'_\ep(\ve)}{|\p_x^k W (t)|^2}+ {|\p_x^k V(t)|^2} \right] dx},\\
D_k(t;V,W):= \int_\R \partial_x\ve |\p_x^k W|^2  dx
+  \int_\R{{(\partial_x^{k+1} V)^2} dx}.
\ea
\]
Note that 
\[
\frac{p_\ep''(\ve)}{p_\ep'(\ve)^2}= \frac{(\gamma+1) (\ve-1)^\gamma}{\gamma\ep }= \frac{\gamma+1}{\gamma p_\ep(\ve)}\geq \frac{\gamma+1}{\gamma}.
\]
As a consequence, there exists a constant $C_2$, depending only on $\gamma$, $\mu$ and $v_+$, such that for $\ep$ small enough, for all $(W,V)\in H^k(\R) \times H^{k+1}(\R) $,
\[
D_k(t; V, W) \leq C_2 \left(s_\ep\int_\R{\dfrac{p''_\ep(\ve)}{(p'_\ep(\ve))^2} \partial_x\ve {|\p_x^k W|^2}}
+ 2\mu \int_\R{\dfrac{(\partial_x^{k+1} V)^2}{\ve}}\right).
\]
The goal is to prove, by a fixed point argument, existence and uniqueness of global smooth solutions of \eqref{eq:NL-VW}, under the assumption that $E_k(0)$ is small enough for $k=0,1,2$. 
Given the couple $(W_1,V_1)$, we introduce the following system
\be\label{eq:syst-fixed-point}
\ba
&\p_t \begin{pmatrix} W_2\\ V_2 \end{pmatrix} + \cLep\begin{pmatrix} W_2\\ V_2 \end{pmatrix}= \begin{pmatrix} F_\ep(\p_x V_1)\\ G_\ep(\p_x V_1) \end{pmatrix}\\
&(W_2, V_2)_{|t=0}=(W_0, V_0)
\ea
\ee
and the application
\[
\mathcal A^\ep :( W_1, V_1) \in \cX \mapsto ( W_2, V_2) \in \cX,
\]
where
\[
\cX:= \{ (W,V) \in L^\infty(\R_+;  H^2(\R))^2 ;\ D_k(t;W,V)\in L^1(\R_+) \text{ for }k=0,1,2 \}.
\]
We endow $\cX$ with the norm
\begin{equation}\label{eq:df-norm}
\|(W,V)\|_{\cX}^2:= \sup_{t\in [0,+\infty[} \left[\sum_{k=0}^2 c^k \ep^{2k/\gamma} \left[E_k(t,W(t),V(t)) + \int_0^t D_k(s,W(s),V(s))\:ds\right]
\right],
\end{equation}
where $c$ is a constant to be determined, which is meant to be small but independent of $\ep$,
and for $\delta>0$, we denote by $B_\delta$ the ball 
\begin{equation}
B_\delta = \{(W,V)\in \cX,\ \|(W,V)\|_{\cX}< \delta \ep^{\frac{5}{2\gamma}}\}.
\end{equation}

The result of Theorem \ref{thm:WV} will be achieved with the proof of the following proposition.

\medskip
\begin{prop}
	Assume that
	\[
	E_0(0; W_0, V_0) + \ep^{2/\gamma} E_1(0; W_0, V_0) + \ep^{4/\gamma} E_2(0; W_0, V_0) \leq  \delta_0^2 \ep^{\frac{5}{\gamma}}.
	\]
	for some $\delta_0 > 0$.
	There exist two positive constants $\delta^*$ and $c_0$, depending only on $v_+, \mu$ and $\gamma$, such that if $0 < \delta_0 < \delta^*$, $0 < c < c_0$, then there exists $\delta=\delta(\delta_0, v_+, \mu, \gamma)$ such that
	\begin{itemize}
		\item The ball $B_{\delta}$ is stable by $\mathcal A^\ep$.
		
		\item The application $\mathcal A^\ep$ is a contraction on $B_{\delta}$.
	\end{itemize}
	As a consequence, $\mathcal A^\ep$ has a unique fixed point in $B_{\delta}$.
	\label{prop:fixed-point}
\end{prop}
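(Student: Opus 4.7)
The plan is to apply the Banach fixed point theorem on the ball $B_\delta$ equipped with the norm $\|\cdot\|_\cX$. First I would show that for any $(W_1, V_1) \in \cX$, the linearized system \eqref{eq:syst-fixed-point} admits a unique solution $(W_2, V_2) \in \cX$, so that $\mathcal A^\ep$ is well-defined. Existence follows from classical theory for linear hyperbolic-parabolic systems (Galerkin approximation combined with the weighted energy estimates derived below), and uniqueness follows directly from \cref{lem:est-linearized} applied to the difference of two solutions with zero data.

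To show $\mathcal A^\ep(B_\delta) \subset B_\delta$, I would apply \cref{lem:est-linearized} to the systems satisfied by $\p_x^k(W_2, V_2)$ for $k=0,1,2$. These are of the form \eqref{eq:generic-lin} with sources given by $\p_x^k(F_\ep(\p_x V_1), G_\ep(\p_x V_1))$ plus the commutator $[\cLep, \p_x^k](W_2, V_2)$. The commutator contribution is controlled by \cref{lem:commutator} (applied once for $k=1$, and iterated via the identity stated there for $k=2$) at the cost of a factor $\ep^{-2/\gamma}$ per differentiation; this is precisely why the $k$-th energy is weighted by $c^k \ep^{2k/\gamma}$ in \eqref{eq:df-norm}, as emphasized in \cref{rmk:commutator}. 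The constant $c>0$ is then chosen small, independently of $\ep$, so that the term $\delta \int \p_x \ve |\p_x^k W|^2$ in \eqref{eq:estim-commutator} is absorbed into the dissipation $D_k$ at the next level.

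The nonlinear source terms require the most care. For $(W_1, V_1) \in B_\delta$, Sobolev interpolation combined with the definition of $\|\cdot\|_\cX$ yields $\|\p_x V_1\|_{L^\infty} \lesssim \delta \ep^{1/\gamma}$, so that $\ve + \p_x V_1 > 1$ pointwise provided $\delta$ is small enough. A Taylor expansion gives $|F_\ep(\p_x V_1)| \leq C p''_\ep(\ve)(\p_x V_1)^2$ (with analogous bounds on derivatives), and \cref{lem:est-linearized} carries the weight $(-p'_\ep(\ve))^{-1}$; the ratio $p''_\ep(\ve)/(-p'_\ep(\ve)) = (\gamma+1)/(\ve-1)$ is singular only in the congested zone, where however $(\p_x V_1)^2$ is itself small thanks to $\|\p_x V_1\|_{L^\infty}^2 \lesssim \delta^2 \ep^{2/\gamma}$. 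After Cauchy--Schwarz and Young, the nonlinear contribution at level $k$ is bounded by $\|(W_1, V_1)\|_\cX^2 \|(W_2, V_2)\|_\cX$ times appropriate powers of $\ep$, leading to an estimate of the form
\[
\|(W_2, V_2)\|_\cX^2 \leq C \delta_0 \ep^{5/\gamma} + C \ep^{-5/(2\gamma)} \|(W_1, V_1)\|_\cX^2 \|(W_2, V_2)\|_\cX.
\]
Choosing first $c$ small (to close the commutator), then $\delta^*$ small and $\delta$ of order $\sqrt{\delta_0}\, \ep^{5/(2\gamma)}$, this gives stability of $B_\delta$.

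For the contraction property I would apply exactly the same scheme to the difference of two images: it solves \eqref{eq:syst-fixed-point} with zero initial data and sources $F_\ep(\p_x V_1) - F_\ep(\p_x \widetilde V_1)$ and $G_\ep(\p_x V_1) - G_\ep(\p_x \widetilde V_1)$. These are Lipschitz in $\p_x V_1 - \p_x \widetilde V_1$ with a small constant proportional to $\|\p_x V_1\|_{L^\infty} + \|\p_x \widetilde V_1\|_{L^\infty} \lesssim \delta \ep^{1/\gamma}$, so the Lipschitz constant of $\mathcal A^\ep$ on $B_\delta$ is strictly less than $1$ for $\delta$ small enough. The main obstacle throughout is the consistent $\ep$-bookkeeping: the commutator loss $\ep^{-2/\gamma}$, the singular weight $(-p'_\ep(\ve))^{-1}$, the singular derivative $p''_\ep(\ve) \sim \ep^{-2/\gamma}$, and the transition scale $\ve-1 \sim \ep^{1/\gamma}$ must all combine consistently with the weights $c^k \ep^{2k/\gamma}$; this is precisely what forces the radius of the admissible ball to be of order $\ep^{5/(2\gamma)}$ and fixes the exponent $5/\gamma$ appearing in \eqref{eq:init-1}.
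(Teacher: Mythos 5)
Your proposal follows essentially the same strategy as the paper: iterate the weighted energy estimate of \cref{lem:est-linearized} on $\partial_x^k(W_2,V_2)$ for $k=0,1,2$, absorb the $\ep^{-2/\gamma}$ commutator losses from \cref{lem:commutator} by weighting each level with $c^k\ep^{2k/\gamma}$ and choosing $c$ small, estimate the nonlinear sources via \cref{lem:F-ep_G-ep} and \cref{lem:diff-F-ep_G-ep} with Sobolev interpolation, and close by a fixed-point argument in $B_\delta$. One small slip: you state that $\delta$ should be chosen of order $\sqrt{\delta_0}\,\ep^{5/(2\gamma)}$, but in the paper's notation $\delta$ is $\ep$-independent (it depends only on $\delta_0,v_+,\mu,\gamma$) and the factor $\ep^{5/(2\gamma)}$ is already encoded in the radius of $B_\delta$; the correct reading of your own inequality $\|(W_2,V_2)\|_\cX^2 \leq C\delta_0\ep^{5/\gamma}+C\ep^{-5/(2\gamma)}\|(W_1,V_1)\|_\cX^2\|(W_2,V_2)\|_\cX$ gives, after Young, $\|(W_2,V_2)\|_\cX^2\leq (C\delta_0+C\delta^4)\ep^{5/\gamma}$, so the closing condition is $C\delta_0+C\delta^4<\delta^2$, and one should take $\delta\sim\sqrt{\delta_0}$ with $\delta_0<\delta^*$ small enough.
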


\medskip
Note that we are able to prove a global result. This comes from the fact that our system is dissipative, which allows us to circumvent the use of the Gronwall Lemma.

As a preliminary, let us recall that
\begin{equation}
\left\| \frac{1}{\ve-1}\right\|_\infty \leq \ep^{-1/\gamma},\label{est:v-infty}
\end{equation}
so that $|p_\ep'(\ve)|\leq \gamma \ep^{-1/\gamma}$. Additionally, differentiating \eqref{eq:ED-TW}, we have
\begin{equation}\label{eq:d2v_ep}
|\partial^2_{x}\ve| \leq C \ep^{-1/\gamma} \partial_x \ve.
\end{equation}
Hence, if $(W,V) \in \cX$ then for $k=0,1$, $m=0,1,2$,
\begin{align}
\|\partial_x^m V\|_{L^\infty_t(L^2_x)}&\leq  C \ep^{-m/\gamma}  \| (W,V)\|_{\cX},\quad
\|\partial_x^m W\|_{L^\infty_t(L^2_x)}\leq  C \ep^{-\frac{1+2m}{2\gamma}}  \| (W,V)\|_{\cX},\\
\|\partial_x^k V\|_{L^\infty_{t,x}} & \leq C\|\partial_x^k V\|_{L^\infty_t L^2_x}^{1/2}\|\partial^{k+1}_{x} V\|_{L^\infty_t L^2_x}^{1/2} \nonumber\\
&\leq C \ep^{-\frac{k}{2\gamma}}\| (W,V)\|_{\cX}^{1/2} \ep^{-\frac{k+1}{2\gamma}}\| (W,V)\|_{\cX}^{1/2}\nonumber \\&\leq C \ep^{-\frac{2k+1}{2\gamma}}  \| (W,V)\|_{\cX},\label{est:dxV-infty}\\ \|\p_x^{m+1} V\|_{L^2_{t,x}}&\leq C \ep^{-\frac{m}{\gamma}} \| (W,V)\|_{\cX},\\
\|\partial_x^k W\|_{L^\infty_{t,x}} & \leq  C \ep^{-\frac{k+1}{\gamma}}  \| (W,V)\|_{\cX},\label{est:dxW-infty},
\end{align}
where the constant $C$ depends only on $c$ and $\gamma$. We will use this remark repeatedly when estimating the source term $(F_\ep, G_\ep)$.\\
Note in addition that if
\[
\|(W,V)\|_{\cX} \leq \delta \ep^{\frac{5}{2\gamma}}
\]
then the following inequality holds
\begin{equation}\label{eq:bound-dxV}
\|\partial_x V\|_{L^\infty_{t,x}} \leq C \delta \ep^{\frac{5}{2\gamma} - \frac{3}{2\gamma}} \leq C\delta \ep^{\frac{1}{\gamma}} < \ep^{\frac{1}{\gamma}}
\end{equation}
provided that $\delta$ is small enough.
In other words, for sufficiently small $\delta$ the perturbation $v = \ve + \partial_x V$ will never reach the critical value $v^*=1$.

In order to prove Proposition \ref{prop:fixed-point}, we rely on the energy estimate from Lemma \ref{lem:est-linearized}, and we treat the right-hand side $(F_\ep(\p_x V_1), G_\ep(\p_x V_1))$, defined in \eqref{eq:FG}, as a perturbation that we estimate thanks to Lemmas \ref{lem:F-ep_G-ep} and \ref{lem:diff-F-ep_G-ep} below. 
The largest part of the proof is devoted to the stability of the ball $B_{\delta}$ by the application $\mathcal A_\ep$. We derive successive estimates for $E_k(t; W_2,V_2)$ in terms of $\|(W_1,V_1)\|_{\cX}$ and $ \|(W_2,V_2)\|_{\cX}$. 
Note that we cannot close the estimates before performing the estimate on $E_2$. Furthermore, when addressing the bound on $E_1$ (resp. $E_2$), we will use the commutator result of Lemma \ref{lem:commutator} together with the control on $\p_x V$ (resp. $\p_x^2 V$) in $L^2_{t,x}$ coming from lower order estimates.
Eventually, we prove that $\mathcal A_\ep$ is a contraction on $B_{\delta}$.

\subsubsection*{Tools and heuristics for the control of non-linear terms}
One of the main technical difficulties of the estimates comes from the nonlinear terms $F_\ep$ and $G_\ep$. 
We will rely on the following Lemma (see also Lemma \ref{lem:diff-F-ep_G-ep}):

\medskip
\begin{lemma}
	Let us write $G_\ep(f)= \mu \p_x (H_\ep(f))$, where 
	\[
	H_\ep= \ln \left(1 + \frac{f}{\ve}\right) - \frac{f}{\ve} ,
	\]
	and recall that $F_\ep(f)=-\left[ p_\ep(v_\ep +f) - p_\ep (v_\ep) - p_\ep'(v_\ep) f\right]$.
	Provided that $|f|\leq \frac{\ep^{1/\gamma}}{2}$,  there exists a constant $C$, independent of $\ep$, such that the following estimates hold:
	\begin{align*}
	| F_\ep(f)| &\leq C p_\ep(\ve)\frac{f^2}{(\ve-1)^2}, \\
	|\partial_x F_\ep(f)| & \leq C \partial_x \ve |p'_\ep(\ve)|\frac{f^2}{(\ve-1)^2}  +  C p_\ep(\ve)\frac{|f|\; |\p_x f| }{(\ve-1)^2} , \\
	|\partial^2_{x} F_\ep(f)| 
	& \leq C \ep^{-1/\gamma} \partial_x \ve |p'_\ep(\ve)|  \frac{f^2}{(\ve-1)^2} 
	+ C p_\ep(\ve)\frac{(\p_x f)^2}{(\ve -1)^2}
	+ C p_\ep(\ve) \frac{|f|\; |\p_x^2 f|}{(\ve -1)^2}, \nonumber
	\end{align*}
	and
	\begin{align*}
	|H_\ep(f)| &\leq C |f|^2, \\ 
	|\partial_x H_\ep(f)| & \leq C |f| |\partial_{x}f| + C|f|^2, \\
	|\partial^2_{x} H_\ep(f)| & \leq C |f||\partial^{2}_{x}f| + C \Big(|f| + |\partial_{x}f| \Big)|\partial_{x}f|
	+ C\Big(1 + |\partial^2_{x}\ve| \Big)|f|^2.
	\end{align*}
	\label{lem:F-ep_G-ep}
\end{lemma}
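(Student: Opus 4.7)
The plan is to use second-order Taylor expansions: for $F_\ep$, expanding $p_\ep$ around $\ve$, and for $H_\ep$, expanding $h(y):=\ln(1+y)-y$ around $y=0$. The hypothesis $|f|\leq \ep^{1/\gamma}/2$ combined with the lower bound $\ve-1\geq \ep^{1/\gamma}$ (which follows from $\fv_\ep\geq v_-^\ep=1+\ep^{1/\gamma}$) guarantees that for every $t\in[0,1]$ the intermediate value satisfies $\ve+tf-1\geq (\ve-1)/2$; hence each derivative $p_\ep^{(k)}(\ve+tf)$ is comparable, uniformly in $t$, to $p_\ep^{(k)}(\ve)$. Likewise $|f/\ve|\leq 1/2$ keeps $h$ and its derivatives uniformly bounded on the relevant range.

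For $F_\ep$, the zeroth-order bound follows from the integral remainder identity $F_\ep(f)=-f^2\int_0^1(1-t)p_\ep''(\ve+tf)\,dt$ together with $|p_\ep''(v)|=\gamma(\gamma+1)\ep/(v-1)^{\gamma+2}$, which yields $|F_\ep(f)|\leq C p_\ep(\ve) f^2/(\ve-1)^2$. The first derivative is obtained by differentiating $F_\ep$ directly, producing the two natural groupings
\[
\partial_x F_\ep(f)=-\partial_x\ve\,\bigl[p_\ep'(\ve+f)-p_\ep'(\ve)-p_\ep''(\ve)f\bigr]-\partial_x f\,\bigl[p_\ep'(\ve+f)-p_\ep'(\ve)\bigr];
\]
the first bracket is $O(|p_\ep'''(\ve)|f^2)=O(|p_\ep'(\ve)|f^2/(\ve-1)^2)$ and the second is $O(|p_\ep''(\ve)||f|)=O(p_\ep(\ve)|f|/(\ve-1)^2)$ by Taylor. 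For $\partial_x^2 F_\ep$ I differentiate once more, group similarly, and invoke \eqref{eq:d2v_ep} to absorb the term carrying $\partial_x^2\ve$ into $\ep^{-1/\gamma}\partial_x\ve$, which explains the extra $\ep^{-1/\gamma}$ factor in the stated inequality.

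For $H_\ep$, I write $H_\ep(f)=h(f/\ve)$ and use the bounds $|h(y)|\leq C y^2$, $|h'(y)|\leq C|y|$, $|h''(y)|\leq C$ valid for $|y|\leq 1/2$, together with the chain rule. Since $\ve\geq 1$, division by $\ve$ costs nothing. One expands $\partial_x(f/\ve)=\partial_x f/\ve-f\partial_x\ve/\ve^2$ and the analogous expression for $\partial_x^2(f/\ve)$, and invokes the uniform bound on $\partial_x\ve$, which is an immediate consequence of the ODE \eqref{eq:ED-TW} since both $\fv_\ep$ and $p_\ep(\fv_\ep)$ are uniformly bounded in $\ep$. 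This yields the three estimates on $H_\ep$; in the last one the factor $|\partial_x^2\ve|$ is left explicit, as controlling it further would cost a power of $\ep^{-1/\gamma}$ that is not needed at this stage.

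The main obstacle here is not analytic but accounting: every appearance of $p_\ep^{(k)}(\ve)$ for $k\geq 2$ must be rewritten in terms of $p_\ep(\ve)$, $|p_\ep'(\ve)|$ and powers of $(\ve-1)^{-1}$ so that the pointwise inequalities have exactly the form in which the weights of the linearized energy estimate of \cref{lem:est-linearized} can absorb them in later sections.
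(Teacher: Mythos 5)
Your proposal is correct and follows essentially the same route as the paper: differentiate $F_\ep$ directly to obtain the same two (resp.\ five) bracketed Taylor remainders, exploit the comparability of $p_\ep^{(k)}(\ve+tf)$ with $p_\ep^{(k)}(\ve)$ under the smallness hypothesis $|f|\leq\ep^{1/\gamma}/2$ (your formulation via the intermediate point $\ve+tf-1\geq(\ve-1)/2$ is exactly the paper's $C_k^{-1}|p_\ep^{(k)}(\ve)|\leq |p_\ep^{(k)}(\ve+f)|\leq C_k|p_\ep^{(k)}(\ve)|$), invoke \eqref{eq:d2v_ep} for the $\partial_x^2\ve$ term, and finish with Young's inequality; the paper then dismisses the $H_\ep$ bounds as "similar," which your chain-rule computation on $h(y)=\ln(1+y)-y$ fills in correctly. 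The only stylistic difference is that you work with the integral form of the Taylor remainder whereas the paper uses the pointwise derivative-comparison form, but these are interchangeable here.
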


When we perform $L^2$ estimates, taking into account Lemma \ref{lem:F-ep_G-ep}, we need to control terms of the type
\[
\int_0^t \int_{\R}f_\ep[\ve] | \p_x^k U_1|\; |  \p_x^l U_1| \; | \p_x^m U_2|,
\]
with $k,l,m \in \{0,1,2\}$, $U_i = V_i $ or $W_i$, and $f_\ep[\ve] $ is a function of $\ve$ and its derivatives. In order to guide the reader, we establish the following (ordered) rules to control such terms:
\begin{enumerate}
	\item If a term contains a factor of the form $\p_x \ve  \p_x^k W_i$, this factor is controlled through $D_k$;
	\item The (remaining) term with the smallest number of derivatives is controlled in $L^\infty_x$, and the other(s) in $L^2_x$;
	\item Note that $\|\p_x^k V\|_{L^2_x}$ for $k\geq 1$ could be controlled either through $E_k$ or through $D_{k-1}$. 
	Nevertheless we will always use $D_{k-1}$ to ensure uniform-in-time estimates.
\end{enumerate}

\medskip 
\subsubsection*{Estimate for $k=0$}  
For $k=0$, the estimate from Lemma~\ref{lem:est-linearized} applied to $f=W_2$, $g=V_2$, $S_f = F_\ep(\partial_xV_1)$ and $S_g = \mu \partial_xH_\ep(\partial_x V_1)$, entails that for all $t\geq 0$,
\begin{align*}
& E_0(t; W_2, V_2) + C_2^{-1} \int_0^t D_0(s; W_2, V_2) \:ds \\
& \leq E_0(0; W_0, V_0) + 2\int_0^t \left(\left|\int F_\ep(\p_x V_1) \frac{1}{p_\ep'(\ve)} W_2\right| + \left| \mu \int \p_x H_\ep(\p_x V_1)  V_2\right|\right).
\end{align*}
Using estimate \eqref{est:v-infty} and Lemma \ref{lem:F-ep_G-ep}, we infer that 
\begin{align*}
&\int_0^\infty \int_\R \left(\left| F_\ep(\p_x V_1) \frac{1}{p_\ep'(\ve)} W_2\right| + \left| \mu  \partial_xH_\ep(\p_x V_1)  V_2\right|\right)\\
&\leq C \int_0^\infty \int_\R \left( \frac{(\p_x V_1)^2}{\ve -1} |W_2| +   |\p_x V_1||\partial^2_{x}V_1| |V_2| +   |\p_x V_1|^2 |V_2|\right)\\
&\leq C \left( \ep^{-1/\gamma} \|W_2\|_{L^\infty_{t,x}} \|\p_x V_1\|_{L^2_{t,x}}^2 + \|V_2\|_{L^\infty_{t,x}} \|\p_x V_1\|_{L^2_{t,x}}\|\p^2_{x} V_1\|_{L^2_{t,x}}  + \|V_2\|_{L^\infty_{t,x}} \|\p_x V_1\|_{L^2_{t,x}}^2\right).
\end{align*}
Using estimates \eqref{est:dxV-infty} and \eqref{est:dxW-infty} together with the assumption $(W_1, V_1)\in B_{\delta}$, we infer that the right-hand side  above is bounded by
\[
C\ep^{-\frac{2}{\gamma}} \| (W_2, V_2)\|_\cX\| (W_1, V_1)\|_\cX^2 
\leq C \delta^2 \ep^{\frac{3}{\gamma}} \| (W_2, V_2)\|_\cX.
\]
Therefore we obtain
\be\label{eq:k=0}
\sup_{t\in [0,+\infty[}\left(E_0(t; W_2, V_2) + \int_0^t D_0(s; W_2, V_2) \:ds \right) 
\leq C \left( E_0(0; W_0, V_0)  + \delta^2 \ep^{\frac{3}{\gamma}}\| (W_2, V_2)\|_\cX\right).
\ee

\subsubsection*{Estimate for $k=1$} 
We apply now Lemma~\ref{lem:est-linearized} to 
\begin{equation*}
f=\partial_xW_2, 
\quad g=\partial_xV_2, 
\end{equation*}

\begin{align*}
\begin{pmatrix}S_f \\S_g \end{pmatrix}
= \begin{pmatrix}\partial_xF_\ep(\partial_xV_1) 
\\ \mu \partial^2_{x}H_\ep(\partial_x V_1) \end{pmatrix}
+ [\cLep,\partial_x] \begin{pmatrix} W_2 \\ V_2 \end{pmatrix},
\end{align*} 
and get, for all $t\geq 0$,
\begin{align}\label{eq:k=1-0}
& E_1(t; W_2, V_2) +  C_2^{-1}\int_0^t D_1(s; W_2, V_2) \:ds \nonumber\\
& \leq E_1(0; W_0, V_0) 
+ 2 \int_0^t \left|\int_\R\dfrac{1}{p_\ep'(\ve)} \partial_x F_\ep(\p_x V_1)  \partial_xW_2\right| 
+ 2 \mu \int_0^t \left| \int_\R \p_{x} H_\ep(\p_x V_1)  \partial^2_{x} V_2\right|\\
& \quad	+ 2\int_0^t \int_{\R} \left| {[\cLep, \p_x]\begin{pmatrix}W_2\\ V_2\end{pmatrix} \cdot \begin{pmatrix}\dfrac{-\partial_xW_2}{p'_\ep(\ve)} \\ \partial_x V_2\end{pmatrix}} \right| \nonumber
\end{align}
The term involving the commutator is controlled via inequality~\eqref{eq:estim-commutator}, and is bounded by
\[
\frac{ C_2^{-1}}{2}\int_0^t D_1(s; W_2, V_2) \:ds + 8C_1C_2\ep^{-2/\gamma}\int_0^tD_0(s,W_2,V_2)\:ds. 
\]
The first integral can be absorbed in the left-hand side of \eqref{eq:k=1-0}.

By using Lemma~\ref{lem:F-ep_G-ep} we can estimate the integrals of nonlinear terms of the right-hand side of \eqref{eq:k=1-0}, namely
\begin{align*}
& \int_0^t \left|\int_\R\dfrac{1}{p_\ep'(\ve)} \partial_x F_\ep(\p_x V_1)  \partial_xW_2\right| 
+ \mu \int_0^t \left| \int_\R \p_{x} H_\ep(\p_x V_1)  \partial^2_{x} V_2\right|\\
& \leq 
C\int_0^t \int_\R{\partial_x\ve \left|\dfrac{\partial_x V_1}{\ve-1}\right|^2 |\partial_xW_2|} 
+ C\int_0^t \int_\R{  \frac{|\p_x V_1|\; | \p^2_{x} V_1|}{\ve -1}  |\partial_xW_2|} \\
& \quad + C\int_0^t \int_\R{|\partial_x V_1||\partial^2_{x}V_1||\partial^2_{x}V_2|} 
+ C\int_0^t \int_\R{|\partial_x V_1|^2|\partial^2_{x}V_2|}.
\end{align*}
We follow the guidelines stated at the beginning of the proof, and use estimates \eqref{est:v-infty}-\eqref{est:dxV-infty} repeatedly. 
We infer that these nonlinear terms are bounded by
\begin{align*}
& \int_0^t \left|\int_\R\dfrac{1}{p_\ep'(\ve)} \partial_x F_\ep(\p_x V_1)  \partial_xW_2\right| 
+ \mu \int_0^t \left| \int_\R \p_{x} H_\ep(\p_x V_1)  \partial^2_{x} V_2\right|\\
& \leq  C\bigg[\left(\int_0^t D_1(s; W_2, V_2)ds\right)^{1/2} \ep^{-2/\gamma} \|\p_x V_1\|_{L^\infty_{t,x}}\|\p_x V_1\|_{L^2_{t,x}} \\
& \quad + \ep^{-1/\gamma}\|\p_x W_2\|_{L^\infty_{t,x}} \|\p_x V_1\|_{L^2_{t,x}} \|\p_x^2 V_1\|_{L^2_{t,x}} \\
& \quad + \|\p_x V_1\|_{L^\infty_{t,x}}\|\p_x^2 V_1\|_{L^2_{t,x}} \|\p_x^2 V_2\|_{L^2_{t,x}} +  \|\p_x V_1\|_{L^\infty_{t,x}}\|\p_x V_1\|_{L^2_{t,x}} \|\p_x^2 V_2\|_{L^2_{t,x}} \bigg]\\
& \leq C\ep^{- \frac{9}{2\gamma}} \|(W_2, V_2)\|_\cX \|(W_1,V_1)\|_{\cX}^2 \\
& \leq C \delta^2 \ep^{\frac{1}{2\gamma}}  \|(W_2, V_2)\|_\cX .
\end{align*}
Therefore
\begin{align*}
&\sup_{t\geq 0} \left[E_1(t; W_2, V_2) + \frac{C_2^{-1}}{2}\int_0^t D_1(s; W_2, V_2)\:ds\right]\\
&\leq  E_1(0; W_0, V_0)  + 8 C_1C_2\ep^{-2/\gamma} \int_0^\infty D_0(s; W_2, V_2) + C\delta^2 \ep^{\frac{1}{2\gamma}} \|(W_2, V_2)\|_\cX.
\end{align*}
Note that without loss of generality, we can always choose $C_2\geq 1/2$, so that the above inequality becomes
\begin{align*}
&\sup_{t\geq 0} \left[E_1(t; W_2, V_2) + \int_0^t D_1(s; W_2, V_2)\:ds\right]\\
& \leq 2 C_2 E_1(0; W_0, V_0)  + 16C_1C_2^2\ep^{-2/\gamma} \int_0^\infty D_0(s; W_2, V_2) + C\delta^2 \ep^{\frac{1}{2\gamma}}   \|(W_2, V_2)\|_\cX.
\end{align*}

Hence, choosing $c\leq c_0\leq (32C_1C_2^2)^{-1}$ and using~\eqref{eq:k=0}
\begin{align}\label{eq:k=1}
& \sup_{t\in \R_+}\left[E_0(t; W_2, V_2) + c\ep^{2/\gamma}E_1(t; W_2, V_2)\right]
+ \int_{\R_+}\left(D_0(s; W_2, V_2) +c\ep^{2/\gamma}D_1(s; W_2, V_2) \right)\:ds \nonumber\\
& \leq C \left[ E_0(0; W_0, V_0)  + \ep^{2/\gamma}E_1(0; W_0, V_0)   + \delta^2 \ep^{\frac{5}{2\gamma}}  \|(W_2, V_2)\|_\cX\right].
\end{align}

\medskip
\subsubsection*{Estimate for $k=2$}
We apply once again Lemma~\ref{lem:est-linearized} to 
\[
f=\partial^2_{x}W_2, 
\quad g=\partial^2_{x}V_2
\]
with the source term (see Lemma~\ref{lem:commutator})
\begin{align*}
\begin{pmatrix}S_f \\S_g \end{pmatrix}
& = \begin{pmatrix}\partial^2_{x}F_\ep(\partial_xV_1) \\ \mu \partial^3_{x}H_\ep(\partial_x V_1) \end{pmatrix}
+ [\cLep,\partial^2_{x}] \begin{pmatrix}W_2 \\V_2 \end{pmatrix} \\
& = \begin{pmatrix}\partial^2_{x}F_\ep(\partial_xV_1) \\ \mu \partial^3_{x}H_\ep(\partial_x V_1) \end{pmatrix}
+ 2[\cLep,\partial_{x}] \begin{pmatrix}\partial_xW_2 \\\partial_xV_2 \end{pmatrix}
- \begin{pmatrix} \partial_x( p_\ep''(\ve) \p_x \ve) \p_x V_2\\ -\mu \p_x \left(\partial_x\left(\frac{\p_x \ve}{\ve^2}\right)\p_x V_2\right)\end{pmatrix}.
\end{align*} 
Observe first that from~\eqref{eq:estim-commutator}, we have
\begin{align*}
& 4 \left |\int_0^t\int_{\R}{[\cLep,\partial_{x}] \begin{pmatrix}\partial_xW_2 \\\partial_xV_2 \end{pmatrix} 
	\cdot \begin{pmatrix}\dfrac{-\partial^2_{x}W_2}{p'_\ep(\ve)} \\ \partial^2_{x}V_2\end{pmatrix}} \right| \nonumber \\
& \leq \dfrac{C_2^{-1}}{4}\int_0^t\int_{\R}\partial_x \ve |\partial^2_{x}W_2|^2
+ 84 C_1 C_2 \ep^{-2/\gamma}\int_0^t\int_{\R}{|\partial^2_{x}V_2|^2} .
\end{align*}
Concerning the additional commutator term, we have on the one hand, using the control~\eqref{eq:d2v_ep} on $\partial^2_{x}\ve$,
\begin{align*}
&2 \left| \int_0^t \int_{\R }   \partial_x( p_\ep''(\ve) \p_x \ve) \p_x V_2 \dfrac{-\partial^2_{x}W_2}{p'_\ep(\ve)}  \right| 	\\
& \leq 2 \int_0^t \int_{\R } (\gamma+1)\left( (\gamma+2)  \frac{(\p_x \ve)^2}{(\ve -1)^2} + \frac{|\partial^2_{x} \ve|}{\ve -1}\right) | \partial_xV_2|\; | \partial^2_{x}W_2|\\
& \leq  C_3 \ep^{-2/\gamma} \int_0^t \int_{\R } \p_x \ve  | \partial_xV_2|\; | \partial^2_{x}W_2|\\
& \leq  \dfrac{C_2^{-1}}{4}\int_0^t\int_{\R}\partial_x \ve |\partial^2_{x}W_2|^2 + C_3^2 C_2 \ep^{-4/\gamma} \int_0^tD_0(s; W_2,V_2)ds,
\end{align*}
for some constant $C_3$ depending only on $\mu, v_+$ and $\gamma$. On the other hand
\begin{align*}
& 2 \left| \int_0^t \int_{\R } \mu \p_x \left(\partial_x\left(\frac{\p_x \ve}{\ve^2}\right)\p_x V_2\right)\p^2_{x} V_2\right|\\
& \leq 2 \mu   \int_0^t \int_{\R } \left|\partial_x\left(\frac{\p_x \ve}{\ve^2}\right)\right| \; |\p_x V_2| \; | \partial^3_{x}V_2|\\
& \leq  \dfrac{C_2^{-1}}{4}\int_0^t\int_{\R}| \partial^3_{x}V_2|^2 + C \ep^{-2/\gamma}  \int_0^tD_0(s; W_2,V_2)ds.
\end{align*}
We now address the nonlinear terms.
From Lemma~\ref{lem:F-ep_G-ep}, we have, concerning the remainder involving $F_\ep$,
\begin{align*}
&\int_0^t\left|\int_\R\dfrac{1}{p_\ep'(\ve)} \partial^2_{x} F_\ep(\p_x V_1)  \partial^2_{x}W_2\right|  \\
&\leq C \ep^{-1/\gamma} \int_0^t \int_{\R} \p_{x} \ve \frac{(\p_x V_1)^2}{(\ve-1)^2} |\p^2_{x} W_2|\\
& \quad + C\int_0^t \int_{\R} \frac{(\p^2_{x} V_1)^2}{\ve-1} |\p^2_{x} W_2|\\
& \quad + C \int_0^t \int_{\R} \frac{1}{\ve-1}  | \p_{x} V_1|\; | \p_{x}^3 V_1|\;  |\p^2_{x} W_2|.	
\end{align*}
Using the inequalities \eqref{est:v-infty}-\eqref{est:dxV-infty} together with classical Sobolev embeddings, we infer that the remainder involving $F_\ep$ is bounded by
\begin{align*}
&C\ep^{-3/\gamma}\left(	\int_0^t D_2(s; W_2, V_2) ds\right)^{1/2}\| \p_x V_1\|_{L^\infty_{t,x}} \| \p_x V_1\|_{L^2_{t,x}} \\
& \quad + C \ep^{-1/\gamma} \int_0^t\left( \|\p_x^2 V_1\|_{L^4_x} ^2 + \|\p_x V_1\|_{L^\infty_x} \|\p_x^3 V_1\|_{L^2_x}\right) \|\p_x^2 W_2\|_{L^2_x}\\
&\leq  C \ep^{-3/\gamma} \times \ep^{-2/\gamma} \| (W_2, V_2)\|_{\cX} \times  \ep^{-\frac{3}{2\gamma} } \| (W_1, V_1)\|_{\cX}^2\\
& \quad  + C  \ep^{-1/\gamma} \| \p_x^2W_2\|_{L^\infty_t(L^2_x)} \left( \|\p_x^2 V_1\|_{L^2_{t,x}}^{3/2}\| \p_x^3 V_1\|_{L^2_{t,x}}^{1/2} + \|\p_x V_1\|_{L^2_{t,x}}^{1/2} \| \p_x^2 V_1\|_{L^2_{t,x}}^{1/2} \|\p_x^3 V_1\|_{L^2_{t,x}}\right)\\
&\leq  C \ep^{-\frac{13}{2\gamma}}  \| (W_2, V_2)\|_{\cX} \| (W_1, V_1)\|_{\cX}^2 \\
& \leq C \delta^2 \ep^{-\frac{3}{2\gamma}}   \| (W_2, V_2)\|_{\cX}. 
\end{align*}

Now we deal with the integral coming from $H_\ep$, namely
\begin{align*}
& \mu \int_0^t \int_\R {|\partial^2_{x}H_\ep(\partial_x V_1)| |\partial^3_{x}V_2|} \\
& \leq  C \int_0^t \int_\R{|\partial_x V_1| | \p_x^3 V_1|\;  |\partial^3_{x}V_2|} 
+ C \int_0^t \int_\R{\big(|\partial_x V_1|+|\partial^2_{x}V_1|\big)|\partial^2_{x}V_1|  |\partial^3_{x}V_2|} \\
& \quad + C \int_0^t \int_\R{\big(1 + |\partial^2_{x}\ve|\big)|\partial_x V_1|^2|\partial^3_{x}V_2|} \\
& \leq  C \|\p_x^3 V_2\|_{L^2_{t,x}} \Big(\|\p_x V_1\|_{L^\infty_{t,x}} (\|\p_x^3 V_1\|_{L^2_{t,x}} +\|\p_x^2 V_1\|_{L^2_{t,x}}  + \ep^{-1/\gamma} \|\p_x V_1\|_{L^2_{t,x}} ) \\
& \hspace{2cm} + \| \p_x^2 V_1\|_{L^2_{t,x}}^{3/2} \| \p_x^3 V_1\|_{L^2_{t,x}}^{1/2}   \Big)\\
& \leq C \ep^{-\frac{11}{2\gamma}} \|(W_1, V_1)\|_{\cX}^2 \|(W_2, V_2)\|_{\cX} \\
& \leq C \delta^2 \ep^{-\frac{1}{2\gamma}}  \|(W_2, V_2)\|_{\cX}.
\end{align*}
Gathering all the terms, we obtain, for all $t\geq 0$,
\begin{align*}
& E_2(t;W_2,V_2) + C_2^{-1}\int_0^t{D_2(s;W_2,V_2)\ ds}\\
& \leq E_2(0;W_0,V_0)+ \frac{C_2^{-1}}{2}\int_0^t{D_2(s;W_2,V_2)\ ds} + 84 C_1 C_2 \ep^{-2/\gamma} \int_0^t D_1(s; W_2, V_2)\:ds\\
& \quad + (C_3^2 C_2 \ep^{-4/\gamma} + C \ep^{-2\gamma}) \int_0^t D_0(s; W_2, V_2)\:ds
+ C \delta^2 \ep^{-\frac{3}{2\gamma}} \|(W_2, V_2)\|_{\cX}.
\end{align*}
Therefore, for $\ep$ small enough and recalling that $C_2\geq 1$,
\begin{align*}
&\sup_{t\geq 0}\left[E_2(t;W_2,V_2) + \int_0^t{D_2(s;W_2,V_2)\ ds}\right]\\
& \leq  2 C_2E_2(0;W_0,V_0)+  C \delta^2 \ep^{-\frac{3}{2\gamma}}\|(W_2, V_2)\|_{\cX}\\
&\quad + 128 C_1 C_2^2 \ep^{-2/\gamma} \int_0^t D_1(s; W_2, V_2)\:ds+ 4 C_3^2 C_2^2 \ep^{-4/\gamma} \int_0^t D_0(s; W_2, V_2)\:ds.
\end{align*}
Now, choose $c_0=\frac{1}{2}\min( (128C_1C_2^2)^{-1}, (4 C_3^2 C_2^2 )^{-1/2})$. If $c\leq c_0$, using \eqref{eq:k=1}, we obtain
\begin{align}\label{eq:k=2}
&\sup_{t\geq 0} \sum_{k=0}^2 c^k \ep^{2k/\gamma} \left[E_k(t;W_2,V_2) + \int_0^t{D_k(s;W_2,V_2)\ ds}\right]\nonumber\\
& \leq  C  \sum_{k=0}^2  \ep^{2k/\gamma}E_k(0;W_0,V_0) + C \delta^2 \ep^{\frac{5}{2\gamma}}\|(W_2, V_2)\|_{\cX}. 
\end{align}
Recalling the definition of the $\|\cdot\|_{\cX}$ norm and using  Young's inequality, we infer that
\[
\|(W_2, V_2)\|_{\cX}^2\leq C_4  \sum_{k=0}^2  \ep^{2k/\gamma}E_k(0;W_0,V_0) + C\delta^4 \ep^{\frac{5}{\gamma}},
\]
where the constant $C_4$ depends only on $\gamma, v_+$ and $\mu$. 
Hence, if initially 
\[
E_0(0; W_0 ,V_0) + \ep^{2/\gamma}E_1(0; W_0, V_0) + \ep^{4/\gamma} E_2(0;W_0,V_0)
< \delta_0^2 \ep^{\frac{5}{\gamma}}
\]
with $\delta_0, \delta$ such that $C_4 \delta_0^2 + C\delta^4 < \delta^2 <1$,
we ensure by~\eqref{eq:k=2} that 
\begin{equation}
\|(W_2,V_2)\|_{\mathcal{X}}^2 \leq \delta^2 \ep^{\frac{5}{\gamma}}.
\end{equation}
Therefore the ball $B_{\delta}$ is stable by $\mathcal A^\ep$.

\subsubsection*{$\mathcal A_\ep$ is a contraction}

Consider $(W_1,V_1)\in B_{\delta}$, $(W_1',V_1')\in B_{\delta}$, and the associated solutions $(W_2,V_2)=\mathcal A_\ep (W_1,V_1)$, $(W_2',V_2')=\mathcal A_\ep (W_1',V_1')$. Then $(W_2-W_2', V_2-V_2')$ is a solution of \eqref{eq:generic-lin} with the source term $S_f=F_\ep(\p_x V_1) - F_\ep(\p_x V_1')$, $S_g=\mu \p_x \left[H_\ep(\p_x V_1) - H_\ep(\p_x V_1')\right]$. 
The next lemma provides bounds on the new source terms. 

\begin{lemma}
	For all $f_1, f_2\in H^2_{\text{loc}}(\R)$ such that $|f_1|+ |f_2|\leq \frac{\ep^{1/\gamma}}{2}$,
	\begin{align*}
	| F_\ep(f_1) - F_\ep(f_2)| 
	&\leq C \frac{p_\ep(\ve)}{(\ve-1)^2}|f_1-f_2| (|f_1| + |f_2|)\\
	|\partial_x (F_\ep(f_1) - F_\ep(f_2))|
	& \leq C \frac{p_\ep(\ve)}{(\ve-1)^2} \Bigg[\frac{\p_x \ve}{\ve-1}|f_1-f_2| (|f_1| + |f_2|) \\
	& \hspace{2cm} + |\p_x (f_1 - f_2)| \: |f_1| + | \p_x f_2| \; |f_1-f_2|\Bigg]\\
	|\partial_x^2  (F_\ep(f_1) -  F_\ep(f_2))|&\leq  C \frac{p_\ep(\ve)}{(\ve-1)^2} \Bigg[\frac{\ep^{-1/\gamma}\p_x \ve}{\ve-1}|f_1-f_2| (|f_1| + |f_2|)  \\
	& \hspace{2cm} + \frac{1}{\ve-1} \big(|\partial_x(f_1-f_2)||f_1| + |f_1 - f_2| |\partial_x f_1|\big) \\
	& \hspace{2cm} + |\p_x (f_1 - f_2)|\;  (|\p_x f_1| +|\p_x f_2|)  + |f_1-f_2|\; |\p_x^2 f_1| \\
	& \hspace{2cm} + |\p_x^2 (f_1-f_2)| \; |f_2| + \frac{1}{\ve-1}(\p_x f_2)^2|f_1-f_2|
	\Bigg]
	\end{align*}
	and
	\begin{align*}
	|H_\ep(f_1) - H_\ep(f_2)| &\leq  C |f_1-f_2| (|f_1| + |f_2|)\\
	|\p_x (H_\ep(f_1) - H_\ep(f_2))|&\leq  C \left[ | f_1|\; |\p_x (f_1-f_2)| + |f_1|\; |f_1-f_2| + (| \p_x f_2| + |f_2|) |f_1-f_2| \right]\\
	|\p_x^2  (H_\ep(f_1) - H_\ep(f_2))|&\leq C |f_1| ((1+ |\p_x^2 \ve|)  |f_1-f_2| + |\p_x (f_1 - f_2)| + | \p_x^2 (f_1-f_2)| )\\
	& \quad + C  ((1+ |\p_x^2 \ve|)  |f_2| + |\p_x f_2| + | \p_x^2 f_2| )|f_1-f_2|  \\
	& \quad + C  |\p_x (f_1 - f_2)|\;  (|\p_x f_1| +|\p_x f_2|)+ C (|f_2|^2 + |\p_x f_2|^2) |f_1-f_2|.
	\end{align*}
	\label{lem:diff-F-ep_G-ep}
\end{lemma}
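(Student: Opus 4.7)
The plan is to proceed exactly as in the proof of Lemma~\ref{lem:F-ep_G-ep} (which I assume is established by Taylor expansion with integral remainder), except that every quantity must now be expressed as an integral in an auxiliary parameter $\tau\in[0,1]$ interpolating between $f_2$ and $f_1$. The starting point is the formulas
\[
F_\ep(f_1)-F_\ep(f_2)=-(f_1-f_2)\int_0^1\!\!\bigl[p'_\ep(\ve+f_2+\tau(f_1-f_2))-p'_\ep(\ve)\bigr]\,d\tau
\]
and, writing $h(y):=\ln(1+y)-y$ so that $H_\ep(f)=h(f/\ve)$,
\[
H_\ep(f_1)-H_\ep(f_2)=\frac{f_1-f_2}{\ve}\int_0^1 h'\!\left(\frac{f_2+\tau(f_1-f_2)}{\ve}\right)d\tau.
\]
Applying one more Taylor expansion to the bracket in the first formula (since $h'(0)=0$ and $p'_\ep$ is smooth away from $1$) produces a double integral involving $p''_\ep$ evaluated at intermediate points between $\ve$ and $\ve+f_i$, and similarly for $H_\ep$ with $h''$. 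Under the assumption $|f_1|+|f_2|\leq \ep^{1/\gamma}/2$, those intermediate points $\tilde v$ satisfy $\tilde v-1\geq (\ve-1)/2$, so that $|p''_\ep(\tilde v)|\leq C p_\ep(\ve)/(\ve-1)^2$ and $|h''(f/\ve)|\leq C$; this yields the $L^\infty$ bounds on $F_\ep(f_1)-F_\ep(f_2)$ and $H_\ep(f_1)-H_\ep(f_2)$ directly.

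For the first derivative, I would differentiate the integral formulas, noting that $\p_x$ produces three types of contributions: (i) a term proportional to $\p_x \ve$ times a higher derivative of $p_\ep$ (which, using the same intermediate-point bound, gives the $\p_x \ve/(\ve-1)$ factor in the stated estimates), (ii) a term in which $\p_x$ falls on $f_1-f_2$, and (iii) a term in which $\p_x$ falls on the interpolating argument $f_2+\tau(f_1-f_2)$, contributing $\p_x f_1$ or $\p_x f_2$. Collecting these three types symmetrically between $f_1$ and $f_2$ (one may symmetrize at will, and the statement is written in an asymmetric form that can be obtained by keeping $f_1$ on one side of the difference and $f_2$ on the other) produces exactly the three summands in the bound on $|\p_x(F_\ep(f_1)-F_\ep(f_2))|$ and $|\p_x(H_\ep(f_1)-H_\ep(f_2))|$.

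For the second derivative, the same differentiation is iterated. The potentially singular term comes from $\p_x^2 p_\ep(\tilde v)\sim (\p_x\ve)^2 p'''_\ep(\tilde v)+\p_x^2\ve\, p''_\ep(\tilde v)$, and here one invokes the pointwise bound \eqref{eq:d2v_ep}, i.e.\ $|\p_x^2 \ve|\leq C\ep^{-1/\gamma}\p_x\ve$, which is precisely what produces the prefactor $\ep^{-1/\gamma}\p_x\ve/(\ve-1)$ in the first line of the $\p_x^2(F_\ep(f_1)-F_\ep(f_2))$ bound. For $H_\ep$, the analogous bound already involves $\p_x^2\ve$ directly (as in Lemma~\ref{lem:F-ep_G-ep}). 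Every remaining term is a standard product of derivatives of $f_1-f_2$ with either $f_i$, $\p_x f_i$ or $\p_x^2 f_i$, multiplied by a bounded function of $\ve$ (possibly divided by $\ve-1$). Carefully symmetrising the Taylor remainders so that $f_1-f_2$ appears linearly in every term and the ``other'' factor involves either $f_1,\p_x f_1,\p_x^2 f_1$ or $f_2,\p_x f_2,\p_x^2 f_2$ finally reproduces the precise form of the statement.

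The main obstacle is purely bookkeeping: making sure that every term on the right-hand side is accounted for when one differentiates a triple-indexed integral (over two Taylor parameters and in $x$), and that the asymmetry between $f_1$ and $f_2$ visible in the stated inequalities is respected. A convenient way to avoid mistakes is to write $F_\ep(f_1)-F_\ep(f_2)=-(f_1-f_2)\int_0^1\int_0^1 (f_2+\tau(f_1-f_2))\,p''_\ep(\ve+s(f_2+\tau(f_1-f_2)))\,ds\,d\tau$ and then treat the $f_2$ part and the $\tau(f_1-f_2)$ part of the bracket separately, which produces exactly the stated grouping with one copy of $\p_x f_1$ (or $\p_x f_2$) in each product.
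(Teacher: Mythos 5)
Your approach is essentially the one the paper uses: direct differentiation combined with Taylor bounds, with the key ingredient being that the smallness assumption $|f_1|+|f_2|\le \ep^{1/\gamma}/2\le(\ve-1)/2$ keeps all intermediate points in the interval where $p_\ep^{(k)}$ is comparable to $p_\ep^{(k)}(\ve)$, and then \eqref{eq:d2v_ep} is invoked to turn the $\p_x^2\ve$ factor into $\ep^{-1/\gamma}\p_x\ve$. Your integral--remainder formalization of Taylor's theorem (the one- and two-parameter integral formulas for $F_\ep(f_1)-F_\ep(f_2)$ and $H_\ep(f_1)-H_\ep(f_2)$) is a clean, systematic alternative to what the paper does, which is to compute $\p_x^k F_\ep(f_i)$ explicitly (reusing the identities from Lemma~\ref{lem:F-ep_G-ep}), subtract, and split the resulting differences term by term. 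The paper is terse on this point -- it declares the $k=0,1$ cases ``left to the reader'' and only isolates the single genuinely new term $(\p_x f_2)^2 p_\ep''(\ve+f_2)-(\p_x f_1)^2 p_\ep''(\ve+f_1)$ at order $k=2$, which it decomposes as $(\p_x f_2)^2[p_\ep''(\ve+f_2)-p_\ep''(\ve+f_1)]+\p_x(f_2-f_1)(\p_x f_1+\p_x f_2)p_\ep''(\ve+f_1)$; in your framework that same term arises from $\p_x$ hitting the interpolating argument $\ve+s g_\tau$ twice, and the $(\p_x f_2)^2|f_1-f_2|/(\ve-1)^3$ contribution appears (indeed with the extra small factor $(|f_1|+|f_2|)/(\ve-1)$, so your bound is even a bit sharper than stated). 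One caution: as you note, the stated inequalities are asymmetric in $f_1$ and $f_2$ (for instance $|\p_x(f_1-f_2)||f_1|+|\p_x f_2||f_1-f_2|$ rather than the symmetric version). To reproduce that exact grouping you must make the specific algebraic split $-\p_x f_1 A_1+\p_x f_2 A_2=-\p_x(f_1-f_2)A_1-\p_x f_2(A_1-A_2)$ rather than symmetrize, which your remark about ``keeping $f_1$ on one side and $f_2$ on the other'' correctly anticipates, so the sketch is sound.
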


We postpone the proof of the lemma to Paragraph \ref{ssec:F-ep_G-ep}.
Using these estimates, the control of $E_k(t; W_2-W_2', V_2-V_2')$ for $k=0,1$ follows the same lines as the one of  $E_k(t; W_2, V_2)$ above. In particular, since $(W_2-W_2', V_2-V_2')_{|t=0}=0$, we find that for $c\leq c_0\leq (32C_1 C_2^2)^{-1}$,
\begin{multline*}
\sup_{t\in \R_+} \sum_{k=0}^1c^k\ep^{2k/\gamma}   \left[E_k(t; W_2-W_2', V_2-V_2') + \int_0^t D_k(s;   W_2-W_2', V_2-V_2') \:ds\right]\\
\leq C \delta \|(W_1-W_1', V_1-V_1')\|_{\cX}   \|(W_2-W_2', V_2-V_2')\|_{\cX}.
\end{multline*}
However, concerning the estimate for $k=2$, there is a difference, stemming from the term
\begin{align*}
C\frac{p_\ep(\ve)}{(\ve-1)^3 }(\p_x^2 V_1')^2|\p_x V_1 - \p_x V_1'|\\
(\text{resp. } C((\p_x V_1')^2 +  (\p_x^2 V_1')^2 ) |\p_x V_1 - \p_x V_1'|)
\end{align*}
coming from $\p_x^2(F_\ep (\p_x V_1)-  F_\ep (\p_x V_1'))$ (resp. from $\p_x^2(H_\ep (\p_x V_1)-  H_\ep (\p_x V_1'))$), see Lemma \ref{lem:diff-F-ep_G-ep}. As a consequence, following the estimates of the case $k=2$ above, we find that for all $t\geq 0$,
\begin{align*}
&E_2(t; W_2-W_2', V_2-V_2') + C_2^{-1} \int_0^t D_2(s;   W_2-W_2', V_2-V_2')\:ds\\
&\leq \frac{C_2^{-1}}{2}\int_0^t{D_2(s;W_2-W_2',V_2-V_2')\ ds} + 64 C_1 C_2 \ep^{-2/\gamma} \int_0^t D_1(s; W_2-W_2',V_2-V_2')\:ds\\
& \quad +(C_3^2 C_2 \ep^{-4/\gamma} + C \ep^{-2\gamma}) \int_0^t D_0(s; W_2-W_2',V_2-V_2')\:ds\\
& \quad + C \delta \ep^{ - \frac{4}{\gamma}} \|(W_1-W_1', V_1-V_1')\|_{\cX}  \|(W_2, V_2)\|_{\cX}\\
& \quad + C \int_0^t\int_{\R} \frac{1}{(\ve-1)^2} (\p_x^2 V_1')^2|\p_x V_1 - \p_x V_1'|\; | \p_x^2 (W_2-W_2')|\\
& \quad + C \int_0^t\int_{\R}((\p_x V_1')^2 +  (\p_x^2 V_1')^2 ) |\p_x V_1 - \p_x V_1'|\; | \p_x^3 (V_2-V_2')| . 
\end{align*}
The first additional nonlinear term is bounded as follows, using \eqref{est:v-infty}-\eqref{est:dxV-infty}
\begin{align*}
& \int_0^t\int_{\R} \frac{1}{(\ve-1)^2} (\p_x^2 V_1')^2|\p_x V_1 - \p_x V_1'|\; | \p_x^2 (W_2-W_2')|\\
&\leq  C \ep^{-2/\gamma} \|\p_x V_1 - \p_x V_1'\|_{L^\infty_{t,x}} \| \p_x^2 (W_2-W_2')\|_{L^\infty_t(L^2_x)} \|\p_x^2  V_1'\|_{L^2_t(L^4_x)}^2\\
&\leq  C \ep^{-2/\gamma} \|\p_x V_1 - \p_x V_1'\|_{L^\infty_{t,x}} \| \p_x^2 (W_2-W_2')\|_{L^\infty_t(L^2_x)}\|\p_x^2 V_1'\|_{L^2_{t,x}}^{3/2}  \|\p_x^3 V_1'\|_{L^2_{t,x}}^{1/2}  \\
&\leq  C \ep^{-\frac{17}{2\gamma}} \|(W_1-W_1', V_1-V_1')\|_{\cX}\|(W_2-W_2', V_2-V_2')\|_{\cX}\|(W_1', V_1')\|_{\cX}^2\\
&\leq  C \delta^2 \ep^{-\frac{7}{2\gamma}}  \|(W_1-W_1', V_1-V_1')\|_{\cX}\|(W_2-W_2', V_2-V_2')\|_{\cX}.
\end{align*}
For the second additional nonlinear term, we have in a similar way
\begin{align*}
&\int_0^t\int_{\R}((\p_x V_1')^2 +  (\p_x^2 V_1')^2 ) |\p_x V_1 - \p_x V_1'|\; | \p_x^3 (V_2-V_2')|\\
&\leq  C \|\p_x V_1 - \p_x V_1'\|_{L^\infty_{t,x}} \|\p_x^3 (V_2 -  V_2')\|_{L^2_{t,x}} (\| \p_x V_1'\|_{L^4_{t,x}}^2 +   \| \p_x^2 V_1'\|_{L^4_{t,x}}^2)\\
&\leq  C \|\p_x V_1 - \p_x V_1'\|_{L^\infty_{t,x}} \|\p_x^3 (V_2 -  V_2')\|_{L^2_{t,x}} \| \p_x V_1'\|_{L^2_{t,x}}^{1/2} \| \p_x V_1'\|_{L^\infty_t (L^2_x )} \| \p_x^2 V_1'\|_{L^2_{t,x}}^{1/2} \\
& \quad +   C \|\p_x V_1 - \p_x V_1'\|_{L^\infty_{t,x}} \|\p_x^3 (V_2 -  V_2')\|_{L^2_{t,x}}\| \p_x^2 V_1'\|_{L^2_{t,x}}^{1/2} \| \p_x^2 V_1'\|_{L^\infty_t (L^2_x )} \| \p_x^3 V_1'\|_{L^2_{t,x}}^{1/2} \\
&\leq  C \ep^{-7/\gamma} \|(W_1-W_1', V_1-V_1')\|_{\cX}\|(W_2-W_2', V_2-V_2')\|_{\cX}\|(W_1', V_1')\|_{\cX}^2\\
&\leq  C \delta^2 \ep^{- \frac{2}{\gamma}} \|(W_1-W_1', V_1-V_1')\|_{\cX}\|(W_2-W_2', V_2-V_2')\|_{\cX}.
\end{align*}

As a consequence, gathering all the terms, we infer that
\[
\|(W_2-W_2', V_2-V_2')\|_{\cX} 
\leq C \delta \|(W_1-W_1', V_1-V_1')\|_{\cX},
\]
and therefore, $\mathcal A_\ep$ is a contraction on $B_{\delta}$ for $\delta < \delta^*$ small enough. 
This concludes the proof of Proposition \ref{prop:fixed-point}.

\medskip

\section{Asymptotic stability of the profiles $(\ue,\ve)$}{\label{sec:stability}}

\medskip

Our goal in this paragraph is to prove the existence and uniqueness of solutions of the original system \eqref{eq:NS-ep} (rather than the integrated system \eqref{eq:NL-VW}), and to investigate their long-time behavior. At this stage, we have proved the following:

\medskip
\begin{itemize}
	\item If $(u,v)$ is a smooth solution of \eqref{eq:NS-ep} such that $v-\ve, w-\we\in L^1_0(\R)$ for all times, then we can write system \eqref{eq:NL-VW} for the integrated quantities $(W,V)$;
	
	\medskip
	\item If the initial energy of the system is small enough, there exists a unique strong solution of \eqref{eq:NL-VW} (see Proposition \ref{prop:fixed-point}).
	
\end{itemize}
Therefore, our strategy is as follows: we start from the unique solution of \eqref{eq:NL-VW}. Under additional assumptions on the initial data, we derive bounds on $u-\ue, v-\ve$. In particular, we prove that if initially $(u-\ue)_{|t=0}, (v-\ve)_{|t=0}, (w-\we)_{|t=0}\in L^1_0$, this property remains true for all times. These local $L^1$ bounds  rely on arguments similar to the ones used by Haspot in \cite{haspot2018}. This justifies the equivalence between the original system \eqref{eq:NS-ep} and the integrated system \eqref{eq:NL-VW}. Eventually, we prove that $(u-\ue)(t)\to 0$ and $(v-\ve)(t)\to 0$ as $t\to \infty$ in $L^2\cap L^\infty(\R)$.

\medskip

\subsection*{Initial perturbations}
Let $u_0, \ v_0$ satisfy the hypotheses of Theorem \ref{thm:estimates} and introduce the integrated quantity $U_0$ such that $\partial_x U_0(\cdot) = u_0(\cdot)-\ue(0,\cdot)$.
We have then (recall that $v_{\ep|t=0}=\fv_\ep$)
\[
W_0 = U_0 - \mu \dfrac{\partial_x V_0}{\fv_\ep} - 
\mu \left[\ln\left(1+\dfrac{\partial_x V_0}{\fv_\ep}\right) - \dfrac{\partial_x V_0}{\fv_\ep}\right].
\]
By assumption $(U_0,V_0)\in H^2(\R)\times H^2(\R)$ and condition~\eqref{eq:init-1} in Theorem~\ref{thm:WV} is fulfilled, that is 
\[
\sum_{k=0}^2 \ep^{\frac{2k}{\gamma}}\int_{\R}{\left[ \dfrac{|\partial^k_x W_0|^2}{-p'_\ep(\fv_\ep)} + |\partial^k_x V_0|^2 \right]} \leq \delta_0^2 \ep^{\frac{5}{\gamma}}.
\]
Moreover, since $V_0 \in H^3(\R)$, we have
\begin{align}
\|(u-\ue)(0)\|_{L^2_x}
& = \left\|\partial_x W_0 + \mu \partial_x\left(\dfrac{\partial_x V_0}{\fv_\ep}\right)
+ \partial_x H_\ep(\partial_x V_0) \right\|_{L^2_x} \nonumber\\
& \leq \|\partial_x W_0\|_{L^2_x} + C (\|\partial^2_{x} V_0\|_{L^2_x} + \|\partial_{x} V_0\|_{L^2_x}) + \|\partial_x H_\ep(\partial_x V_0)\|_{L^2_x} \nonumber\\
& \leq C\Big[\|\partial_x W_0\|_{L^2_x} + \|\partial^2_{x} V_0\|_{L^2_x} + \|\partial_{x} V_0\|_{L^2_x} \nonumber\\
& \hspace{2cm} + \|\partial_x V_0\|_{L^\infty_x} 
\big(\|\partial_x V_0\|_{L^2_x} + \|\partial^2_{x} V_0\|_{L^2_x}\big) \Big] \nonumber \\
& \leq C \delta_0 \ep^{\frac{1}{2\gamma}},
\end{align}
and
\begin{align}
\|\partial_x(u-\ue)(0)\|_{L^2_x}
& \leq \|\partial^2_{x} W_0\|_{L^2_x} 
+ C\|\partial^2_{x} H_\ep(\partial_x V_0)\|_{L^2_x} \nonumber\\
& \quad + C (\|\partial^3_{x} V_0\|_{L^2_x} + \|\partial^2_{x} V_0\|_{L^2_x} + \ep^{-1/\gamma}\|\partial_x V_0\|_{L^2_x}) \nonumber \\
& \leq \|\partial^2_{x} W_0\|_{L^2_x} + C\big(1+\|\partial_x V_0\|_{L^\infty_x}\big)\|\partial^3_{x} V_0\|_{L^2_x} 
+ C\|\partial^2_{x} V_0\|_{L^4_x}^{2} \nonumber\\
& \quad + C\ep^{-1/\gamma}\|\partial_x V_0\|_{L^\infty_x}\|\partial_x V_0\|_{L^2_x} \nonumber \\
& \leq \|\partial^2_{x} W_0\|_{L^2_x} + C\big(1+\|\partial_x V_0\|_{L^\infty_x}\big)\|\partial^3_{x} V_0\|_{L^2_x} 
+ C\|\partial^2_{x} V_0\|_{L^2_x}^{3/2} \|\partial^3_xV_0\|_{L^2_x}^{1/2} \nonumber\\
& \quad + C\ep^{-1/\gamma}\|\partial_x V_0\|_{L^\infty_x}\|\partial_x V_0\|_{L^2_x} \nonumber \\
& \leq C \delta_0 + C \|\partial^3_{x} V_0\|_{L^2_x}
\end{align}
using the result of Lemma~\ref{lem:F-ep_G-ep}.

\medskip 

\subsection*{Stability of the velocity profile $\ue$}
The perturbation $u-\ue$ satisfies the parabolic equation
\begin{align}
\partial_t(u -\ue) - \mu\partial_x\left(\frac{1}{v}\partial_x(u-\ue)\right) 
& = - \partial_x(p_\ep(v) -p_\ep(\ve)) 
+ \mu \partial_x\left(\left(\frac{1}{v}-\frac{1}{\ve}\right)\partial_x \ue \right),
\label{eq:upertub}
\end{align} 
where $v=\ve + \partial_x V$, and $(W,V)$ is a solution of \eqref{eq:NL-VW}.

\medskip
\begin{lemma}\label{lem:stability-u}
	Assume that initially $(U_0,V_0)\in H^2(\R)\times H^3(\R)$ is such that~\eqref{eq:init-1} is satisfied by the couple $(W_0,V_0)$
	and consider the solution $(W,V) \in B_\delta \subset \cX$ of~\eqref{eq:NL-VW} given by Theorem~\ref{thm:WV}.
	Then there exists a unique regular solution $u-\ue$ to~\eqref{eq:upertub} which is such that
	\begin{align}
	u-\ue \in \mathcal{C}([0,+\infty); H^1(\R)) \cap L^2([0,+\infty),H^2(\R)), \quad 
	\partial_t(u-\ue) \in L^2([0,+\infty)\times \R).
	\end{align}	
	Moreover the following estimate holds
	\begin{equation}\label{eq:L2-estimate-u}
	\sup_{t \in \R_+}\left[ \|(u-\ue)(t)\|_{H^1}^2 + \int_0^t{\|\partial_x(u-\ue)(s)\|_{H^1}^2 \ ds} \right] \leq
	C \big(\|(u-\ue)(0)\|_{H^1}^2 + \delta^2 \ep^{\frac{1}{\gamma}}\big).
	\end{equation}
\end{lemma}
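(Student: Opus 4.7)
The idea is to view \eqref{eq:upertub} as a linear parabolic equation for $u-\ue$ in which the coefficient $v = \ve + \p_x V$ and the source terms are prescribed by the solution $(W,V)$ of \eqref{eq:NL-VW} provided by \cref{thm:WV}. The bound $\|\p_x V\|_{L^\infty_{t,x}}\leq C\delta\ep^{1/\gamma}$ ensures $v>1$ uniformly and $v\leq v_+ + 1$, so that the diffusion operator $\p_x(v^{-1}\p_x\cdot)$ is uniformly parabolic on $[0,\infty)\times\R$. Combined with $(u-\ue)_{|t=0}\in H^1(\R)$, classical linear parabolic theory (for instance a Galerkin scheme with compactness arguments) yields local-in-time existence and uniqueness of a regular solution; it is therefore enough to derive uniform-in-time $H^1$ estimates in order to make this solution global.

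For the $L^2$ estimate I would multiply \eqref{eq:upertub} by $u-\ue$, integrate by parts in space, and use Young's inequality to absorb half of the dissipation $\mu\int |\p_x(u-\ue)|^2/v$ from the source side. The task then reduces to estimating $\|p_\ep(v)-p_\ep(\ve)\|_{L^2_{t,x}}^2$ and $\|(v^{-1}-\ve^{-1})\p_x\ue\|_{L^2_{t,x}}^2$. For the former I would use the decomposition $p_\ep(v)-p_\ep(\ve)=p_\ep'(\ve)\p_x V - F_\ep(\p_x V)$ together with \cref{lem:F-ep_G-ep}, the pointwise bound $|p_\ep'(\ve)|\leq C\ep^{-1/\gamma}$, and the $\cX$-control on $\p_x V$ coming from \cref{thm:WV}. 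For the latter, the pointwise inequality $|v^{-1}-\ve^{-1}|\leq |\p_x V|/(v\,\ve)$ combined with the exponential decay of $\p_x\ue$ established in \cref{sec:profile} suffices.

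For the $H^1$ estimate I would differentiate \eqref{eq:upertub} once in $x$ and test the resulting equation against $\p_x(u-\ue)$ (equivalently, test \eqref{eq:upertub} against $-\p_x^2(u-\ue)$). The principal dissipation is $\mu\int |\p_x^2(u-\ue)|^2/v$, with a commutator proportional to $\p_x v = \p_x\ve + \p_x^2 V$ that is absorbed by Young's inequality. The source terms on the right-hand side unfold as products of derivatives of $V$ up to third order and of $\ue$, each of which is controlled by $\|(W,V)\|_\cX$ through \eqref{estNL}, in particular via $\p_x^3 V \in L^2_{t,x}$. Adding the two energy estimates gives the bound \eqref{eq:L2-estimate-u}, and \eqref{eq:upertub} itself then yields $\p_t(u-\ue)\in L^2_{t,x}$.

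The main obstacle is the careful bookkeeping of $\ep$-weights in the source terms: the singular factor $p_\ep'(\ve)=O(\ep^{-1/\gamma})$ in the congested zone has to be compensated by the small factors already present in the $\cX$-norm of $(W,V)$. The key observation is that $\|\p_x V\|_{L^2_{t,x}}^2\lesssim \|(W,V)\|_\cX^2\lesssim \delta^2\ep^{5/\gamma}$ absorbs the $\ep^{-2/\gamma}$ coming from $|p_\ep'(\ve)|^2$ with room to spare, producing the scaling $\delta^2\ep^{1/\gamma}$ on the right-hand side of \eqref{eq:L2-estimate-u}. Once the $\ep$-balance is made explicit, the uniform-in-time estimates allow us to extend the solution globally without any Gr\"onwall-type argument, following the dissipative strategy developed in \cref{sec:estimates}.
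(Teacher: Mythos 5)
Your proposal follows essentially the same route as the paper: test \eqref{eq:upertub} against $u-\ue$ and against $-\partial_x^2(u-\ue)$, absorb half the dissipation by Young's inequality, control the source terms through the $\cX$-bound on $(W,V)$ from Theorem~\ref{thm:WV}, and read off $\partial_t(u-\ue)\in L^2_{t,x}$ from the equation. One small slip in your $\ep$-bookkeeping: $\ep^{-2/\gamma}\|\partial_x V\|_{L^2_{t,x}}^2\lesssim\delta^2\ep^{3/\gamma}$, which is the $L^2$-level contribution; the limiting scaling $\delta^2\ep^{1/\gamma}$ in \eqref{eq:L2-estimate-u} actually arises from the $H^1$ level, where $\partial_x(p_\ep(v)-p_\ep(\ve))$ brings in a factor $p_\ep''(\ve)\partial_x\ve\sim\ep^{-2/\gamma}$ whose square combines with $\|\partial_x V\|_{L^2_{t,x}}^2\lesssim\delta^2\ep^{5/\gamma}$ to give $\ep^{-4/\gamma}\cdot\ep^{5/\gamma}=\ep^{1/\gamma}$.
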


\medskip
\begin{proof}
	Under the initial condition~\eqref{eq:init-1}, Theorem~\ref{thm:WV} applies and yields the existence of a unique couple $(W,V) \in B_\delta$. 
	For this $V$, we define $v =\ve +\partial_x V$. Then $\inf v \geq 1 + c \ep^{1/\gamma}$ for some  positive constant $c$, and using \eqref{est:dxV-infty}, we also have $\|v\|_{L^\infty(\R_+, W^{1,\infty}(\R))} \leq C$.

	First, we test the equation~\eqref{eq:upertub} against $u-\ue$ to get
	\begin{align*}
	& \int_{\R}{\dfrac{|(u-\ue)(t)|^2}{2}} -  \int_{\R}{\dfrac{|(u-\ue)(0)|^2}{2}}
	+ \mu \int_0^t\int_{\R}{\dfrac{1}{v}|\partial_x(u-\ue)|^2} \\
	& =  \int_0^t\int_{\R}{(p_\ep(v) -p_\ep(\ve)) \partial_x(u-\ue)} 
	- \mu \int_0^t\int_{\R}{\left(\frac{1}{v}-\frac{1}{\ve}\right)\partial_x \ue \ \partial_x(u-\ue)} 
	\end{align*}
	where the right-hand side can be estimated as follows, 	using the relation $\partial_x \ue = -s_\ep \partial_x \ve$ to bound $|\partial_x \ue|$
	\begin{align*}
	\big|{\rm RHS}\big|
	& \leq \dfrac{\mu}{2}\int_0^t\int_{\R}{\dfrac{1}{v}|\partial_x(u-\ue)|^2} 
	+ C \int_0^t\int_{\R}{\big|p_\ep(\ve + \partial_x V) -p_\ep(\ve)\big|^2} \\
	& \quad + C \int_0^t\int_{\R}{\left|\frac{1}{\ve + \partial_x V}-\frac{1}{\ve}\right|^2 \big|\partial_x \ue\big|^2}\\
	& \leq \dfrac{\mu}{2}\int_0^t\int_{\R}{\dfrac{1}{v}|\partial_x(u-\ue)|^2}
	+ C\ep^{-2/\gamma}\|\partial_x V\|_{L^2_{t,x}}^2 + C \|\partial_x V\|_{L^2_{t,x}}^2 \\
	& \leq \dfrac{\mu}{2}\int_0^t\int_{\R}{\dfrac{1}{v}|\partial_x(u-\ue)|^2}
	+ C\delta^2\ep^{3/\gamma} .
	\end{align*}
Therefore we have
	\begin{equation}\label{eq:u-0}
	\sup_{t \in \R_+}\left[ \|(u-\ue)(t)\|_{L^2_x}^2 +\frac{\mu}{2} \int_0^t{\|\partial_x(u-\ue)(s)\|_{L^2_x}^2 \ ds} \right] 
	\leq \|(u-\ue)(0)\|_{L^2_x}^2 + C \delta^2 \ep^{3/\gamma}.
	\end{equation}
	To obtain an estimate at the next order, we test equation~\eqref{eq:upertub} against $-\partial^2_{x}(u-\ue)$:
	\begin{align*}
	& \int_{\R}{\dfrac{|\partial_x(u-\ue)(t)|^2}{2}} -  \int_{\R}{\dfrac{\partial_x|(u-\ue)(0)|^2}{2}}
	+ \mu \int_0^t\int_{\R}{\dfrac{1}{v}|\partial^2_{x}(u-\ue)|^2} \\
	& =  \mu\int_0^t\int_{\R}{\dfrac{\partial_x v}{v^2}\partial_x(u-\ue) \ \partial^2_{x}(u-\ue)}
	+ \int_0^t\int_{\R}{\partial_x(p_\ep(\ve + \partial_x V) -p_\ep(\ve)) \ \partial^2_{x}(u-\ue)} \\
	& \quad - \mu \int_0^t\int_{\R}{\partial_x\left(\left(\frac{1}{\ve + \partial_x V}-\frac{1}{\ve}\right)\partial_x \ue\right)\ \partial^2_{x}(u-\ue)}.
	\end{align*}
	As previously we estimate the right-hand side by means of Cauchy-Schwarz and Young's inequalities
	\begin{align*}
	\big|{\rm RHS}\big|
	& \leq \dfrac{\mu}{2}\int_0^t\int_{\R}{\dfrac{1}{v}|\partial^2_{x}(u-\ue)|^2} 
	+ C \int_0^t\int_{\R}{|\partial_{x}(u-\ue)|^2} \\
	& \quad	+ C \ep^{-4/\gamma}\|\partial_x V\|_{L^2_{t,x}}^2 + C \ep^{-2/\gamma}\|\partial^2_{x}V\|_{L^2_{t,x}}^2 \\
	& \quad	+ C \ep^{-2/\gamma}\|\partial_x V\|_{L^2_{t,x}}^2 + C \|\partial^2_{x}V\|_{L^2_{t,x}}^2 \\
	& \leq \dfrac{\mu}{2}\int_0^t\int_{\R}{\dfrac{1}{v}|\partial^2_{x}(u-\ue)|^2} 
	+ C \int_0^t\int_{\R}{|\partial_{x}(u-\ue)|^2}
	+ C \delta^2 \ep^{1/\gamma}
	\end{align*} 
	using the relation $\partial_x \ue = - s_\ep \partial_x \ve$ to deduce that $|\partial^2_{x}\ue| \leq C \ep^{-1/\gamma}$.  
	Combining this inequality with the previous estimate~\eqref{eq:u-0} we obtain~\eqref{eq:L2-estimate-u}.
	As a consequence, we also deduce from equation~\eqref{eq:upertub} that
	\begin{align*}
	& \|\partial_t (u-\ue)\|_{L^2_{t,x}} \\
	& \leq C\Big(\|\partial^2_{x}(u-\ue)\|_{L^2_{t,x}} + \|\partial_{x}(u-\ue)\|_{L^2_{t,x}}	
	+ \ep^{-2/\gamma}\|\partial_x V\|_{L^2_{t,x}} + \ep^{-1/\gamma}\|\partial^2_x V\|_{L^2_{t,x}}
	\Big) \\
	& \leq C \delta \ep^{\frac{1}{2\gamma}}.
	\end{align*}
	The existence and uniqueness of $u$ derives classically from these a priori estimates. 
\end{proof}

\medskip
\begin{rem}
	Combining equation \eqref{eq:upertub} with (the $x$ derivative of) \eqref{eq:NL-VW}, we infer that the quantity $w-u + \mu \p_x (\ln v)$ is a  solution in the sense of distributions of the parabolic equation
	\[
	\p_t \left(w-u + \mu \p_x (\ln v)\right) - \mu \p_x \left[ \frac{1}{v} \p_x \left(w-u + \mu \p_x (\ln v)\right)\right]=0.
	\]
	Furthermore, by definition of $W_0$, we also have $\left(w-u + \mu \p_x (\ln v)\right)_{|t=0}=0$. As a consequence, 
	\[
	w-u + \mu \p_x (\ln v)=0 \quad \text{for a.e. } t>0, \ x\in \R.
	\]
\end{rem}

\subsection*{$L^1$ estimates}
The previous lemma is based on the existence and uniqueness of a regular $v = \ve + \partial_x V$ and thus on the passage to the integrated quantities $(W,V)$.
Nevertheless, we did not justify the equivalence between the system
\begin{subnumcases}{\label{eq:vw}}
\partial_t(w-\we) + \partial_x(p_\ep(v) - p_\ep(\ve)) = 0 \\
\partial_t(v-\ve) - \partial_x(w-\we) - \mu \partial^2_{x}\ln\frac{v}{\ve} = 0
\end{subnumcases}
and the system~\eqref{eq:NL-VW} satisfied by the integrated quantities.
Initially, we assumed that $(w_0-\we(0),v_0-\ve(0)) \in L^1_0(\R)$ to justify the introduction of $(W_0,V_0)$ (note that the assumptions of Theorem \ref{thm:estimates}, namely $u_0 - \ue(0) \in W^{1,1}_0$ and $v_0-\ve(0) \in W^{2,1}_0\cap H^2$, ensure that $w_0-\we(0)\in L^1_0$). The goal of this paragraph is to prove that this property remains true for all times.
This result relies on a combination of estimates on the both velocities $u-\ue$ and $w-\we$, similar to the estimates in \cite{haspot2018}.

\medskip
\begin{lemma}\label{lem:uw-L1}
	Assume that the conditions of the previous lemma are satisfied. 
	Suppose in addition that
	\[
	u_0-\ue(0) \in L^1_0(\R), \quad v_0-\ve(0) \in W^{1,1}_0(\R).
	\]
	Then for all times $t \geq 0$, $(u-\ue)(t)$ and $(w - \we)(t)$ belong to $L^1_0(\R)$ and
	\begin{align} \label{eq:L^1-uw}
	\|(u-\ue)(t)\|_{L^1_x} + \|(w-\we)(t)\|_{L^1_x}
	\leq C_\ep \Big[\|u_0-\ue(0)\|_{L^1_x} + \|w_0-\we(0)\|_{L^1_x}\Big] \ e^{C_\ep t}
	\end{align}
	where the constant $C_\ep$ tends to $+\infty$ as $\ep \rightarrow 0$.
\end{lemma}

\medskip

\begin{proof}
	The functions $u-\ue$ and $w-\we$ satisfy the equations
	\begin{align}
	&\partial_t(u -\ue) - \mu\partial_x\left(\frac{1}{v}\partial_x(u-\ue)\right) 
	= - \partial_x(p_\ep(v) -p_\ep(\ve)) 
	+ \mu \partial_x\left(\left(\frac{1}{v}-\frac{1}{\ve}\right)\partial_x \ue \right) \label{eq:u},\\
	&\partial_t(w-\we) = - \partial_x(p_\ep(v) - p_\ep(\ve)) . \label{eq:w}
	\end{align}
	For $n> 0$, we introduce $j_n \in \mathcal{C}^2(\R)$ defined by 
	\[
	j_n(z) = \sqrt{z^2 + \frac{1}{n}} - \sqrt{\frac{1}{n}} \quad \forall z\in \R
	\]
	which is a smooth, convex approximation of the function $r \mapsto |r|$ as $n\rightarrow +\infty$.
	Note that $j'_n(z) = z\big(\sqrt{z^2+1/n}\big)^{-1}$ is an approximation of the sign function.
	Testing equations \eqref{eq:u}-\eqref{eq:w} against $j'_n(u-\ue)$ and $j'_n(w-\we)$ respectively, we infer that
	\begin{align*}
	& \int_{\R}{\partial_t\big[j_n(u-\ue) + j_n(w-\we) \big]} 
	+ \mu \int_\R{\frac{1}{v}j''_n(u-\ue)|\partial_x(u-\ue)|^2} \\
	& = - \int_\R{\partial_x(p_\ep(v) -p_\ep(\ve)) \big[j'_n(u-\ue) + j'_n(w-\we)\big]} \\
	& \quad + \mu\int_\R{\partial_x\left(\left(\frac{1}{v}-\frac{1}{\ve}\right)\partial_x \ue \right)j'_n(u-\ue)}.
	\end{align*}
	Since $j''_n > 0$, the second integral of the left-hand side has a positive sign.
	On the other hand, since the profile $(\ve,\ue)$ satisfies $\partial^k_x \ve,\ \partial^k_x \ue \in L^1(\R)$, $k\geq 1$, the right-hand side can be controlled by
	\begin{align*}
	|{\rm RHS}|
	& \leq \int_\R{\big|\partial_x(p_\ep(v) -p_\ep(\ve)\big|}
	+ \mu\int_\R{\left|\partial_x\left(\left(\frac{1}{v}-\frac{1}{\ve}\right)\partial_x \ue \right)\right|}\\
	& \leq \int_\R{\big|p'_\ep(v)\partial_x(v-\ve) + (p'_\ep(v)-p'_\ep(\ve))\partial_x \ve\big|}
	+ \mu\int_{\R}{\left|\partial_x\left(\frac{1}{v}-\frac{1}{\ve}\right)\right| |\partial_x\ue|} \\
	& \quad	+ \mu\int_{\R}{\left|\frac{1}{v}-\frac{1}{\ve}\right| |\partial^2_{x}\ue|} \\
	& \leq  C \ep^{-1/\gamma} \left( \| \p_x v \|_{L^1_x} + \| \p_x v_\ep \|_{L^1_x}\right)\\
	&\quad +
	C \|v-\ve\|_{L^\infty(\R_+, W^{1,\infty}(\R))}\Big(  \ep^{-2/\gamma}\|\partial_x \ve\|_{L^1_x} + \|\partial_{x} \ue\|_{L^1_x}
	+ \|\partial^2_{x} \ue\|_{L^1_x} \Big)
	\end{align*}  
	where
	\begin{equation*}
	\partial_x v = \frac{v}{\ve}\partial_x \ve + \frac{v}{\mu}\big[(u-\ue) -(w-\we)\big]
	\end{equation*}
	so that
	\[
	\|\partial_x v\|_{L^1_x} 
	\leq C\Big(\|\partial_x\ve\|_{L^1_x} + \|u-\ue\|_{L^1_x} + \|w-\we\|_{L^1_x}\Big).
	\]
	Hence
	\begin{align*}
	& \int_{\R}{\big[j_n(u-\ue)(t) + j_n(w-\we)(t) \big]} - \int_{\R}{\big[|u-\ue|(0) + |w-\we|(0) \big]} \\
	& \leq C_\ep\left(\|\partial_x \ve\|_{L^1} +\|\partial^2_{x} \ve\|_{L^1} +\int_0^t \int_{\R}{\big[|u-\ue| + |w-\we|\big] }\right)
	\end{align*}
	where we have used the fact that $j_n(r) \leq |r|$.
	Passing to the limit $n\rightarrow +\infty$ and using Fatou's lemma, we finally obtain~\eqref{eq:L^1-uw} thanks to a Gronwall inequality.
	Since the equations~\eqref{eq:u}-\eqref{eq:w} are conservative, we ensure that
	\[
	\int_{\R}{(u-\ue)(t)} = 0, \quad \int_{\R}{(w-\we)(t)} = 0 \quad \forall t\geq 0.
	\]
\end{proof}

\medskip
Observe that the previous lemma gives $L^1$ bounds on $u-\ue$ and $w-\we$ but not on $v-\ve$.
Since $v-\ve$ satisfies 
\[
\partial_t(v-\ve) -\partial_x(u-\ue) = 0,
\]
the derivation of a $L^1$ estimate requires a control of $\partial_x(u-\ue)$ in $L^1_x$.

\medskip
\begin{lemma}\label{lem:v-L1}
	Assume that the conditions of the previous lemmas are satisfied. 
	Suppose in addition that
	\[
	\partial_x(u_0-\ue(0)) \in L^1(\R), \quad \partial_x(w_0-\we(0)) \in L^1(\R).
	\]
	Then for all times $t \geq 0$, $(v-\ve)(t)$, $\partial_x(u-\ue)(t)$ and $\partial_x(w - \we)(t)$ belong to $L^1_0(\R)$ and
	\begin{align} \label{eq:L^1-px-uw}
	&\|(v-\ve)(t)\| _{L^1_x} + \|(u-\ue)(t)\|_{W^{1,1}_x} + \|(w-\we)(t)\|_{W^{1,1}_x} \nonumber \\
	&\leq C_\ep \Big[\|v_0-\ve(0)\| _{L^1_x}+ \|u_0-\ue(0)\|_{W^{1,1}_x} + \|w_0-\we(0)\|_{W^{1,1}_x} + 1\Big] \ e^{C_\ep t}
	\end{align}
	where the constant $C_\ep$ tends to $+\infty$ as $\ep \rightarrow 0$.	
\end{lemma}

\medskip
\begin{rem}
	The previous estimates \eqref{eq:L^1-uw} and \eqref{eq:L^1-px-uw} are local in time and depend on $\ep$ but in fact, we will never use them in a quantitative fashion. 
	Note that the only point we are interested in is the fact that $u(t, \cdot) - u_\ep(t, \cdot)$ and $v(t, \cdot) - v_\ep(t, \cdot)$ are in $L^1(\R)$ for all $t \geq 0$.
\end{rem}	

\medskip

\begin{proof}
	The proof of this result follows the same lines as before.
	It relies on a combination of $L^1$-estimates for the three following equations
	\begin{align*}
	& \partial_t(v-\ve) = \partial_x(u-\ue) ,\\
	&  \partial_t \partial_x (u -\ue) - \mu\partial_x\left(\frac{1}{v}\partial^2_{x}(u-\ue)\right) \\
	& \quad = - \partial^2_{x}(p_\ep(v) -p_\ep(\ve)) 
	+ \mu \partial^2_{x}\left(\left(\frac{1}{v}-\frac{1}{\ve}\right)\partial_x \ue \right)
	- \mu \partial_x\left(\dfrac{\partial_x v}{v^2} \partial_x(u-\ue)\right) ,\\
	&\partial_t\partial_x(w-\we) = - \partial^2_{x}(p_\ep(v) - p_\ep(\ve)) .
	\end{align*}
	As in the previous proof, the key ingredient is the control of $\partial^2_x v$ in terms of $\partial^k_x\ve,\partial^k_x(u -\ue)$, $\partial^k_x(w-\we)$, $k=0,1$:
	\begin{align*}
	\partial^2_{x} v
	& = \left(\dfrac{\partial_x v }{\ve} - \dfrac{v\partial_x \ve}{\ve^2} \right) \partial_x \ve
	+ \dfrac{v}{\ve}\partial^2_{x}\ve + \dfrac{\partial_x v}{\mu}\big[(u-\ue) -(w-\we)\big] \\
	& \quad + \dfrac{v}{\mu}\big[\partial_x(u-\ue) -\partial_x(w-\we)\big]
	\end{align*}
	and therefore
	\begin{align*}
	\|\partial^2_{x} v\|_{L^1_x}
	& \leq C_\ep \Big(\|\partial_{x} \ve\|_{L^1_x} + \|\partial^2_{x} \ve\|_{L^1_x} 
	+ \|u-\ue\|_{L^1_x} + \|w-\we\|_{L^1_x} \\
	& \qquad + \|\partial_x(u-\ue)\|_{L^1_x} + \|\partial_x(w-\we)\|_{L^1_x} \Big).
	\end{align*}
	Thanks to this bound, we can estimate 
	\begin{align*}
	\partial^2_{x}(p_\ep(v) -p_\ep(\ve))
	& = p'_\ep(v) \partial^2_{x}(v-\ve) + p''_\ep(v)(\partial_xv)^2 - p''_\ep(\ve)(\partial_x\ve)^2 \\
	& \quad	+ (p'_\ep(v) - p'_\ep(\ve))\partial^2_{x}\ve
	\end{align*}
	as follows
	\begin{align*}
	& \|\partial^2_{x}(p_\ep(v) -p_\ep(\ve))\|_{L^1_x} \\
	& \leq C\Big[\ep^{-1/\gamma}\|\partial^2_{x}(v-\ve)\|_{L^1_x} 
	+ \ep^{-2/\gamma}(\|\partial_{x}v\|_{L^1_x} +\|\partial_{x}v_\ep\|_{L^1_x}) 
	+ \ep^{-1/\gamma}\|\partial^2_{x}\ve\|_{L^1_x} \Big]\\
	& \leq C\ep^{-2/\gamma}\Big[ \|\partial_{x} \ve\|_{L^1_x} + \|\partial^2_{x} \ve\|_{L^1_x} 
	+ \|u-\ue\|_{L^1_x} + \|w-\we\|_{L^1_x} \\
	& \hspace{2cm} + \|\partial_x(u-\ue)\|_{L^1_x} + \|\partial_x(w-\we)\|_{L^1_x} \Big].
	\end{align*}

	Furthermore, using  Lemma \ref{lem:stability-u},
	\begin{align*}
&	\left\|  \partial_x\left(\dfrac{\partial_x v}{v^2} \partial_x(u-\ue)\right) \right \|_{L^1((0,t)\times \R_x)}\\
&\leq  C \|\partial_x v\|_{L^2((0,t)\times \R_x)}  \|\partial_x^2 (u-\ue)\|_{L^2((0,t)\times \R_x)}  \\
& \quad + C  \|\partial^2_x v\|_{L^2((0,t)\times \R_x)} \|\partial_x (u-\ue)\|_{L^2((0,t)\times \R_x)} \\
& \quad + C \|\partial_x v\|_{L^2((0,t),L^4(\R_x))}^2 \|\partial_x (u-\ue)\|_{L^\infty((0,t),L^2(\R_x))}  \\
&\leq  C (\| (u-\ue)(0)\|_{H^1} + \delta \ep^{\frac{1}{2\gamma}}) \Bigg[\|\partial_x \ve\|_{L^2((0,t), H^1(\R_x))}  + \|\partial_x^2 V\|_{L^2((0,t), H^1(\R_x))}\\
& \hspace{4cm} +\Big(\|\partial_x \ve\|_{L^2((0,t), L^4(\R_x))} + \|\partial_x^2 V\|_{L^2((0,t), L^4(\R_x))}\Big)^2 \Bigg] \\
&\leq  C_\ep .
	\end{align*}

	Equipped with these estimates we easily deduce~\eqref{eq:L^1-px-uw}.

\end{proof}

\subsection*{Proof of the first part of Theorem \ref{thm:estimates}}
Let us now recap the conclusion of the previous steps. 

Let $(u_0, v_0)$ be an initial data satisfying the assumptions of Theorem \ref{thm:estimates}, and let $U_0, V_0, W_0$ be the associated integrated quantities. Let $(V,W)$ be the solution of \eqref{eq:NL-VW}.
Then according to Lemma \ref{lem:stability-u}, the associated couple $(u,v):=(\partial_x U + u_\ep, \partial_x V + v_\ep)$ is a solution of \eqref{eq:NS-ep} and belongs to $(u_\ep, v_\ep) + \mathcal C([0, \infty), H^1(\R)^2)$, and $v-v_\ep \in L^2(\R_+, H^2)$.
Lemmas \ref{lem:uw-L1} and \ref{lem:v-L1} ensure that for all $t\geq 0$, $(u,v,w)(t)\in (u_\ep, v_\ep, w_\ep) + L^1_0(\R)$.

Conversely, let $(u,v)\in (u_\ep , v_\ep) +  \mathcal C(\R_+, H^1\cap L^1_0(\R))$ be any solution of \eqref{eq:NS-ep} such that $v-v_\ep \in L^2(\R_+, H^2)$, and assume that the initial data $(u_0, v_0)$ satisfies the assumptions of Theorem \ref{thm:estimates}. Define the integrated quantities 
\[
U(t,x):=\int_{-\infty}^{x} (u(t,z) - u_\ep(t,z))dz,\quad V(t,x):=\int_{-\infty}^{x} (v(t,z) - v_\ep(t,z))dz, 
\]
and
\[
W:= U - \mu \frac{\partial_x V}{v_\ep} - \mu \left[\ln \left(1+ \frac{\partial_x V}{v_\ep}\right) - \frac{\partial_x V}{v_\ep}\right].
\]
Then $(V,W)$ is a solution of \eqref{eq:NL-VW}. 
Furthermore, $\partial_x V \in \mathcal C(\R_+, H^1\cap L^1_0)\cap L^2(\R_+, H^2)$ and $\partial_x W\in  \mathcal C(\R_+, H^1\cap L^1_0)$. 
In order to conclude that $(V,W)$ is the unique solution of \eqref{eq:NL-VW} in $B_\delta$, we first need to prove that $(V,W)\in \cX$.  The regularity assumptions on $(u,v)$ ensure that $\partial_t (V,W)\in \mathcal C(\R_+, H^1)$, and therefore $(V,W)\in \mathcal C(\R_+, H^2)$. We infer that $(V,W)\in \cX$. A simple bootstrap argument then ensures that $(V,W)\in B_\delta$, and thus $(V,W)$ is uniquely determined as the fixed point of the application $\mathcal A^\ep$, see Proposition \ref{prop:fixed-point}. The uniqueness of $(u,v)$ follows easily.

As a consequence, we have proved that for any initial data $(u_0, v_0)$ satisfying the assumptions of Theorem \ref{thm:estimates}, there exists a unique solution $(u,v)$ of \eqref{eq:NS-ep} such that
\[
\ba
u-u_\ep \in \mathcal C(\R_+, H^1\cap L^1_0),\\
v-v_\ep  \in \mathcal C(\R_+, H^1\cap L^1_0)\cap L^2(\R_+, H^2).
\ea 
\]

\medskip
\subsection*{Long-time behavior}
We have shown in the previous section that
\[
v-\ve = \partial_x V \in L^2([0,+\infty);H^2(\R)).
\]
Combining this bound with the control of
\[
\partial_t (v-\ve) = \partial_x(u-\ue) \quad \text{in} \quad L^2([0,+\infty);H^1(\R)),
\]
we infer that
\[
\|(v-\ve)(t)\|_{H^1_x} \underset{t\rightarrow +\infty}{\longrightarrow} 0.
\]
As a consequence, we have
\begin{equation}
|(v-\ve)(t,x)| \leq C \|(v -\ve)(t)\|_{L^2_x}^{1/2} \|\partial_x(v -\ve)(t)\|_{L^2_x}^{1/2}  \underset{t\rightarrow +\infty}{\longrightarrow} 0.
\end{equation}
Similarly for $u-\ue$, the bounds obtained in Lemma~\ref{lem:stability-u} yield
\[
\|(u-\ue)(t)\|_{L^2_x} \underset{t\rightarrow +\infty}{\longrightarrow} 0
\]
and therefore
\begin{equation}
|(u-\ue)(t,x)| \leq C \|(u -\ue)(t)\|_{L^2_x}^{1/2} \|\partial_x(u -\ue)\|_{L^\infty L^2_x}^{1/2}  \underset{t\rightarrow +\infty}{\longrightarrow} 0.
\end{equation}

\medskip

\section{Proofs of Lemmas \ref{lem:commutator}, \ref{lem:F-ep_G-ep} and \ref{lem:diff-F-ep_G-ep}}{\label{sec:appendix}}

\subsection{Structure of the commutator.}\label{ssec:commutator}
Let us prove the three properties claimed in Lemma~\ref{lem:commutator}.
A direct calculation gives first
\begin{align*}
[\cLep, \p_x]\begin{pmatrix}f\\g\end{pmatrix}
& = \cLep \begin{pmatrix}\partial_x f\\\partial_x g\end{pmatrix} - \partial_x \left( \cLep \begin{pmatrix} f\\ g\end{pmatrix} \right) \\
& = \begin{pmatrix}p'_\ep(\ve) \partial^2_{x} g \\ -\partial^2_{x}f - \mu\partial_x\left(\frac{\partial^2_{x}g}{\ve}\right) \end{pmatrix}
- \begin{pmatrix}p''_\ep(\ve)\partial_x\ve \partial_{x} g + p'_\ep(\ve) \partial^2_{x} g \\ -\partial^2_{x}f - \mu\partial_x\left(\frac{\partial^2_{x}g}{\ve}\right) + \mu\partial_x\left(\frac{\partial_x \ve}{\ve^2}\partial_{x}g\right) \end{pmatrix} \\
& =  \begin{pmatrix}
- p_\ep''(\ve) \p_x \ve \p_x g\\
-\mu \p_x \left(\frac{\p_x \ve}{\ve^2}\p_x g\right)\end{pmatrix}.
\end{align*}
Next, we have
\begin{align*}
[\cLep,\partial^2_{x}] \begin{pmatrix}f\\g\end{pmatrix}
& = [\cLep,\partial_{x}]\partial_x\begin{pmatrix}f\\g\end{pmatrix} + \partial_x[\cLep,\partial_x]\begin{pmatrix}f\\g\end{pmatrix} \\
& = \begin{pmatrix} -p''_\ep(\ve)\partial_x \ve \partial^2_{x}g \\ - \mu \partial_x \left(\frac{\partial_x \ve}{\ve^2}\partial^2_{x}g\right)\end{pmatrix}
+ \begin{pmatrix} -\partial_x(p''_\ep(\ve)\partial_x \ve \partial_{x}g) \\ - \mu \partial^2_{x} \left(\frac{\partial_x \ve}{\ve^2}\partial_{x}g\right)\end{pmatrix} \\
& = \begin{pmatrix}- 2p_\ep''(\ve) \p_x \ve \partial^2_{x} g\\-2\mu \p_x \left(\frac{\p_x \ve}{\ve^2}\partial^2_{x} g\right)\end{pmatrix}
- \begin{pmatrix} \partial_x( p_\ep''(\ve) \p_x \ve) \p_x g\\ -\mu \p_x \left(\partial_x\left(\frac{\p_x \ve}{\ve^2}\right)\p_x g\right)\end{pmatrix} \\
& = 2 [\cLep, \p_x]\begin{pmatrix}\partial_xf\\\partial_xg\end{pmatrix}
- \begin{pmatrix} \partial_x( p_\ep''(\ve) \p_x \ve) \p_x g\\ -\mu \p_x \left(\partial_x\left(\frac{\p_x \ve}{\ve^2}\right)\p_x g\right)\end{pmatrix}.
\end{align*}
For the third point, 
\begin{align*}
& \int_0^T\int_{\R}{[\cLep, \p_x]\begin{pmatrix}f\\g\end{pmatrix} \cdot \begin{pmatrix}\dfrac{-\partial_xf}{p'_\ep(\ve)} \\ \partial_xg\end{pmatrix}} \\
& = \int_0^T\int_{\R}{\dfrac{p''_\ep(\ve)}{p'_\ep(\ve)}\partial_x \ve \partial_x g\partial_x f}  
- \mu\int_0^T\int_{\R}{\partial_x\left(\dfrac{\partial_x \ve}{\ve^2}\partial_x g\right) \partial_x g}  \\
& = \int_0^T\int_{\R}{\dfrac{p''_\ep(\ve)}{p'_\ep(\ve)}\partial_x \ve \partial_x g\partial_x f}  
- \dfrac{\mu}{2}\int_0^T\int_{\R}{\partial_x\left(\dfrac{\partial_x \ve}{\ve^2}\right) |\partial_x g|^2}  \\
\end{align*}
where the right-hand side can be estimated as follows
\begin{align*}
& \left|\int_0^T\int_{\R}{\dfrac{p''_\ep(\ve)}{p'_\ep(\ve)}\partial_x \ve \partial_x g\partial_x f} \right| 
+ \left|\dfrac{\mu}{2}\int_0^T\int_{\R}{\partial_x\left(\dfrac{\partial_x \ve}{\ve^2}\right) |\partial_x g|^2}\right| \\
& \leq \left\|\frac{p''_\ep(\ve)}{p_\ep'(\ve)} \right\|_\infty \left(\int_0^T\int_{\R}\partial_x \ve |\partial_x f|^2\right)^{1/2} \left(\int_0^T\int_{\R}{|\partial_x g|^2}\right)^{1/2} \\
& \quad + C \left\|\partial_x\left(\dfrac{\partial_x \ve}{\ve^2}\right)\right\|_{L^\infty} \int_0^T\int_{\R}{|\partial_x g|^2} \\
& \leq \delta \int_0^T\int_{\R}\partial_x \ve |\partial_x f|^2 + \frac{C}{\delta }\left( \left\| \frac{1}{\ve-1}\right\|_\infty^2 +\left\|\partial_x\left(\dfrac{\partial_x \ve}{\ve^2}\right)\right\|_{L^\infty}\right) \int_0^T\int_{\R}{|\partial_x g|^2} .
\end{align*}
Using \eqref{est:v-infty} and \eqref{eq:d2v_ep}, we obtain the result announced in Lemma \ref{lem:commutator}.

\subsection{Estimates on the nonlinear terms.}\label{ssec:F-ep_G-ep}

\medskip
\paragraph{Proof of Lemma \ref{lem:F-ep_G-ep}.}
We recall that
\[
F_\ep(f)= - \left[p_\ep (\ve + f) - p_\ep (\ve) - p_\ep'(\ve) f\right],
\]
and that the function $p_\ep$ is $\mathcal C^\infty$ in $]1, + \infty[$. As a consequence, we will extensively use Taylor identities to bound $F_\ep$ and its derivatives. Let us also mention that we will only consider functions $f$ such that $\|f\|_\infty \leq \delta \ep^{1/\gamma}$ for some constant $\delta<1$, so that $|f|\leq \delta (\ve-1)$. As a consequence, for all $k\in \N$ and for all $\delta<1/2$, there exists a constant $C_{k}$ such that
\[
C_{k}^{-1} |p_\ep^{(k)}(\ve)|\leq |p_\ep^{(k)}(\ve + f)|\leq C_{k} |p_\ep^{(k)}(\ve)|.
\]
As a consequence, we infer easily that
\[
|F_\ep(f)|\leq C p_\ep''(\ve) f^2\leq C \frac{p_\ep(\ve)}{(\ve-1)^2 }f^2.
\]
The estimates on $\partial_x^k (F_\ep(f))$ follow from similar arguments after differentiation. We have
\begin{align*}
\partial_x (F_\ep(f))
& = -\partial_x \ve \left[p_\ep' (\ve + f) - p_\ep' (\ve) - p_\ep''(\ve) f\right]\\
& \quad - \partial_x f \left[p_\ep' (\ve + f) - p_\ep' (\ve)\right],
\end{align*}
and therefore
\[
|\partial_x (F_\ep(f))| \leq C \left[\partial_x \ve| p_\ep^{(3)}(\ve)|  f^2 +  p_\ep''(\ve) |f| |\partial_x f|\right].\]
In a similar manner, we have for the second derivative
\begin{align*}
\partial_x^2 (F_\ep(f))
& = -\partial_x^2  \ve \left[p_\ep' (\ve + f) - p_\ep' (\ve) - p_\ep''(\ve) f\right]\\
& \quad - (\partial_x \ve)^2 \left[p_\ep'' (\ve + f) - p_\ep'' (\ve) - p_\ep^{(3)}(\ve) f\right]\\
& \quad - 2\partial_x \ve \partial_x f \left[p_\ep'' (\ve + f) - p_\ep'' (\ve)\right]\\
& \quad - (\partial_x f)^2 p_\ep'' (\ve + f) - \partial_x^2 f \left[p_\ep' (\ve + f) - p_\ep' (\ve)\right].
\end{align*}
As a consequence, using inequalities \eqref{est:v-infty} and  \eqref{eq:d2v_ep}, we obtain
\begin{align*}
| \partial_x^2 (F_\ep(f))| 
& \leq  C \ep^{-1/\gamma} \partial_x \ve |p_\ep^{(3)}(\ve)| f^2\\
& \quad + C \partial_x \ve p_\ep^{(4)}(\ve) f^2\\
& \quad + C \partial_x \ve | p_\ep^{(3)}(\ve)|\; |f|\; |\partial_x f|\\
& \quad + C  p_\ep'' (\ve) (\partial_x f)^2 + C  p_\ep'' (\ve) |f|\; |\partial_x^2 f|.
\end{align*}
Using Young's inequality, we obtain the estimate announced in the lemma. The estimates on $G_\ep$ are similar and are left to the reader.

\paragraph{Proof of Lemma \ref{lem:diff-F-ep_G-ep}.} Once again we focus on $F_\ep$. 
The estimates for $F_\ep(f_1) - F_\ep(f_2)$, $\partial_x (F_\ep(f_1) - F_\ep(f_2))$ go along the same lines as above and are left to the reader. The only novelty in $\partial_x^2 (F_\ep(f_1) - F_\ep(f_2)) $ comes from the term $(\partial_x f_2)^2 p_\ep'' (\ve+ f_2) -  (\partial_x f_1)^2 p_\ep'' (\ve+ f_1)$, for which we write
\begin{align*}
&(\partial_x f_2)^2 p_\ep'' (\ve+ f_2) -  (\partial_x f_1)^2 p_\ep'' (\ve+ f_1) \\
&\quad =(\partial_x f_2)^2 \left[ p_\ep'' (\ve+ f_2)-  p_\ep'' (\ve+ f_1)\right]
+ (\partial_x f_2 - \partial_x f_1) (\partial_x f_2 + \partial_x f_1) p_\ep'' (\ve+ f_1),
\end{align*}
and therefore
\begin{align*}
& \left| (\partial_x f_2)^2 p_\ep'' (\ve+ f_2) -  (\partial_x f_1)^2 p_\ep'' (\ve+ f_1)\right| \\
& \quad \leq C\left ( |p_\ep^{(3)}(\ve) |(\partial_x f_2)^2 |f_1-f_2| + |p_\ep''(\ve) ||\partial_x f_2 - \partial_x f_1|\;  |\partial_x f_2 + \partial_x f_1|\right).
\end{align*}

\section*{Acknowledgements}

This project has received funding from the European Research Council (ERC) under the European Union's Horizon 2020 research and innovation program Grant agreement No. 637653, project BLOC ``Mathematical Study of Boundary Layers in Oceanic Motion''.
C. P. was partially supported by a CNRS PEPS JCJC grant.
This work was  supported by the SingFlows project, grant ANR-18-CE40-0027 of the French National Research Agency (ANR).


\bibliography{biblio}

\begin{thebibliography}{10}

\bibitem{bresch2006}
{\sc Bresch, D., and Desjardins, B.}
\newblock On the construction of approximate solutions for the 2d viscous
  shallow water model and for compressible navier--stokes models.
\newblock {\em Journal de math{\'e}matiques pures et appliqu{\'e}es 86}, 4
  (2006), 362--368.

\bibitem{bresch2003}
{\sc Bresch, D., Desjardins, B., and Lin, C.-K.}
\newblock On some compressible fluid models: {K}orteweg, lubrication, and
  shallow water systems.
\newblock {\em Comm. Partial Differential Equations 28}, 3--4 (2003), 843--868.

\bibitem{bresch2019}
{\sc Bresch, D., Lannes, D., and Metivier, G.}
\newblock Waves interacting with a partially immersed obstacle in the
  boussinesq regime.
\newblock {\em arXiv preprint arXiv:1902.04837\/} (2019).

\bibitem{bresch2014}
{\sc Bresch, D., Perrin, C., and Zatorska, E.}
\newblock Singular limit of a {N}avier--{S}tokes system leading to a
  free/congested zones two-phase model.
\newblock {\em Comptes Rendus Mathematique 352}, 9 (2014), 685--690.

\bibitem{bresch2017}
{\sc Bresch, D., and Renardy, M.}
\newblock Development of congestion in compressible flow with singular
  pressure.
\newblock {\em Asymptotic Analysis 103}, 1-2 (2017), 95--101.

\bibitem{colombo2016}
{\sc Colombo, R., Guerra, G., and Schleper, V.}
\newblock The compressible to incompressible limit of one dimensional euler
  equations: The non smooth case.
\newblock {\em Archive for Rational Mechanics \& Analysis 219}, 2 (2016).

\bibitem{degond2011}
{\sc Degond, P., Hua, J., and Navoret, L.}
\newblock Numerical simulations of the {E}uler system with congestion
  constraint.
\newblock {\em Journal of Computational Physics 230}, 22 (2011), 8057--8088.

\bibitem{denisova2018}
{\sc Denisova, I., and Solonnikov, V.}
\newblock Local and global solvability of free boundary problems for the
  compressible navier--stokes equations near equilibria.
\newblock {\em Handbook of Mathematical Analysis in Mechanics of Viscous
  Fluids\/} (2018), 1--88.

\bibitem{guerra2016}
{\sc Guerra, G., and Schleper, V.}
\newblock A coupling between a 1d compressible-incompressible limit and the 1d
  p-system in the non smooth case.
\newblock {\em Bulletin of the Brazilian Mathematical Society, New Series 47},
  1 (2016), 381--396.

\bibitem{haspot2014}
{\sc Haspot, B.}
\newblock Existence of global strong solution for the compressible
  {N}avier-{S}tokes equations with degenerate viscosity coefficients in 1d.
\newblock {\em arXiv preprint arXiv:1411.5503\/} (2014).

\bibitem{haspot2018}
{\sc Haspot, B.}
\newblock {Vortex solutions for the compressible {N}avier-{S}tokes equations
  with general viscosity coefficients in 1D: regularizing effects or not on the
  density}.
\newblock {\em preprint HAL hal-01716150\/} (2018).

\bibitem{humpherys2010}
{\sc Humpherys, J., Lafitte, O., and Zumbrun, K.}
\newblock Stability of isentropic {N}avier--{S}tokes shocks in the high {M}ach
  number limit.
\newblock {\em Communications in Mathematical Physics 293}, 1 (2010), 1--36.

\bibitem{iguchi2018}
{\sc Iguchi, T., and Lannes, D.}
\newblock Hyperbolic free boundary problems and applications to wave-structure
  interactions.
\newblock {\em arXiv preprint arXiv:1806.07704\/} (2018).

\bibitem{liu2009}
{\sc Liu, T.-P., and Zeng, Y.}
\newblock Time-asymptotic behavior of wave propagation around a viscous shock
  profile.
\newblock {\em Communications in Mathematical Physics 290}, 1 (2009), 23--82.

\bibitem{mascia2004}
{\sc Mascia, C., and Zumbrun, K.}
\newblock Stability of large-amplitude viscous shock profiles of
  hyperbolic-parabolic systems.
\newblock {\em Archive for rational mechanics and analysis 172}, 1 (2004),
  93--131.

\bibitem{matsumura1985}
{\sc Matsumura, A., and Nishihara, K.}
\newblock On the stability of travelling wave solutions of a one-dimensional
  model system for compressible viscous gas.
\newblock {\em Japan Journal of Applied Mathematics 2}, 1 (1985), 17.

\bibitem{matsumura2010}
{\sc Matsumura, A., and Wang, Y.}
\newblock Asymptotic stability of viscous shock wave for a onedimensional
  isentropic model of viscous gas with density dependent viscosity.
\newblock {\em Methods and Applications of Analysis 17}, 3 (2010), 279--290.

\bibitem{mellet2008}
{\sc Mellet, A., and Vasseur, A.}
\newblock Existence and uniqueness of global strong solutions for
  one-dimensional compressible navier--stokes equations.
\newblock {\em SIAM Journal on Mathematical Analysis 39}, 4 (2008), 1344--1365.

\bibitem{perrin2015}
{\sc Perrin, C., and Zatorska, E.}
\newblock Free/congested two-phase model from weak solutions to
  multi-dimensional compressible {N}avier-{S}tokes equations.
\newblock {\em Communications in Partial Differential Equations 40}, 8 (2015),
  1558--1589.

\bibitem{serre1999}
{\sc Serre, D.}
\newblock {\em Systems of Conservation Laws 1: Hyperbolicity, entropies, shock
  waves}.
\newblock Cambridge University Press, 1999.

\bibitem{shelukhin1984}
{\sc Shelukhin, V.}
\newblock On the structure of generalized solutions of the one-dimensional
  equations of a polytropic viscous gas.
\newblock {\em Journal of Applied Mathematics and Mechanics 48}, 6 (1984),
  665--672.

\bibitem{shibata2016}
{\sc Shibata, Y.}
\newblock On the $\mathcal{R}$-boundedness for the two phase problem with phase
  transition: Compressible-incompressible model problem.
\newblock {\em Funkcialaj Ekvacioj 59}, 2 (2016), 243--287.

\bibitem{vasseur2016}
{\sc Vasseur, A.~F., and Yao, L.}
\newblock Nonlinear stability of viscous shock wave to one-dimensional
  compressible isentropic {N}avier-{S}tokes equations with density dependent
  viscous coefficient.
\newblock {\em Commun. Math. Sci 14}, 8 (2016), 2215--2228.

\end{thebibliography}

          \end{document}